\setlist[enumerate]{label=\roman*),itemsep=0pt}
\newcolumntype{P}[1]{>{\centering\arraybackslash$}p{#1}<{$}}
\newtheorem{thm}{Theorem}
\newtheorem{lem}[thm]{Lemma}
\newtheorem{conj}[thm]{Conjecture}
\newtheorem{obs}[thm]{Observation}
\newtheorem{prop}[thm]{Proposition}
\theoremstyle{definition}
\newtheorem{df}[thm]{Definition}
\theoremstyle{remark}
\newtheorem{ex}[thm]{Example}
\newtheorem{rem}[thm]{Remark}
\newcommand{\bu}{\ensuremath{\boldsymbol{u}}}
\newcommand{\N}{\mathbb{N}}
\newcommand{\Lc}{\mathcal{L}}
\newcommand{\Pc}{\mathcal{P}}
\newcommand{\Ec}{\mathcal {E}\text{-}\Pc}
\newcommand{\Aj}{\mathcal{A}_1}
\newcommand{\Ad}{\mathcal{A}_2}
\newcommand{\Er}{\mathrm{E}}
\newcommand{\Rr}{\mathrm{R}}
\newcommand{\pr}{\mathfrak{p}}
\newcommand{\su}{\mathfrak{s}}
\newcommand{\fa}{\mathfrak{w}}
\begin{document}

 \title{Morphisms generating antipalindromic words}

 \author{Petr Ambro\v{z}, Zuzana Mas\'akov\'a, Edita Pelantov\'a \\[3mm] 
    \small Faculty of Nuclear Sciences and Physical Engineering \\
    \small Czech Technical University in Prague\\ 
    \small Trojanova 13, 120 00 Praha 2, Czech Republic}

 \date{}
 
  \maketitle


 \begin{abstract}
 We introduce two classes of morphisms over the alphabet $A=\{0,1\}$ whose fixed points contain
 infinitely many antipalindromic factors. An antipalindrome is a finite word invariant under the
 action of the antimorphism $\Er:\{0,1\}^*\to\{0,1\}^*$, defined by $\Er(w_1\cdots
 w_n)=(1-w_{n})\cdots(1-w_1)$. We conjecture that these two classes contain all morphisms (up to
 conjugation) which generate infinite words with infinitely many antipalindromes. This is an
 analogue to the famous HKS conjecture concerning infinite words containing infinitely many
 palindromes. We prove our conjecture for two special classes of morphisms, namely (i) uniform
 morphisms and (ii) morphisms with fixed points containing also infinitely many palindromes. \\
 
 \medskip
 
 \noindent \textit{Keywords}:    palindromes; antipalindromes; uniform morphisms; class $\Pc$.
\end{abstract}


 \section{Introduction}

Palindromic words are infinite words over a finite alphabet $A$ which contain arbitrarily long
palindromes. Recall that a palindrome is a finite word $w$ which is read the same backwards and
forwards, i.e., $w=w_1w_2\cdots w_n=w_nw_{n-1}\cdots w_1$. Palindromic words have been extensively
studied since the observation of Hof, Knill and Simon~\cite{hof-knill-simon-cmp-174} that they can
be used for construction of aperiodic potentials of discrete Schr\"odinger operators with purely
singular continuous spectrum. Such Schr\"odinger operators seem to describe well the behaviour of
one-dimensional structures known under the name quasicrystals.

A large class of palindromic words is the family of Sturmian words defined as infinite aperiodic
words with minimal complexity.  One of Sturmian words is the Fibonacci word
$\boldsymbol{f}=010010100100101001\cdots$. The word $\boldsymbol{f}$ can be constructed by iterating
the rewriting rule $0\mapsto 01$, $1\mapsto0$, i.e.,
$$
0\mapsto01\mapsto010\mapsto01001\mapsto01001010\mapsto\cdots
$$
Note that the word in $i$-th iteration is a prefix of the word in iteration $i+1$ and the
infinite word $\boldsymbol{f}$ is defined naturally. The construction can be formalized using the
notion of a homomorphism over the free monoid $A^*$ of all words over a finite alphabet (equipped
with the operation of concatenation and the empty word as the neutral element).  In the context of
combinatorics on words, the homomorphisms are called just morphisms. The Fibonacci word
$\boldsymbol{f}$ is thus an example of an infinite word fixed by the morphism
$\varphi:\{0,1\}^*\to\{0,1\}^*$, defined by $\varphi(0)=01$, $\varphi(1)=0$.

The authors of~\cite{hof-knill-simon-cmp-174} conjecture that any palindromic fixed point of a
morphism can be generated by a morphism conjugated to an element of the so-called class $\Pc$ -- a
family of morphisms in a special form, namely that $\varphi(a)=p_aq$ where $q$ and $p_a$ for $a\in
A$ are palindromes.  This -- the so-called HKS conjecture -- has been proven for binary words by Tan
in~\cite{tan-tcs-389}. Partial results for infinite words over larger alphabets have been also
given. For example, the HKS conjecture is proved in~\cite{allouche-baake-cassaigne-damanik-tcs-292}
for periodic words, in \cite{labbe-pelantova-ejc-51} for fixed points of marked morphisms, and in
\cite{masakova-pelantova-starosta-ejc-62} for words coding non-degenerated exchange of three
intervals. In~\cite{harju-vesti-zamboni-monatshefte-2016}, a modified version of HKS conjecture has
been proven for rich words.

The definition of a palindrome can be formulated using the notion of an antimorphism over the monoid
$A^*$. A mapping $\eta:A^*\to A^*$ is an antimorphism if $\eta(vw)=\eta(w)\eta(v)$ for any pair of
words $v,w\in A^*$. A palindrome is a finite word invariant under the mirror image antimorphism
$\Rr$. Words invariant under other involutive antimorphisms are called generalized palindromes or
pseudopalindromes. In the particular case of a binary alphabet, the only involutive antimorphism
other than $\Rr$ is the exchange map $\Er$. The words $w$ such that $\Er(w)=w$ are called
antipalindromes. For example, the shortest nonempty antipalindrome is $01$, as $E(01)=E(1)E(0)=01$.
An infinite word containing infinitely many such antipalindromes is called antipalindromic.  The
well known Thue-Morse word $\boldsymbol{t}=0110100110010110\cdots$, both fixed points of the
morphism $\Theta:\ 0\mapsto 01,\ 1\mapsto10$, can serve as an example.  A large class of
antipalindromic words is given by complementary symmetric Rote words,
see~\cite{blondin-masse-brlek-labbe-vuillon-tcs-412}. These words are, however, not fixed by any
non-identical morphism, see~\cite{medkova-pelantova-vuillon-preprint}.

Our aim is to study a modification of the HKS conjecture to the case of antipalindromes. We define
two classes of morphisms $\Aj$, $\Ad$ such that any fixed point of a morphism in any of these
classes is antipalindromic. We conjecture that classes $\Aj$, $\Ad$ contain (up to conjugacy) all
primitive morphisms with antipalindromic fixed points.  The conjecture is supported by our results
formulated as Theorem~\ref{thm:mainUnifrom} and Theorem~\ref{thm:mainNonuniform} which state that
\begin{itemize}
\item
  if a binary uniform morphism $\varphi$ (i.e., such that the words $\varphi(0)$, $\varphi(1)$ are
  of the same length) has an antipalindromic fixed point, then $\varphi$ or $\varphi^2$ is (up to
  conjugacy) equal to a morphism in class $\Aj$;
\item
  if a morphism $\varphi$ is primitive and its fixed point contains infinitely many of both
  palindromes and antipalindromes, then $\varphi$ or $\varphi^2$ belongs to $\Aj\cap\Pc$ or
  $\Ad\cap\Pc$.
\end{itemize}

The situation can be summarized in a diagram displayed in Figure~\ref{f} which shows intersection of
the above mentioned classes $\Pc$, $\Aj$, $\Ad$.
\begin{figure}[!htp]
  \centering
  \includegraphics[scale=1]{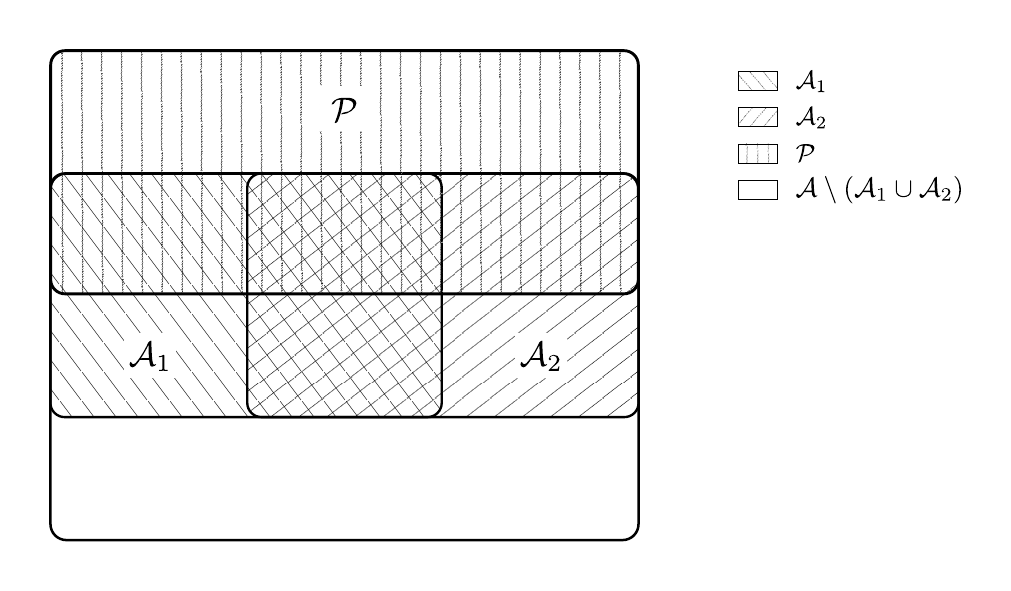}
  \caption{Relation of classes $\Aj$, $\Ad$ and $\Pc$. Denoting by ${\mathcal A}$ the set of all
    primitive morphisms with antipalindromic fixed points, we conjecture that the set ${\mathcal
      A}\setminus(\Aj\cup\Ad)$ is empty.}\label{f}
\end{figure}

This paper is organized as follows. In Section~\ref{sec:prelim} we recall a number of necessary
notions and tools from combinatorics on words. In particular, we recall several results on equations
on words and on the structure of bispecial factors in languages of morphic words.  In
Section~\ref{sec:Type12} we define classes $\Aj$, $\Ad$ and show that their fixed points are
antipalindromic. We also characterize morphisms in classes $\Pc\cap\Aj$ and $\Pc\cap\Ad$.
Section~\ref{sec:periodic} describes eventually periodic words which contain infinitely many
antipalindromes and those which contain infinitely many of both palindromes and
antipalindromes. Properties of languages of infinite antipalindromic aperiodic words are studied in
Section~\ref{sec:language}. The main results (Theorems~\ref{thm:mainUnifrom}
and~\ref{thm:mainNonuniform}) are stated in Section~\ref{sec:main}. Theorem~\ref{thm:mainUnifrom}
for uniform mosphism is also proved there. The proof of Theorem~\ref{thm:mainNonuniform} which
concerns non-uniform morphisms is very technical and requires a number of auxiliary results. They
are collected in Section~\ref{sec:proof}. Finally, we include comments and open problems.


\section{Preliminaries}\label{sec:prelim}

\subsection{Finite Words}\label{sec:fin_words}

Let $A$ be a finite set called \emph{alphabet}, its elements are called \emph{letters}. A finite
sequence $w=w_1\cdots w_n$ such that $w_i\in A$ for every $i=1,2,\ldots,n$ is called a \emph{word} over
$A$. The \emph{length} (the number of elements) of $w=w_1\cdots w_n$ is denoted by $|w|=n$. The
notation $|w|_a$ is used for the number of occurrences of the letter $a$ in $w$. The word of length
zero -- the so called \emph{empty word} -- is denoted by $\varepsilon$. The set of all finite words
over $A$ equipped with the operation concatenation of words forms the free monoid $A^*$, its neutral
element is $\varepsilon$.

Let $w=pus$ for some $p,u,s\in A^*$. Then $u$, $p$, and $s$ are called a \emph{factor}, a
\emph{prefix}, and a \emph{suffix} of $w$, respectively. Let $w\in A^*$ and $r\in\N$ then $w^r$
denotes the \emph{$r$-th power} 
of $w$, i.e., $w^r=\underbrace{ww\cdots w}_{\text{$r$-times}}$. A word $v\in A^*$ is called
\emph{primitive} if for each $w\in A^*$ and $r\in\N$ the equality $v=w^r$ implies that $w=v$ and
$r=1$.

A mapping $\varphi:A^*\rightarrow A^*$ is called a \emph{morphism} (over $A$) if
$\varphi(uv)=\varphi(u)\varphi(v)$ for every $u,v\in A^*$; it is called an \emph{antimorphism} if
$\varphi(uv)=\varphi(v)\varphi(u)$ for every $u,v\in A^*$. Obviously, both morphism and antimorphism
are fully defined by providing $\varphi(a)$ for all $a\in A$.

A morphism $\varphi:A^*\to A^*$ is said to be \emph{primitive} if there exists $k\in\N$ such that for
every pair $a,b\in A$ the letter $a$ occurs in the word $\varphi^k(b)$. A morphism $\varphi:A^*\to A^*$
is called \emph{uniform} if for  every pair $a,b\in A$ we have $|\varphi(a)|=|\varphi(b)|$.

Throughout the paper, we use two important antimorphisms $\Rr$ and $\Er$. The antimorphism
$\Rr:A^*\rightarrow A^*$, called \emph{mirror image map}, is defined as $\Rr(a)=a$ for every $a\in
A$, that is, $\Rr(w_1\cdots w_n)=w_n\cdots w_1$. The antimorphism
$\Er:\{0,1\}^*\rightarrow\{0,1\}^*$ is the \emph{exchange map} defined by $\Er(0)=1$ and $\Er(1)=0$.
Note that both antimorphisms are involutions, i.e., $\Rr^2=\Er^2={\rm id}$.  Let $w$ be a word, if
$\Rr(w)=w$ then $w$ is said to be a \emph{palindrome}, if $\Er(w)=w$ then $w$ is called
\emph{antipalindrome}. Note that an antipalindrome necessarily has even length. The only word which
is both palindrome and antipalindrome is the empty word $\varepsilon$.

One can easily check the following properties of antimorphisms $\Rr$, $\Er$ and Thue-Morse morphism
$\Theta$ given by $\Theta(0)=01$ and $\Theta(1)=10$.

\begin{obs}\label{obs}
  Let $w\in\{0,1\}^*$. Then
  \begin{enumerate}
  \item
    w is a palindrome $\Leftrightarrow$ $\Er(w)$ is a palindrome,
  \item
    w is an antipalindrome $\Leftrightarrow$ $\Er(w)$ is an antipalindrome,
  \item
    $\Theta\Rr=\Er\Theta$,
  \item
    $\Theta(w)$ is a palindrome $\Leftrightarrow$ $w$ is an antipalindrome,
  \item
    $\Theta(w)$ is an antipalindrome $\Leftrightarrow$ $w$ is a palindrome.
  \end{enumerate}
\end{obs}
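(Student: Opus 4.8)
The plan is to verify each of the five equivalences directly from the definitions of $\Rr$, $\Er$, and $\Theta$, treating the claims as short computations rather than deep results; items (i)--(iii) are the core, and (iv)--(v) follow formally from (iii) together with the involutivity of the antimorphisms.

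\medskip

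First I would prove (i). Write $w = w_1 \cdots w_n$. Since $\Er$ is an antimorphism with $\Er(a) = 1-a$, we have $\Er(w) = (1-w_n)(1-w_{n-1}) \cdots (1-w_1)$. If $w$ is a palindrome, then $w_i = w_{n+1-i}$ for all $i$, so reading $\Er(w)$ from the front, its $i$-th letter is $1-w_{n+1-i} = 1-w_i$, and its $(n+1-i)$-th letter is $1-w_i = 1-w_{n+1-i}$; hence $\Er(w)$ is symmetric, i.e.\ a palindrome. The converse is identical since $\Er$ is an involution (apply the forward direction to $\Er(w)$). The same letter-by-letter bookkeeping gives (ii): if $\Er(w) = w$, then applying $\Er$ again and using $\Er^2 = \mathrm{id}$ shows nothing new, so instead I argue that $w$ is an antipalindrome iff $w_{n+1-i} = 1 - w_i$ for all $i$; this condition is visibly preserved when we replace each $w_i$ by $1-w_i$, which is exactly passing from $w$ to $\Er(w)$ (note $\Er(w)$ here means the pointwise complement read in the same order only after also reversing — so one must be slightly careful and instead observe that $\Er(w)$ is an antipalindrome iff $\Er\Er(w) = \Er(w)$ iff $w = \Er(w)$... ). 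Cleaner: since $\Er$ is an involutive antimorphism, $w$ is an antipalindrome ($\Er(w)=w$) iff $\Er(\Er(w)) = \Er(w)$, which says $\Er(w)$ is an antipalindrome; this is immediate and makes (ii) a one-line consequence of $\Er^2 = \mathrm{id}$.

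\medskip

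Next I would establish (iii), $\Theta\Rr = \Er\Theta$. Both sides are antimorphisms (composition of an antimorphism with a morphism), so it suffices to check equality on the letters $0$ and $1$. Compute $\Theta\Rr(0) = \Theta(0) = 01$ and $\Er\Theta(0) = \Er(01) = \Er(1)\Er(0) = 01$; similarly $\Theta\Rr(1) = \Theta(1) = 10$ and $\Er\Theta(1) = \Er(10) = \Er(0)\Er(1) = 10$. So the two antimorphisms agree on generators, hence everywhere. Then (iv) follows: $\Theta(w)$ is a palindrome iff $\Rr\Theta(w) = \Theta(w)$; since $\Theta$ is injective (its images $01,10$ form a prefix code, or directly because $|\Theta(u)|=2|u|$ and a simple argument), it is enough to show $\Rr\Theta = \Theta\Er$ applied to $w$ gives $\Theta(w)$ iff $\Er(w)=w$. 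From (iii), $\Rr\Theta = \Theta\Er$? We have $\Theta\Rr = \Er\Theta$; composing both sides suitably with involutions, apply $\Er$ on the left: $\Er\Theta\Rr = \Theta$, hmm — better to just note $\Rr\Theta(w)=\Theta(w)$ iff (applying $\Theta$-injectivity after writing $\Rr\Theta = \Er^{-1}\Er\Rr\Theta$...). I will in the writeup simply derive $\Rr\Theta = \Theta\Er$ from (iii) by pre- and post-composing with the involutions $\Rr,\Er$ in the right order, then conclude (iv) and (v) by injectivity of $\Theta$.

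\medskip

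The only mildly delicate point — the one I expect to be the ``obstacle'' such as it is — is getting the left/right placement of the antimorphisms correct when juggling (iii) to produce the companion identity $\Rr\Theta = \Theta\Er$, and invoking injectivity of $\Theta$ cleanly to pass from ``$\Theta(w)$ is a (anti)palindrome'' to a condition on $w$. Injectivity holds because $\Theta(u)=\Theta(v)$ with $|u|\le|v|$ forces, by comparing blocks of length $2$ from the left, $u=v$. Everything else is a mechanical check on the two-letter alphabet. No step requires any result beyond the definitions just recalled, so the proof is entirely self-contained and short.
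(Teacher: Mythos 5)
Your proof is correct, and it is exactly the elementary letter-by-letter verification the paper has in mind (the paper gives no proof at all, stating only that the properties are ``easily checked''). One small point worth fixing in the writeup: the companion identity $\Rr\Theta=\Theta\Er$ is \emph{not} a purely formal consequence of (iii) by pre/post-composing with the involutions (the manipulations you sketch just reproduce (iii)); it needs either the extra observation that the letter-complement morphism $\Er\Rr$ commutes with $\Theta$, or simply a second direct check on the letters ($\Rr\Theta(0)=10=\Theta\Er(0)$, $\Rr\Theta(1)=01=\Theta\Er(1)$), which is the cleanest route and fits your stated fallback. With that, (iv) and (v) follow from injectivity of $\Theta$ exactly as you describe.
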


In our considerations we will repeatedly use several known results on the solutions of equations on
words. The statements of these results are summarized in the following proposition, the proofs can be
found in~\cite{lothaire1,brlek-hamel-nivat-reutenauer-ijfcs-15,pelantova-statosta-dmtcs-18}.

\begin{prop}[\cite{lothaire1,brlek-hamel-nivat-reutenauer-ijfcs-15,pelantova-statosta-dmtcs-18}]
  \label{p:properties}
  Let $x,y,z\in A^*$.
  \begin{enumerate}
  \item
    If $xy=yx$, then there exist $u\in A^*$ and $i,j\in\N$ such that $x=u^i$ and $y=u^j$.
  \item
    If $xy=yz$ and $x\neq\varepsilon$, then there exist $u,v\in A^*$ and $i\in\N$ such that $x=uv$,
    $y=(uv)^iu$, and $z=vu$.
  \item
    If $x,y\in\{0,1\}^*$ are nonempty palindromes and $xy$ is an antipalindrome, then there exist a
    palindrome $u\in\{0,1\}^*$ and $i,j\in\N$ such that $x=\big(u\Er(u)\big)^iu$ and
    $y=\big(\Er(u)u\big)^j\Er(u)$.
  \end{enumerate}
\end{prop}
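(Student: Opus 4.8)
The plan is to prove the three statements of Proposition~\ref{p:properties} separately, noting that items~(i) and~(ii) are classical (the Lyndon--Schützenberger equations, cf.~\cite{lothaire1}), so the real work is item~(iii), which concerns the interaction between palindromes and the exchange map. I would treat (i) and (ii) in a sentence each: for (i), $xy=yx$ forces $x$ and $y$ to be powers of a common primitive root by the fundamental periodicity lemma; for (ii), $xy=yz$ with $x\neq\varepsilon$ says $y$ has $x$ as a prefix and $z$ as a suffix, and unwinding the overlap gives the stated Fine--Wilf--type decomposition $x=uv$, $y=(uv)^iu$, $z=vu$. These are quoted verbatim from the references and need no new argument.

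For item~(iii), let $x,y$ be nonempty palindromes with $xy$ an antipalindrome, i.e. $\Er(xy)=xy$. Since $\Er$ is an antimorphism, $\Er(xy)=\Er(y)\Er(x)$, so the hypothesis reads $\Er(y)\Er(x)=xy$. Now $\Er(x)$ and $\Er(y)$ are again palindromes by Observation~\ref{obs}(i). Comparing lengths, $|\Er(y)|=|y|$ and $|\Er(x)|=|x|$, so the equality $\Er(y)\Er(x)=xy$ splits at the boundary of length $|x|$ only if $|x|=|y|$; in general one of $\Er(y)$, $x$ is a prefix of the other. The plan is to apply item~(ii) to the equation $\Er(y)\Er(x) = x y$ — viewed as $x\cdot y = \Er(y)\cdot\Er(x)$ this is of the form $pq=rs$ with $|p|+|q|=|r|+|s|$ — to get a common refinement: there is a word $u$ and a word $t$ with $x=ut$ or $\Er(y)=ux$ (according to which is longer), and then to use the palindrome and antipalindrome constraints to force $t=\Er(u)$ and to iterate.

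Concretely, I would argue as follows. Assume without loss of generality $|x|\le|y|$ (the other case is symmetric under applying $\Er$ and reversing roles, using that $\Er(x)$, $\Er(y)$ are palindromes and $\Er(y)\Er(x)$ is an antipalindrome by Observation~\ref{obs}(ii)). Then $x$ is a prefix of $\Er(y)$, so write $\Er(y)=xw$ for some $w$; since $\Er(y)$ is a palindrome and $x$ is a palindrome, $w=\Rr(x)^{-1}\cdot$(something) — more cleanly, $\Er(y)=xw$ palindrome means $\Rr(w)\Rr(x)=\Rr(w)x=\Er(y)=xw$, so $x w = \Rr(w) x$, and by item~(ii) applied to $xw=\Rr(w)x$ we get $x=\alpha\beta$, $w=(\beta\alpha)^k\beta$ shapes. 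Simultaneously the original equation $\Er(y)\Er(x)=xy$ becomes $xw\Er(x)=xy$, i.e. $y=w\Er(x)$, and since $y$ is a palindrome, $\Rr(y)=\Er(\Rr(x))\Rr(w)=\Er(x)\Rr(w)$ wait — here I must be careful: $\Rr$ and $\Er$ commute on the letter level since $\Rr$ fixes letters, so $\Rr\Er=\Er\Rr$, hence $\Rr(\Er(x))=\Er(\Rr(x))=\Er(x)$ because $x$ is a palindrome. So $y=w\Er(x)$ palindrome gives $\Er(x)\Rr(w)=w\Er(x)$. Combining the two palindromicity equations for $w$ with the relation linking $w$ to $x$ and $\Er(x)$ should, by a short induction on $|x|+|y|$, collapse everything to $x=(u\Er(u))^i u$ and $y=(\Er(u)u)^j\Er(u)$ for a palindrome $u$; the base case is $x=u$ a single palindrome block with $y=\Er(u)$, which indeed satisfies $xy=u\Er(u)$, an antipalindrome. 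I expect the bookkeeping of which word is longer and keeping track of the parity/role swap between $u$ and $\Er(u)$ to be the main obstacle — the structural content is just ``palindrome + palindrome glued into an antipalindrome must be a run of alternating mirror blocks'' — but making the induction hypothesis strong enough to close (carrying the explicit exponents $i,j$ and the fact that $u$ is a palindrome through each step) is the delicate point, and I would set it up by inducting on $\min(|x|,|y|)$ and peeling off one $u\Er(u)$ (or $\Er(u)u$) pair at a time.
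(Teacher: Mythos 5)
The paper gives no proof of Proposition~\ref{p:properties}: all three items are quoted from the cited references, so there is no internal argument to compare yours against. Treating (i) and (ii) as classical citations is exactly what the paper does. The only place where you attempt actual mathematics is item (iii), and there your argument has a genuine gap: it is a plan for an induction, not an induction. Your opening moves are sound — from $\Er(y)\Er(x)=xy$ and $|x|\le|y|$ you get $\Er(y)=xw$, and palindromicity of $\Er(y)$ gives $xw=\Rr(w)x$, while substitution gives $y=w\Er(x)$ — but you never identify the \emph{smaller instance of statement (iii)} on which the induction is supposed to recurse, and you concede as much (``should \dots collapse'', ``the delicate point''). The missing idea is that $w$ is itself an antipalindrome: from $\Er(y)=xw$ one also has $y=\Er(w)\Er(x)$, which compared with $y=w\Er(x)$ forces $w=\Er(w)$. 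That is what makes the recursion available. If $w=\varepsilon$, take $u=x$, $i=j=0$. If $|w|\ge|x|$, the equation $xw=\Rr(w)x$ forces $w=tx$ with $t$ a palindrome (and $t\neq\varepsilon$, else $w=x$ would be both a palindrome and a nonempty antipalindrome), so $w=tx$ is a strictly shorter antipalindrome factored into two nonempty palindromes; the induction hypothesis applied to $(t,x)$ then yields the claimed form after pushing the answer through $y=w\Er(x)$. The residual case $0<|w|<|x|$ is where item (ii) genuinely enters: it gives $\Rr(w)=uv$, $x=(uv)^iu$, $w=vu$ with $i\ge1$; palindromicity of $x$ forces $u,v$ to be palindromes, $w=\Er(w)$ rules out $u=\varepsilon$ or $v=\varepsilon$, and one recurses on the pair $(v,u)$. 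None of this case analysis, nor the translation of the inductive answer back into the exponents $i,j$, appears in your write-up.

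There is also a concrete misstep in the one place you do invoke item (ii): the equation $xw=\Rr(w)x$ matches the pattern $x'y'=y'z'$ with $x'=\Rr(w)$, $y'=x$, $z'=w$, so the conclusion is $\Rr(w)=uv$, $x=(uv)^iu$, $w=vu$ --- not ``$x=\alpha\beta$, $w=(\beta\alpha)^k\beta$'' as you wrote, which swaps the roles of the overlapping word and the outer words. As it stands, item (iii) is not proved.
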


Finally, let us recall the Fine-Wilf theorem.

\begin{thm}[\cite{lothaire1}]\label{thm:fine-wilf}
  Let $x,y\in A^*$. If $w$ is a prefix of both $x^r$ and $y^r$ for some $r\in\N$ and if
  $|w|\geq|x|+|y|-\gcd\{|x|,|y|\}$, then there is $z\in A^*$ such that $x=z^i$ and $y=z^j$ for some
  $i,j\in\N$.
\end{thm}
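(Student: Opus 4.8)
Since this is the Fine--Wilf periodicity theorem, the plan is to reproduce the classical Euclidean-descent proof. Write $p=|x|$, $q=|y|$ and $d=\gcd\{p,q\}$; assume $x,y\neq\varepsilon$, and observe that, as $x^r$ is a prefix of $x^{r+1}$, we may fix one $r$ large enough that $w$ is a prefix of both $x^r$ and $y^r$. Since $|w|\ge p+q-d\ge\max\{p,q\}$, the word $x$ is the length-$p$ prefix of $w$, the word $y$ is the length-$q$ prefix of $w$, and $w$ has both $p$ and $q$ as \emph{periods}, i.e.\ $w_i=w_{i+p}$ whenever $1\le i\le|w|-p$, and likewise for $q$. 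It therefore suffices to prove the reformulation: \emph{a word $w$ with $|w|\ge p+q-\gcd\{p,q\}$ having periods $p$ and $q$ also has period $d=\gcd\{p,q\}$}. Granting this, taking $z$ to be the length-$d$ prefix of $w$ gives $x=z^{p/d}$ and $y=z^{q/d}$ because $d\mid p$, $d\mid q$ and $w$ has period $d$; this is the claimed conclusion with $i=p/d$ and $j=q/d$ in $\N$.

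To prove the reformulation I would induct on $p+q$, assuming without loss of generality $p\le q$. The base case $p=q$ is trivial: then $d=p$ and $w$ already has period $p$. For the step assume $p<q$; the Euclidean bookkeeping is $d\mid(q-p)$, $q-p\ge d>0$, and $\gcd\{p,q-p\}=\gcd\{p,q\}=d$. I would then pass to the prefix $w'$ of $w$ of length $|w|-p$ and verify two routine points: first, that $w'$ has periods $p$ and $q-p$ --- for $1\le i\le|w|-q$ one has $w_i=w_{i+q}=w_{i+q-p}$ by chaining the period $q$ with the period $p$, and the period $p$ of $w$ restricts to $w'$; second, that $|w'|=|w|-p\ge q-d=p+(q-p)-\gcd\{p,q-p\}$. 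Applying the induction hypothesis to $w'$ with the periods $p$ and $q-p$ then gives that $w'$ has period $d$.

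The step I expect to require the most care is lifting the period $d$ from the prefix $w'$ back to all of $w$, and this is also the only place where the bound $|w|\ge p+q-\gcd\{p,q\}$ is genuinely used. Here one notes that $|w|-p\ge q-d$ by the length hypothesis while $q-d\ge p$ because $q-p$ is a positive multiple of $d$, so any index $i$ of $w$ with $i>|w|-p$ satisfies $p+1\le i\le|w|$, hence $1\le i-p\le|w|-p$; replacing such an $i$ by $i-p$ changes neither its residue modulo $d$ (as $d\mid p$) nor the letter of $w$ at that position (as $w$ has period $p$). Thus every position of $w$ is sent into $\{1,\dots,|w|-p\}$, keeping its residue class and its letter; since $w'$ occupies exactly those positions and has period $d$, any two of them congruent modulo $d$ carry the same letter, and chaining through the reductions yields $w_i=w_{i+d}$ for all admissible $i$. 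This closes the induction. (Equivalently, one could argue on the graph with vertices $\{1,\dots,|w|\}$ and edges joining indices at distance $p$ or $q$, and identify its connected components with the residue classes modulo $d$; the effort is the same.)
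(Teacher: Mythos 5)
The paper does not prove this statement at all: it is imported verbatim from Lothaire \cite{lothaire1} as a known tool, so there is no in-paper proof to compare against. Your argument is the standard Euclidean-descent proof of the Fine--Wilf theorem and it is correct: the reduction to the periodicity formulation, the passage to the prefix $w'$ of length $|w|-p$ with periods $p$ and $q-p$, the length check $|w|-p\ge q-d$, and the lifting of the period $d$ back to $w$ via the inequality $q-d\ge p$ all go through as you describe. The only (harmless) loose end is that you assume $x,y\neq\varepsilon$ without disposing of the degenerate case, which is immediate since $w=\varepsilon$ there and one may take $z$ to be the nonempty one of $x,y$ (or $z=\varepsilon$ if both are empty).
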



\subsection{Infinite words}

An \emph{infinite word} over an alphabet $A$ is an infinite sequence $\bu=u_0u_1u_2\cdots$ of
letters from $A$ (i.e., $u_i\in A$ for every $i\in\N$). The set of all infinite words over $A$ is
denoted $A^{\N}$. A finite word $w\in A^*$ of length $|w|=n$ is called a \emph{factor} of $\bu$ if
there is an index $i\in\N$ such that $w=u_iu_{i+1}\cdots u_{i+n-1}$. The index $i$ is called an
\emph{occurrence} of $w$ in $\bu$. The set of all factors of $\bu$ is called the \emph{language} of
$\bu$, denoted $\Lc(\bu)$.  We say that $\Lc(\bu)$ is closed under $\Rr$, if $w\in\Lc(\bu)$ implies
$\Rr(w)\in\Lc(\bu)$. Analogously, $\Lc(\bu)$ is closed under $\Er$ if $w\in\Lc(\bu)$ gives
$\Er(w)\in\Lc(\bu)$.

An infinite word $\bu$ is called \emph{eventually periodic} if there exist $v,w\in A^*$,
$w\neq\varepsilon$ such that $\bu=vw^{\infty}$, where $w^{\infty}$ denotes an infinite repetition of
$w$. If, moreover, $v=\varepsilon$, then $\bu$ is \emph{purely periodic}. An infinite word which is
not eventually periodic is called \emph{aperiodic}. An infinite word $\bu$ is called
\emph{recurrent} if each factor $w\in\Lc(\bu)$ has an infinite number of occurrences in $\bu$. If
there is a number $r$ for all $n\in\N$ such that each factor of $\bu$ of length $n$ has at least one
occurrence in the set $\{k,k+1,\ldots,k+r-n\}$ for every $k\in\N$, then $\bu$ is called
\emph{uniformly recurrent} and the mapping $n\mapsto r(n)$, where $r(n)$ is the minimal $r$ with the
above property, is called the \emph{recurrence function} of $\bu$.

The domain of a morphism $\varphi:A^*\to A^*$ can be naturally extended to infinite words by
$\varphi(\bu)=\varphi(u_0u_1u_2\cdots)=\varphi(u_0)\varphi(u_1)\varphi(u_2)\cdots$. If
$\varphi(\bu)=\bu$, then $\bu$ is called a \emph{fixed point} of $\varphi$.

A morphism $\varphi$ is called a \emph{substitution} if it has the following property: there are
$a\in A$ and $w\in A^*$, $w\neq\varepsilon$ such that $\varphi(a)=aw$ and $|\varphi^n(a)|$ tends to
infinity with growing $n$. Obviously, a substitution $\varphi$ has at least one fixed point, namely
$\bu=aw\varphi(w)\varphi^2(w)\cdots$.

Let $\varphi,\psi$ be morphisms over $A$. We say that $\varphi$ is \emph{left conjugate} to $\psi$
(or equivalently that $\psi$ is \emph{right conjugate} to $\varphi$) if there is $q\in A^*$ such
that $q\varphi(w)=\psi(w)q$ for every $w\in A^*$. In such a case the word $q$ is called the
\emph{conjugacy word}. If, moreover, $\varphi$ is the only left conjugate to itself, we say that
$\varphi$ is the \emph{leftmost conjugate} to $\psi$, denoted by $\psi_L$. The \emph{rightmost
  conjugate} to a morphism $\psi$, denoted by $\psi_R$, is defined analogously.

\begin{ex}
  Let $\psi:\{a,b\}^*\to\{a,b\}^*$ be defined by $\psi(a)=abaab$ and $\psi(b)=ab$. Then
  $\psi_L(a)=ababa$, $\psi_L(b)=ba$, $\psi_R(a)=ababa$, and $\psi_R(b)=ab$. Clearly $\psi_L$ and
  $\psi_R$ are also conjugate morphisms and their conjugacy word is equal to $q=ababa$.
\end{ex}

If a morphism $\psi$ is a conjugate to itself via a nonempty conjugacy word $q$ then $\psi$ is
called \emph{cyclic morphism} and it has a unique fixed point, namely $q^{\infty}$. Otherwise,
$\psi$ is called \emph{acyclic}. Any acyclic morphism has a leftmost and a rightmost conjugate.

Let $\mathrm{fst}(w)$ and $\mathrm{lst}(w)$ denote the first and the last letter of $w$, respectively.
Let $\psi$ be an acyclic morphism over a binary alphabet then obviously
\begin{equation}\label{eq:def_marked}
  \begin{split}
    & \mathrm{fst}(\psi_L(a))\neq\mathrm{fst}(\psi_L(b)) \qquad \text{if } a\neq b, \\
    & \mathrm{lst}(\psi_R(a))\neq\mathrm{lst}(\psi_R(b)) \qquad \text{if } a\neq b.
  \end{split}
\end{equation}
A morphism over an arbitrary alphabet $A$ satisfying~(\ref{eq:def_marked}) for all $a,b\in A$,
$a\neq b$ is called \emph{marked}. Thus a binary acyclic morphism is marked.

The following proposition summarizes several important properties of fixed points of primitive
morphisms, for proofs see~\cite{fogg,queffelec-lncs-1294}.

\begin{prop}
  Let $\varphi:A^*\to A^*$ be a primitive morphism and let $\bu$ be its fixed point. Then
  \begin{enumerate}
  \item
    $\bu$ is uniformly recurrent;
  \item
    if $\psi$ is a conjugate to $\varphi$ then $\psi$ is primitive;
  \item
    if $\psi$ is a conjugate to $\varphi$ and $\boldsymbol{v}$ is a fixed point of $\psi$ then
    $\Lc(\boldsymbol{v})=\Lc(\bu)$;
  \item
    for each $a\in A$ the following limit, called uniform frequency of the letter $a$ in $\bu$, exists
    \[
    \rho_a := \lim_{\substack{|w|\to\infty\\w\in\Lc(\bu)}}\frac{|w|_a}{|w|}.
    \]
  \end{enumerate}
\end{prop}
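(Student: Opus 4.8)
These are classical facts about primitive substitutions, and the plan is to deduce all four from the incidence (Parikh) matrix $M_\varphi$ defined by $(M_\varphi)_{ab}=|\varphi(a)|_b$, which satisfies $(M_\varphi^n)_{ab}=|\varphi^n(a)|_b$. By the definition of primitivity, $\varphi$ is primitive if and only if $M_\varphi$ is a primitive nonnegative matrix, so Perron--Frobenius applies: $M_\varphi$ has a simple dominant eigenvalue $\lambda>1$ with strictly positive left and right eigenvectors, and $|\varphi^n(a)|\to\infty$ for every $a\in A$. Statement (ii) is then immediate: applying a one-step conjugacy relation $q\varphi(w)=\psi(w)q$ to a letter $a$ and counting occurrences of each letter $b$ gives $|\varphi(a)|_b=|\psi(a)|_b$, hence $M_\psi=M_\varphi$; propagating this along the conjugacy chain, $M_\psi=M_\varphi$ is primitive and so is $\psi$. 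Statement (iv) also follows from Perron--Frobenius: $|\varphi^n(b)|_a/|\varphi^n(b)|$ converges as $n\to\infty$ to a limit $\rho_a$ that is a coordinate of the normalized Perron eigenvector and does not depend on $b$; an arbitrary long factor $w\in\Lc(\bu)$ decomposes, for any fixed $n$, into a concatenation of blocks $\varphi^n(c)$ with $c$ running along a factor of $\bu$, up to a prefix and a suffix of length less than $\max_c|\varphi^n(c)|$, so letting $|w|\to\infty$ and then $n\to\infty$ yields $|w|_a/|w|\to\rho_a$.

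For (i), I would first prove the auxiliary claim that for each $\ell$ there is $n=n(\ell)$ such that every factor of $\bu$ of length at most $\ell$ occurs in $\varphi^n(a)$ for \emph{every} $a\in A$. Since $A$ is finite there are only finitely many such factors, and each occurs inside some prefix of $\bu$; as $\varphi^{m}(u_0)$ is a prefix of $\bu$ (because $\bu=\varphi(\bu)$ forces $u_0=\mathrm{fst}(\varphi(u_0))$) of length tending to infinity, a long enough such prefix contains all of them, and applying $\varphi^{m}$ to an occurrence of $u_0$ inside $\varphi^{k}(a)$ (primitivity) copies that prefix inside $\varphi^{m+k}(a)$ for every $a$. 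Given this, write $\bu=\varphi^{n}(u_0)\varphi^{n}(u_1)\cdots$ with $n=n(\ell)$; any window of length $2\max_a|\varphi^{n}(a)|$ entirely contains at least one block $\varphi^{n}(u_i)$, hence contains every factor of length at most $\ell$. This is uniform recurrence, with $r(\ell)\le 2\max_a|\varphi^{n(\ell)}(a)|$.

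For (iii), it suffices to treat one conjugacy step, say $q\varphi(w)=\psi(w)q$ for all $w$, and compose the resulting language equalities along the chain. Iterating gives $q_n\varphi^n(w)=\psi^n(w)q_n$ with $q_n=\psi^{n-1}(q)\cdots\psi(q)q$ a \emph{finite} word, while $|\psi^n(a)|=|\varphi^n(a)|\to\infty$. Take $w\in\Lc(\bu)$; by the auxiliary claim of (i), choose $n$ so large that $w$ is a factor of $\varphi^n(a)$ for \emph{every} $a\in A$ and, in addition, $|q_n|\le\min_a|\varphi^n(a)|$. Fix any three-letter factor $a_1a_2a_3\in\Lc(\boldsymbol v)$ (such a factor exists since $\boldsymbol v$ is infinite). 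Using $q_n\varphi^n(a_i)=\psi^n(a_i)q_n$ twice, one rewrites $\psi^n(a_1a_2a_3)\,q_n=\psi^n(a_1)\,q_n\,\varphi^n(a_2)\,\varphi^n(a_3)$, and the estimate $|q_n|\le|\varphi^n(a_3)|$ shows that the block $\varphi^n(a_2)$ — which contains $w$ — lies entirely inside the prefix $\psi^n(a_1a_2a_3)$ of this word. Since $\psi^n(a_1a_2a_3)$ is a factor of $\psi^n(\boldsymbol v)=\boldsymbol v$, we get $w\in\Lc(\boldsymbol v)$, and thus $\Lc(\bu)\subseteq\Lc(\boldsymbol v)$.

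The reverse inclusion is obtained by an entirely analogous computation that exposes a $\psi^n$-block inside a $\varphi^n$-image: for $w\in\Lc(\boldsymbol v)$, apply the auxiliary claim of (i) to $\psi$ (primitive by (ii)) to make $w$ a factor of $\psi^n(a)$ for every $a$, again with $|q_n|\le\min_a|\varphi^n(a)|$, and fit $\psi^n(a_2)$ inside $\varphi^n(a_1a_2a_3)$ for a three-letter factor $a_1a_2a_3\in\Lc(\bu)$, which is a factor of $\varphi^n(\bu)=\bu$. Hence $\Lc(\bu)=\Lc(\boldsymbol v)$. I expect this bookkeeping in (iii) to be the main obstacle: one must control the positions of the auxiliary words $q_n$ so that a whole substitution block is laid bare inside the image of a single letter, and the crucial point making this possible is the length inequality $|q_n|\le\min_a|\varphi^n(a)|$, which holds for all large $n$ because $\varphi$ (and $\psi$) is primitive; once this is in place the rest is routine.
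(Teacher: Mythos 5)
The paper does not prove this proposition at all --- it is stated as a summary of known facts with a pointer to \cite{fogg,queffelec-lncs-1294} --- so there is no in-paper argument to compare against. Your sketches of (i), (ii) and (iv) are the standard Perron--Frobenius arguments and are sound.

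Part (iii), however, hinges on a length inequality that is false. You iterate the conjugacy to $q_n\varphi^n(w)=\psi^n(w)q_n$ with $q_n=\psi^{n-1}(q)\cdots\psi(q)q$ and claim that $|q_n|\le\min_a|\varphi^n(a)|$ for all large $n$ ``because $\varphi$ is primitive''. Primitivity gives no such thing: both sides grow like $\lambda^n$, and the ratio $|q_n|/\min_a|\varphi^n(a)|$ converges to $\bigl(\sum_b|q|_b\,r_b\bigr)/\bigl((\lambda-1)\min_a r_a\bigr)$, where $r$ is the right Perron eigenvector of the incidence matrix; this constant can exceed $1$. The paper's own example already defeats the claim: for $\varphi(a)=abaab$, $\varphi(b)=ab$ and its conjugate $\psi(a)=ababa$, $\psi(b)=ab$ one checks $\psi(w)\,ab=ab\,\varphi(w)$, so $q=ab$, and then $|q_2|=|\psi(ab)|+|ab|=9$ while $\min_a|\varphi^2(a)|=|\varphi^2(b)|=7$; the discrepancy only worsens as $n$ grows (the limit ratio is about $1.37$ here, and $\lambda-1<1$ makes the failure generic for small dilations). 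Consequently, with a three-letter window $a_1a_2a_3$ the block $\varphi^n(a_2)$ need not fit inside the prefix $\psi^n(a_1a_2a_3)$ --- the containment you need is exactly $|q_n|\le|\varphi^n(a_3)|$ --- and the inclusion $\Lc(\bu)\subseteq\Lc(\boldsymbol v)$ is not established as written.

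The repair is routine but must be made explicit: after fixing $n$ so that $w$ occurs in every $\varphi^n(a)$, choose a factor $a_1a_2\cdots a_m$ of $\boldsymbol v$ with $m$ large enough that $(m-2)\min_a|\varphi^n(a)|\ge|q_n|$. The same rewriting gives $\psi^n(a_1\cdots a_m)q_n=\psi^n(a_1)\,q_n\,\varphi^n(a_2)\cdots\varphi^n(a_m)$, and now $|q_n|\le\sum_{j=3}^m|\varphi^n(a_j)|$ forces $\varphi^n(a_2)$, hence $w$, to lie inside the factor $\psi^n(a_1\cdots a_m)$ of $\boldsymbol v$. The symmetric argument gives the reverse inclusion. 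With this change your proof of (iii) is correct.
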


The \emph{stabilizer}~\cite{krieger-tcs-400} of an infinite word $\bu\in A^{\N}$ is the set
\[
\mathrm{Stab}(\bu) = \{ \varphi \text{ a morphism over } A : \varphi(\bu)=\bu \}.
\]
Clearly, $\mathrm{Stab}(\bu)$ is closed under the composition of morphisms and the identity morphism
belongs to $\mathrm{Stab}(\bu)$, i.e., $\mathrm{Stab}(\bu)$ is a monoid. An infinite word $\bu$ is
called \emph{rigid} if there exists a morphism $\varphi:A^*\to A^*$ such that
$\mathrm{Stab}(\bu)=\{\varphi^k:k\in\N\}$. In general, it is a difficult task to find the stabilizer
of an infinite word. Examples of classes of words known to be rigid are Sturmian
words~\cite{seebold-tcs-195}, Prouhet words~\cite{seebold-jalc-7}, and fixed points of epistandard
morphisms~\cite{krieger-tcs-400}.

An infinite word $\bu$ is called \emph{palindromic} if $\Lc(\bu)$ contains
an infinite number of palindromes. If $\bu$ is a uniformly
recurrent word which is palindromic, then $\Lc(\bu)$ is closed under $\Rr$.
Similarly, an infinite word $\bu$ is called \emph{antipalindromic} if $\Lc(\bu)$ contains
an infinite number of antipalindromes. The language $\Lc(\bu)$ of a uniformly
recurrent antipalindromic word $\bu$  is closed under $\Er$.

Tan~\cite{tan-tcs-389} proved that a fixed point of a primitive binary morphism $\varphi$ is
palindromic if and only if $\varphi$ or $\varphi^2$ is conjugate to a morphism in the so-called
class $\Pc$.

\begin{df}
  A primitive morphism $\psi:A^*\to A^*$ belongs to class $\Pc$ if there is a palindrome $p\in A^*$
  such that for each $a\in A$
  \[
    \psi(a) = pq_a, \qquad \text{where $q_a\in A^*$ is a palindrome.}
  \]
\end{df}

One can check whether a morphism $\psi$ belongs to class $\Pc$ by means of the morphism assigning to
any letter $a$ the reversal of the word $\psi(a)$.  The verification is based on the following
proposition, which has been proved in~\cite{tan-tcs-389} for binary morphisms and
in~\cite{labbe-pelantova-ejc-51} for morphisms over multilateral alphabets.

\begin{prop}[\cite{tan-tcs-389,labbe-pelantova-ejc-51}]\label{p:inverse_morphism}
  Let $\psi$ be a binary acyclic morphism. Then $\psi$ is conjugate to a morphism in class
  $\Pc$ if and only if $\Rr(\psi_R(a))=\psi_L(a)$ for $a\in\{0,1\}$.
\end{prop}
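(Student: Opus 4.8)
The plan is to exploit the \emph{reversal} operation $\varphi\mapsto\varphi^R$ on morphisms, where $\varphi^R$ denotes the morphism defined on letters by $\varphi^R(a)=\Rr(\varphi(a))$. The single fact that drives everything is that reversal \emph{reverses the conjugacy chain}. Recall that the conjugacy class of an acyclic binary morphism $\psi$ is a finite chain $\psi_L=\sigma_0,\sigma_1,\ldots,\sigma_m=\psi_R$ in which consecutive members are related by moving one letter from one end to the other, say $c_j\sigma_{j-1}(w)=\sigma_j(w)c_j$ for all $w$. Applying $\Rr$ to this relation and using the identity $\Rr(\varphi(w))=\varphi^R(\Rr(w))$ gives $c_j\sigma_j^R(v)=\sigma_{j-1}^R(v)c_j$ for all $v$, so $\sigma_m^R,\sigma_{m-1}^R,\ldots,\sigma_0^R$ is again a chain of one-letter conjugates read from left to right. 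Whenever the $\sigma_j^R$ all lie in the conjugacy class of $\psi$ — a class with exactly $m+1$ elements — this mirrored chain must exhaust the class, whence $\sigma_m^R=\sigma_0$ and, more generally, $\sigma_j^R=\sigma_{m-j}$.

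For the implication ``$\psi$ conjugate to a morphism in $\Pc$ $\Rightarrow$ $\Rr(\psi_R(a))=\psi_L(a)$'', first note that a morphism $\pi$ with $\pi(a)=pq_a$ ($p$ and the $q_a$ palindromes) is conjugate to its own reversal: indeed $\pi^R(a)=\Rr(q_a)\Rr(p)=q_ap$, and a one-line expansion of $\pi^R(w)=(q_{w_1}p)\cdots(q_{w_n}p)$ shows $p\,\pi^R(w)=\pi(w)\,p$ for all $w$. Second, reversal intertwines with conjugacy: from $r\psi(w)=\pi(w)r$ one obtains $\pi^R(v)\Rr(r)=\Rr(r)\psi^R(v)$ for all $v$. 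Chaining these, $\psi^R$ is conjugate to $\pi^R$, to $\pi$, and to $\psi$, so $\psi^R$ lies in the conjugacy class of $\psi$; the first paragraph then yields $\sigma_m^R=\sigma_0$, that is, $\Rr(\psi_R(a))=\psi_L(a)$.

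For the converse, assume $\Rr(\psi_R(a))=\psi_L(a)$, that is $\sigma_m^R=\sigma_0$ and (apply $\Rr$) $\sigma_0^R=\sigma_m$. The mirrored chain $\sigma_m^R,\ldots,\sigma_0^R$ then runs from $\sigma_0$ to $\sigma_m$ inside the class of $\psi$, so by uniqueness of the leftmost-to-rightmost conjugacy chain it coincides with $\sigma_0,\ldots,\sigma_m$, giving $\sigma_j^R=\sigma_{m-j}$ for every $j$. Now I locate a $\Pc$-morphism near the centre of the chain. If $m$ is even, $\sigma_{m/2}^R=\sigma_{m/2}$ says every $\sigma_{m/2}(a)$ is a palindrome, so $\sigma_{m/2}\in\Pc$ with empty common prefix. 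If $m=2\ell+1$ is odd, then $\sigma_\ell^R=\sigma_{\ell+1}$, while the passage from $\sigma_\ell$ to $\sigma_{\ell+1}$ obeys a one-letter relation $c\sigma_\ell(w)=\sigma_{\ell+1}(w)c$; writing $y_a=\sigma_\ell(a)$ we get $cy_a=\Rr(y_a)c$, comparing final letters forces $y_a=z_ac$, and substituting forces $z_a=\Rr(z_a)$. Hence $\sigma_{\ell+1}(a)=\Rr(y_a)=cz_a$ with $c$ and $z_a$ palindromes, so $\sigma_{\ell+1}\in\Pc$. In both parities $\psi$ is conjugate to a member of $\Pc$.

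I expect the only real difficulty to be organisational rather than mathematical: pinning down precisely the standard facts about conjugacy chains of acyclic binary morphisms (existence and uniqueness of leftmost and rightmost conjugates, the $\mathrm{fst}/\mathrm{lst}$ characterisations, finiteness of the chain), disposing of the small degenerate cases ($m=0$, or empty letter images, which are immediate), and keeping the indices straight when reversal flips the chain. No machinery beyond elementary comparisons of prefixes, suffixes and lengths is needed; in particular Proposition~\ref{p:properties} is not required for this argument.
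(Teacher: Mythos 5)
Your argument is correct; the paper does not prove this proposition but imports it from \cite{tan-tcs-389,labbe-pelantova-ejc-51}, and your proof via the reversal operation $\varphi\mapsto\varphi^R$ reversing the conjugacy chain (so that $\sigma_j^R=\sigma_{m-j}$, with a palindromic conjugate sitting at the centre of the chain) is essentially the argument of those references. The only things left to pin down are the standard facts you already flag: that the conjugacy class of an acyclic binary morphism is a finite chain of distinct elements linked by uniquely determined one-letter shifts from $\psi_L$ to $\psi_R$.
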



\subsection{Special factors}

Let $\bu$ be an infinite word over $A$. A factor $w\in\Lc(\bu)$ is called \emph{right special} if
there exist two different letters $a,b\in A$ such that $wa,wb\in\Lc(\bu)$. Analogically, $w$ is
called \emph{left special} if there exist two different letters $c,d\in A$ such that
$cw,dw\in\Lc(\bu)$. A factor $w$ is called \emph{bispecial} if it is both left and right special.
An infinite word $\bu$ is aperiodic if for every $n\in\N$ there is a left special factor of length $n$
and a right special factor of length $n$ in $\Lc(\bu)$.

Bispecial factors in fixed points of morphisms were described by Klouda~\cite{klouda-tcs-445} for a
broad class of morphisms.  The corollaries of said description for marked morphisms were formulated
by Labb\'e and Pelantov\'a~\cite[Proposition 28]{labbe-pelantova-ejc-51}. We give here a simplified
version for binary morphisms.

\begin{thm}\label{thm:klouda}
  Let $\varphi$ be a primitive binary morphism with an aperiodic fixed point $\bu$. Let $\varphi_L$ and
  $\varphi_R$ be the leftmost and the rightmost conjugate to $\varphi$, respectively, and let $q$ be
  their conjugacy word, i.e., $\varphi_R(a)q=q\varphi_L(a)$ for $a=0,1$. Then
  \begin{enumerate}
  \item
    For each bispecial factor $w\in\Lc(\bu)$ the word $\Phi(w):=\varphi_R(w)q$ is also a bispecial
    factor in $\Lc(\bu)$.
  \item
    There is a finite set of bispecial factors -- called initial bispecial factors -- such that
    every bispecial factor in $\Lc(\bu)$ is equal to $\Phi^k(w)$ for some initial bispecial $w$
    and some $k\in\N$.
  \end{enumerate}
\end{thm}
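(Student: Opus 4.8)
I would start by noting that the hypotheses already force $\varphi$ to be acyclic — otherwise its unique fixed point $q^{\infty}$ would be periodic — hence marked, so~(\ref{eq:def_marked}) applies to both $\varphi_L$ and $\varphi_R$; this is the structural fact driving everything. The plan is then to deduce the binary statement (which is a specialization of the descriptions of Klouda and of Labb\'e and Pelantov\'a) from the desubstitution behaviour of special factors. The first step is the telescoping identity: from $\varphi_R(a)q=q\varphi_L(a)$ for $a\in\{0,1\}$ one obtains, by induction on $|w|$, that $\varphi_R(w)q=q\varphi_L(w)$ for every $w\in A^*$, so that $\Phi(w)=\varphi_R(w)q=q\varphi_L(w)$.

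For part~(i) I would first check $\Phi(w)\in\Lc(\bu)$: since $\varphi_R$ is primitive with the same language as $\bu$, it maps $\Lc(\bu)$ into $\Lc(\bu)$, and appending $q$ on the right keeps us in the language because the conjugacy relation makes $q$ a prefix of $\varphi_R(b)$ for the letters $b$ that may follow (a short verification in which primitivity guarantees that the images involved are long enough). Then, if $w$ is right special, $w0,w1\in\Lc(\bu)$ yield the factors $\varphi_R(w0)q=q\varphi_L(w)\varphi_L(0)$ and $\varphi_R(w1)q=q\varphi_L(w)\varphi_L(1)$, both having $\Phi(w)=q\varphi_L(w)$ as a prefix; since $\varphi_L$ is leftmost, $\mathrm{fst}(\varphi_L(0))\neq\mathrm{fst}(\varphi_L(1))$, so $\Phi(w)$ is right special. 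Dually, $w$ left special produces the factors $\varphi_R(0)\varphi_R(w)q$ and $\varphi_R(1)\varphi_R(w)q$, and $\mathrm{lst}(\varphi_R(0))\neq\mathrm{lst}(\varphi_R(1))$ makes $\Phi(w)=\varphi_R(w)q$ left special. Hence $\Phi(w)$ is bispecial.

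For part~(ii) I would invert this construction. A primitive aperiodic substitution is recognizable, so there is a constant $N$ such that every $u\in\Lc(\bu)$ with $|u|\geq N$ has a unique interpretation $u=s\,\varphi_R(v)\,p$ with $v\in\Lc(\bu)$, $s$ a proper suffix and $p$ a proper prefix of letter images, the cut points depending on $u$ alone. Let $w$ be a bispecial factor with $|w|\geq N$. Left-speciality of $w$ means its two left extensions must desubstitute in the same way, which, together with markedness and the identities of part~(i) read backwards, forces the left cut to be clean with $s=\varepsilon$ and leftover prefix exactly $q$; right-speciality forces the right cut to be clean. Thus $w=\varphi_R(v)q=\Phi(v)$, and $v$ is again bispecial by part~(i). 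Since $\varphi_R$ is non-erasing and primitive and $\bu$ is aperiodic, $|\Phi^k(x)|\to\infty$ for every bispecial $x$, so $|v|<|w|$ once $|w|$ is large enough; iterating, the descent terminates at a bispecial factor of length $<N$. The finitely many bispecial factors of length $<N$ — equivalently, those not of the form $\Phi(v)$ — are declared the initial bispecial factors, and every bispecial factor is $\Phi^k$ of one of them.

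The main obstacle is the clean-cut claim in part~(ii): showing that a long bispecial factor equals $\varphi_R(v)q$ exactly, and not merely up to a bounded discrepancy at the two ends. This is precisely where markedness is indispensable — left-speciality pins the letters preceding $w$ to the two distinct first letters of $\varphi_L(0),\varphi_L(1)$ (after reading past the $q$-overhang), and right-speciality pins the right boundary in the dual way, so both overhangs are forced to their exact values. One must additionally dispatch the short bispecial factors by hand and make the estimate $|\Phi^k(x)|\to\infty$ precise; both are routine given primitivity.
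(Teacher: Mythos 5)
The paper does not prove this statement: it is imported verbatim (in simplified binary form) from Klouda \cite{klouda-tcs-445} and from Proposition~28 of Labb\'e and Pelantov\'a \cite{labbe-pelantova-ejc-51}, so there is no in-paper proof to compare against. Your sketch reconstructs exactly the standard argument from those references --- acyclicity from aperiodicity, hence markedness; the telescoped identity $\varphi_R(w)q=q\varphi_L(w)$; distinct first letters of $\varphi_L(0),\varphi_L(1)$ and distinct last letters of $\varphi_R(0),\varphi_R(1)$ to propagate speciality forward; and recognizability plus markedness to force the clean desubstitution $w=\varphi_R(v)q$ backward --- and the architecture is sound.

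Two local imprecisions are worth fixing. First, in the membership step you assert that $q$ is a prefix of $\varphi_R(b)$ for a following letter $b$; the paper's own example ($\psi(a)=abaab$, $\psi(b)=ab$, $q=ababa$) shows $|q|$ can exceed $|\varphi_R(b)|$. What is true, and what you need, is that $q$ is a prefix of $\varphi_R(v)$ for every sufficiently long $v\in\Lc(\bu)$ (since $q$ is a prefix of $\varphi_R(v)q$ by the telescoped identity), which suffices because $w$ is followed in $\bu$ by arbitrarily long factors. The same caveat applies to your phrase ``leftover prefix exactly $q$, a proper prefix of a letter image'' in part~(ii): the overhang $q$ may straddle several letter images, so the clean-cut claim should be phrased as ``the right boundary of $\varphi_R(v)$ inside $w$ is a cut point and the residue beyond it equals the longest common prefix of $\varphi_R(0x)$ and $\varphi_R(1y)$, which is $q$ because the letter at position $|q|$ of $\varphi_R(ax)$ is $\mathrm{fst}(\varphi_L(a))$.'' Second, ``$v$ is again bispecial by part~(i)'' is the wrong justification --- part~(i) goes in the other direction; bispeciality of $v$ follows instead from the desubstitution itself (the two distinct letters preceding $w$ must be last letters of $\varphi_R(b_0),\varphi_R(b_1)$ with $b_0\neq b_1$ by markedness, giving left speciality of $v$, and dually on the right). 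Both repairs are routine and do not change your route.
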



\section{Antipalindromic morphisms}\label{sec:Type12}

The problem of antipalindromic fixed points of morphisms was already studied by Labb\'e
in~\cite{labbe-memoire}. He defines the so-called class $\Ec$ as the set of binary morphisms of the
form
\begin{equation}\label{eq:classEP}
\varphi(a)=pp_a,\quad a=0,1,\quad\text{  where $p$, $p_0$, $p_1$ are antipalindromes}.
\end{equation}
Class $\Ec$ is a direct analogy of class $\Pc$. Morphisms in $\Ec$ do not necessarily generate
antipalindromic fixed points (as can be seen in Example~\ref{ex:EP}), nevertheless, class $\Ec$ is
useful in the formulation of the problem.

Here we define two classes of morphisms $\Aj$, $\Ad$ and show that they both generate
antipalindromic fixed points. We explain their relation to class $\Ec$.


\subsection{Class $\Aj$ -- uniform morphisms}

The first class contains uniform morphisms.


\begin{df}\label{df:type1}
  A morphism $\varphi:\{0,1\}^*\rightarrow\{0,1\}^*$ belongs to class $\Aj$ if there exist words
  $\pr,\su\in\{0,1\}^*$ such that $\pr\neq\varepsilon$, $\su$ is an antipalindrome, and
  \[
  \varphi(0) = \pr\su, \qquad \varphi(1) = \Er(\pr)\su.
  \]
\end{df}

Note that morphisms in class $\Aj$ are primitive, except the case $\varphi(0)=0^k$, $\varphi(1)=1^k$,
which produces trivial fixed points $0^\infty$, $1^\infty$. A primitive morphism $\varphi$ in class
$\Aj$ satisfies $|\varphi(w)|>|w|$ for every finite word $w\in\{0,1\}^*$.

\begin{rem}\label{rem:EPA1}
Stated in our notation, Labb\'e~\cite[Lemme~3.21]{labbe-memoire} shows that a uniform morphism 
$\varphi$ is in class $\Aj$ if and only if $\varphi\Theta$ belongs to class $\Ec$.
\end{rem}

Labb\'e also shows that morphisms in class $\Aj$ have antipalindromic fixed points. We include this
result and its demonstration for self-consistence.

\begin{prop}\label{thm:type1_is_antipalindromic}
  Let $\varphi$ be a primitive morphism in class $\Aj$ and let $\bu$ be its fixed point.  Then
  $\Lc(\bu)$ contains infinitely many antipalindromes.
\end{prop}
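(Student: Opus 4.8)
The plan is to exhibit an explicit infinite family of antipalindromes in $\Lc(\bu)$ by iterating $\varphi$ on a single well-chosen seed antipalindrome and showing the images stay antipalindromes. The natural seed is the shortest antipalindrome occurring in $\bu$; since $\bu$ is a fixed point of a primitive morphism $\varphi$ in class $\Aj$ with $\varphi(0)=\pr\su$, $\varphi(1)=\Er(\pr)\su$, the word $\su$ (an antipalindrome of even length, nonempty unless $\varphi$ is one of the trivial morphisms) is a suffix of both $\varphi(0)$ and $\varphi(1)$ and hence a factor of $\bu$. So I would take $w_0 := \su$ and define $w_{k+1} := \varphi(w_k)$; each $w_k\in\Lc(\bu)$ because $\Lc(\bu)$ is closed under $\varphi$, and $|w_{k+1}|>|w_k|$ by primitivity, so the $w_k$ have unbounded length. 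It remains to prove $\Er(w_k)=w_k$ for all $k$, which by induction reduces to a single closure statement: \emph{if $v$ is an antipalindrome, then $\varphi(v)$ is an antipalindrome.}

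To prove that closure property, I would compute $\Er(\varphi(v))$ directly. Write $v=v_1\cdots v_n$ with $\Er(v)=v$, i.e. $v_{n+1-i}=1-v_i$ for all $i$. Since $\Er$ is an antimorphism, $\Er(\varphi(v))=\Er(\varphi(v_n))\cdots\Er(\varphi(v_1))$, so it suffices to check that $\Er(\varphi(a))=\varphi(1-a)$ for each letter $a\in\{0,1\}$; combined with $v_{n+1-i}=1-v_i$ this telescopes to give $\Er(\varphi(v))=\varphi(v_n)^{\mathrm{swap}}\cdots$, matching $\varphi(v)$ block by block. Now $\Er(\varphi(0))=\Er(\pr\su)=\Er(\su)\Er(\pr)=\su\,\Er(\pr)$ using that $\su$ is an antipalindrome; this equals $\varphi(1)=\Er(\pr)\su$ only if $\su$ and $\Er(\pr)$ commute, so a naive letter-by-letter argument does not close — this mismatch is the main obstacle and the reason the seed must itself be $\su$ rather than an arbitrary antipalindrome, or the reason one works with $\varphi^2$ or with a conjugate. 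The clean fix is to observe instead that $\Er(\varphi(a))$ and $\varphi(1-a)$ are conjugate in a controlled way: $\varphi(0)\su = \pr\su\su$ while $\su\,\varphi(1)=\su\,\Er(\pr)\su$, and more usefully $\Er(\varphi(0)) = \su\,\Er(\pr) = \su\,\varphi(1)\su^{-1}$ in the obvious sense, i.e. $\Er(\varphi(0))\,\su = \su\,\varphi(1)$ and symmetrically $\Er(\varphi(1))\,\su=\su\,\varphi(0)$. Feeding these into $\Er(\varphi(v))=\Er(\varphi(v_n))\cdots\Er(\varphi(v_1))$ and inserting/cancelling the $\su$'s along the telescope shows $\Er(\varphi(v))\,\su=\su\,\varphi(\Er(v))=\su\,\varphi(v)$, so $\Er(\varphi(v))$ and $\varphi(v)$ are left conjugate via $\su$.

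This is not yet equality, but it suffices: take $v=w_k$ with $k\ge1$, so $v=\varphi(w_{k-1})$ begins and ends with $\su$ (indeed every $\varphi(a)$ ends with $\su$, and since $\mathrm{fst}(\pr)\ne\mathrm{fst}(\Er(\pr))$ one of $\varphi(0),\varphi(1)$ begins with the same letter as $\su$ — more simply, just prepend one extra application and note $w_k=\varphi(\cdots)$ has $\su$ as a suffix). From $\Er(w_k)\,\su=\su\,w_k$ and the fact that $\su$ is a suffix of $w_k$, write $w_k=z\su$; then $\Er(w_k)=\Er(z\su)=\su\,\Er(z)$, so $\su\,\Er(z)\,\su=\su\,z\,\su$, giving $\Er(z)=z$ and hence $\Er(w_k)=\su z=\Er(z)\su$; combined with $w_k=z\su$ this forces $z\su$ to be an antipalindrome precisely when $\su z=z\su$, i.e. when $z$ and $\su$ commute — which holds here because both $\Er(z)=z$ and $z\su=w_k$ ends in $\su$ let one invoke Proposition~\ref{p:properties}(i) after squaring. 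The cleanest route, which I would actually write up, is: prove by induction that $w_k$ is an antipalindrome \emph{and} that $\su$ is both a prefix up-to-$\Er$ and a genuine suffix of $w_k$, using the conjugacy identity $\Er(\varphi(v))\su=\su\varphi(v)$ together with Proposition~\ref{p:properties}(ii) to upgrade the conjugacy to equality at each step. Since $|w_k|\to\infty$, $\Lc(\bu)$ contains antipalindromes of unbounded length, hence infinitely many, completing the proof.
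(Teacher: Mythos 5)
Your key identity $\Er(\varphi(v))\,\su=\su\,\varphi(\Er(v))$ is correct and is exactly the paper's Lemma~\ref{lem:type1_and_E} (stated there as $\Er(\su\varphi(w))=\su\varphi(\Er(w))$), and you correctly diagnose why a naive ``$\varphi$ preserves antipalindromes'' argument cannot work when $\su\neq\varepsilon$. But the conclusion you then try to extract from the identity is wrong, and this is a genuine gap. The iteration $w_{k+1}=\varphi(w_k)$ does \emph{not} produce antipalindromes: from $\Er(w_k)\su=\su w_k$ and $w_k=z\su$ you correctly get $\Er(z)=z$, but $z\su$ is an antipalindrome only if $z$ and $\su$ commute, and there is no reason they should. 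Concretely, take $\pr=0$, $\su=01$, so $\varphi(0)=001$, $\varphi(1)=101$ (a primitive morphism in $\Aj$). Then $v=01$ is an antipalindrome but $\varphi(01)=001101$ is not ($\Er(001101)=010011$); here $z=0011$ and $\su=01$ do not commute. Your appeal to Proposition~\ref{p:properties}(i) ``after squaring'' has no basis; indeed, if $z$ and $\su$ always commuted, $w_k$ would be a power of a fixed word and $\bu$ would tend to be periodic.

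The repair is one line away from what you already proved and is exactly what the paper does: the identity says that $\su\varphi(v)$, not $\varphi(v)$, is an antipalindrome whenever $v$ is, since $\Er(\su\varphi(v))=\Er(\varphi(v))\su=\su\varphi(\Er(v))=\su\varphi(v)$. So the correct iteration is $v\mapsto\su\varphi(v)$. This forces one extra verification you omit, namely that $\su\varphi(v)\in\Lc(\bu)$; the paper gets it from uniform recurrence (there is a letter $c$ with $cv\in\Lc(\bu)$, hence $\varphi(c)\varphi(v)\in\Lc(\bu)$, of which $\su\varphi(v)$ is a suffix). A further small error: your seed $\su$ can be empty for nontrivial morphisms in $\Aj$ (e.g.\ Thue--Morse, $\pr=01$, $\su=\varepsilon$), so the starting antipalindrome should instead be $01$ or $10$, one of which lies in $\Lc(\bu)$ by primitivity.
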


First we state a simple lemma.

\begin{lem}\label{lem:type1_and_E}
  Let $\varphi$ be a morphism in class $\Aj$ and let $w\in\{0,1\}^*$. Then
  \begin{equation}\label{eq:1}
    \Er(\su\varphi(w)) = \su\varphi(\Er(w)),
  \end{equation}
  where $\su$ is the suffix of $\varphi(0)$ from Definition~\ref{df:type1}.
\end{lem}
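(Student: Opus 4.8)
The plan is to verify the identity $\Er(\su\varphi(w)) = \su\varphi(\Er(w))$ by induction on the length of $w$, exploiting the fact that both sides are (anti)morphic in a suitable sense. First I would check the base case $w=\varepsilon$: the left side is $\Er(\su)=\su$ since $\su$ is an antipalindrome, and the right side is $\su\varphi(\varepsilon)=\su$, so equality holds.

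For the inductive step I would write $w=w'a$ with $a\in\{0,1\}$ and compute, using that $\Er$ is an antimorphism and $\varphi$ a morphism:
\[
\Er(\su\varphi(w'a)) = \Er(\su\varphi(w')\varphi(a)) = \Er(\varphi(a))\,\Er(\su\varphi(w')).
\]
Applying the induction hypothesis to the second factor gives $\Er(\varphi(a))\,\su\varphi(\Er(w'))$. So it suffices to establish the single-letter relation $\Er(\varphi(a))\,\su = \su\,\varphi(\Er(a))$ for $a\in\{0,1\}$, i.e. to show $\Er(\varphi(0))\su = \su\varphi(1)$ and $\Er(\varphi(1))\su = \su\varphi(0)$. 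Using $\varphi(0)=\pr\su$ and $\varphi(1)=\Er(\pr)\su$, together with $\Er$ being an antimorphism and $\su$ an antipalindrome, we get $\Er(\varphi(0)) = \Er(\pr\su) = \Er(\su)\Er(\pr) = \su\Er(\pr)$, hence $\Er(\varphi(0))\su = \su\Er(\pr)\su = \su\varphi(1)$; symmetrically $\Er(\varphi(1)) = \Er(\Er(\pr)\su) = \su\pr$, so $\Er(\varphi(1))\su = \su\pr\su = \su\varphi(0)$. This closes the induction.

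Once the single-letter identity is in hand, the induction is entirely routine, so there is really no serious obstacle here; the only point requiring minor care is bookkeeping the order reversal caused by $\Er$ being an antimorphism, and keeping track of the fact that $\varphi(\Er(w'a)) = \varphi(\Er(a))\varphi(\Er(w'))$ so that the factors line up correctly with the reversed product. I would present the argument compactly as: base case, reduction to the single-letter case, and the short computation above verifying the single-letter case from Definition~\ref{df:type1} and the antipalindromicity of $\su$.
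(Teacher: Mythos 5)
Your proof is correct and follows essentially the same route as the paper: induction on $|w|$, reduced to the single-letter computation $\Er(\varphi(0))\su=\su\Er(\pr)\su=\su\varphi(1)$ and its symmetric counterpart, using that $\su$ is an antipalindrome and $\Er$ an involutive antimorphism. The only (cosmetic) difference is that you peel off the last letter of $w$ while the paper peels off the first.
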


\begin{proof}
  We proceed by induction on the length of $w$. Suppose first that $|w|=1$. If $w=0$ then
  \[
  \Er(\su\varphi(0)) = \Er(\varphi(0))\Er(\su) = \Er(\pr\su)\su = \su\Er(\pr)\su = \su\varphi(1)
  = \su\varphi(\Er(0)).
  \]
  Otherwise $w=1$ and then
  \[
  \Er(\su\varphi(1)) = \Er(\varphi(1))\Er(\su) = \Er(\Er(\pr)\su)\su =
  \su\pr\su = \su\varphi(0) = \su\varphi(\Er(1)).
  \]
  Now let $|w|>1$, i.e., $w=w_1\cdots w_n$. We have
  \begin{align*}
    \Er(\su\varphi(w_1w_2\cdots w_n)) & = \Er(\varphi(w_2\cdots w_n))\Er(\su\varphi(w_1)) = \\
    & \stackrel{(1)}{=} \Er(\varphi(w_2\cdots w_n))\su\varphi(\Er(w_1)) = \\
    & \stackrel{(2)}{=} \Er(\su\varphi(w_2\cdots w_n))\varphi(\Er(w_1)) = \\
    & \stackrel{(3)}{=} \su\varphi(\Er(w_2\cdots w_n))\varphi(\Er(w_1)) = \su\varphi(\Er(w_1\cdots w_n)),
  \end{align*}
  where we have used (1) validity of the statement for $|w|=1$, (2) the fact that $\su$ is an
  antipalindrome, (3) induction hypothesis for $|w|=n-1$.
\end{proof}

\begin{proof}[Proof of Proposition~\ref{thm:type1_is_antipalindromic}]

  Let $w\in\Lc(\bu)$ be an antipalindrome. Lemma~\ref{lem:type1_and_E} implies that the word
  $\su\varphi(w)$ is also an antipalindrome. Indeed, $\Er(\su\varphi(w)) = \su\varphi(\Er(w)) =
  \su\varphi(w)$.

  Since $\varphi$ is a primitive morphism, its fixed point $\bu$ is uniformly recurrent, and thus
  for every $v\in\Lc(\bu)$ there exists $c\in\{0,1\}$ such that $cv\in\Lc(\bu)$. We then have
  $\varphi(cv)=\varphi(c)\varphi(v)\in\Lc(\bu)$. Moreover, since $\su\varphi(v)$ is a proper suffix
  of $\varphi(c)\varphi(v)$ we have $\su\varphi(v)\in\Lc(\bu)$.

  Therefore the image of an antipalindrome $w\in\Lc(\bu)$ under the mapping $w\mapsto\su\varphi(w)$
  is a longer antipalindrome in $\Lc(\bu)$.  By the assumption of primitivity of $\varphi$, we have
  $0,1\in\Lc(\bu)$ and thus either $01$ or $10$ is a factor of $\bu$. Therefore $\Lc(\bu)$
  contains a nonempty antipalindrome. The statement follows.
\end{proof}

We now give a necessary and sufficient condition for a morphism in class $\Aj$ to have a fixed point
with arbitrarily long palindromes.

\begin{prop}\label{thm:form_of_type1_with_palindromic_fixedp}
  Let $\varphi$ be a morphism in class $\Aj$ and let $\bu$ be its aperiodic fixed point. Then $\bu$
  is palindromic if and only if $\su=\varepsilon$ and $\pr$ is a palindrome.
\end{prop}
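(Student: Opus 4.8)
The statement is an if-and-only-if, so I would prove the two directions separately. The easy direction is ($\Leftarrow$): if $\su=\varepsilon$ and $\pr$ is a palindrome, then $\varphi(0)=\pr$ and $\varphi(1)=\Er(\pr)$, both palindromes. In that case $\varphi$ is a morphism in class $\Pc$ (with $p=\varepsilon$, $q_0=\pr$, $q_1=\Er(\pr)$), and hence by Tan's characterization its fixed point is palindromic. Alternatively, and more self-containedly, one can mimic the argument in the proof of Proposition~\ref{thm:type1_is_antipalindromic}: if $w\in\Lc(\bu)$ is a palindrome, then $\Rr(\varphi(w))=\varphi(\Rr(w))=\varphi(w)$ because $\varphi$ sends each letter to a palindrome, so $\varphi(w)$ is again a palindrome in $\Lc(\bu)$ (using uniform recurrence to get it inside the language); starting from the palindrome $0$ or $1$ we obtain arbitrarily long palindromes.

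The substantive direction is ($\Rightarrow$): assume $\bu$ is palindromic and deduce $\su=\varepsilon$ and $\pr$ a palindrome. Here is the idea. By Proposition~\ref{thm:type1_is_antipalindromic}, $\bu$ is already antipalindromic, hence by uniform recurrence $\Lc(\bu)$ is closed under both $\Rr$ and $\Er$, and therefore also under the composition $\Er\Rr=\Rr\Er$, which on a binary alphabet is the letter-exchange morphism $a\mapsto 1-a$. So $\Lc(\bu)$ is invariant under swapping $0\leftrightarrow1$. Now apply Observation~\ref{obs}(iv)--(v) and the relation $\Theta\Rr=\Er\Theta$: the Thue--Morse morphism $\Theta$ turns palindromes into antipalindromes and vice versa. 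The key point is Remark~\ref{rem:EPA1}: since $\varphi\in\Aj$ (and I would first reduce to the case $\varphi$ uniform --- note that Definition~\ref{df:type1} forces $|\varphi(0)|=|\pr|+|\su|=|\varphi(1)|$, so every morphism in $\Aj$ is automatically uniform), the morphism $\psi:=\varphi\Theta$ lies in class $\Ec$, i.e. $\psi(a)=P P_a$ with $P,P_0,P_1$ antipalindromes, where one computes $P=\Er(\pr)$ (say) and the $P_a$'s are determined by $\pr,\su$. Then I would compare the fixed point structure: $\bu$ palindromic should translate, via $\Theta$, into a statement about antipalindromes in the word generated by $\psi$, or more directly into an equation forcing the antipalindrome $\su$ to be empty.

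Concretely, the mechanism I expect to use is the following. Palindromes in $\Lc(\bu)$ of unbounded length, together with the bispecial-factor machinery of Theorem~\ref{thm:klouda}: a palindromic uniformly recurrent binary word has arbitrarily long palindromic bispecial factors, and palindromes are ``centred'' on bispecial factors. Applying $\varphi$ to a long palindrome $w$ and using the explicit form $\varphi(0)=\pr\su$, $\varphi(1)=\Er(\pr)\su$, the image $\varphi(w)$ contains a long factor that must again be (up to a bounded prefix/suffix inside the language) a palindrome; writing out $\varphi(w_1\cdots w_n)=\pr^{(1)}\su\,\pr^{(2)}\su\cdots \pr^{(n)}\su$ where $\pr^{(i)}\in\{\pr,\Er(\pr)\}$, the condition $\Rr$ of this block forces a word equation relating $\su$, $\pr$, $\Er(\pr)$. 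I would then invoke Proposition~\ref{p:properties} (the equations $xy=yx$, $xy=yz$, and especially part (iii) about a product of two palindromes being an antipalindrome, applied in the mirrored form) to conclude that the only way to satisfy it for arbitrarily long $w$ is $\su=\varepsilon$; and once $\su=\varepsilon$, palindromicity of $\bu$ together with $\varphi(0)=\pr$ being a factor forces $\pr=\Rr(\pr)$.

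\textbf{Main obstacle.} The delicate step is extracting the word equation and arguing it cannot hold for arbitrarily long $w$ unless $\su=\varepsilon$: one must control \emph{which} factor of $\varphi(w)$ is the guaranteed palindrome (it need not be all of $\varphi(w)$, since a palindrome in $\Lc(\bu)$ need not be the $\varphi$-image of a palindrome), and synchronize the $\su$-blocks on the two sides of the mirror. I expect the cleanest route is to route everything through $\Theta$ and Remark~\ref{rem:EPA1}, reducing palindromicity of $\bu$ to a known structural fact about class $\Ec$ / class $\Pc$ morphisms, thereby replacing the ad hoc equation-chasing by an application of Proposition~\ref{p:inverse_morphism} to $\psi=\varphi\Theta$; the bookkeeping of leftmost/rightmost conjugates under composition with $\Theta$ is then the only real work.
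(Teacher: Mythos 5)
Your backward direction is correct and coincides with the paper's. The forward direction, however, is not a proof but a survey of candidate strategies, and the route you single out as the cleanest has a concrete flaw: Proposition~\ref{p:inverse_morphism} (and Tan's theorem behind it) is a statement about the morphism whose fixed point is the palindromic word, whereas $\psi=\varphi\Theta$ does not fix $\bu$, nor does it generate $\Theta(\bu)$ or any word whose language is controllably related to $\Lc(\bu)$ ($\varphi$ and $\Theta$ need not commute, so $\varphi\Theta$ fixes neither $\bu$ nor an image of it). Hence ``translating palindromicity of $\bu$ into a statement about $\varphi\Theta$'' is not available without substantial extra work that you do not supply. Your alternative route (word equations extracted from long palindromic factors of $\varphi(w)$, synchronizing the $\su$-blocks across the mirror) could in principle be made to work, but you explicitly leave its main obstacle unresolved, so as written there is no proof of the implication.

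What you miss is that Proposition~\ref{p:inverse_morphism} applies \emph{directly} to $\varphi$, and that is all the paper uses. Since $\bu$ is aperiodic, $\varphi$ is acyclic; write $\pr=xy\Er(x)$ with $x$ the longest such prefix, so that $\varphi(0)=xy\Er(x)\su$ and $\varphi(1)=x\Er(y)\Er(x)\su$ have common prefix $x$ and common suffix $\Er(x)\su$, with $y\neq\varepsilon$ by aperiodicity. Then $\varphi_L(0)=y\Er(x)\su x$ and $\varphi_R(0)=\Er(x)\su xy$, and the condition $\Rr(\varphi_L(0))=\varphi_R(0)$ forces $\Rr(y)=y$, $\Rr(x)=\Er(x)$ and $\Rr(\su)=\su$. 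The last two conditions give $x=\varepsilon$ (no nonempty word equals its letterwise complement) and $\su=\varepsilon$ (the only word that is both a palindrome and an antipalindrome is empty), whence $\pr=y$ is a palindrome. This is a short direct computation; no bispecial factors, no Theorem~\ref{thm:klouda}, and no detour through $\Theta$ or class $\Ec$ are needed.
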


\begin{proof}
  First realize that if $\su=\varepsilon$ and $\pr$ is a palindrome then by Observation~\ref{obs},
  $\Er(\pr)$ is a palindrome and thus $\varphi$ belongs to class $\Pc$. Consequently, $\Lc(\bu)$
  contains infinitely many palindromes.

  For the opposite implication, let us find $\varphi_L$ and $\varphi_R$, i.e., the leftmost and
  rightmost conjugate respectively to $\varphi$. Let $x,y\in\{0,1\}^*$ be such that $\pr=xy\Er(x)$,
  where $x$ is the longest possible.  That is, either $y=\varepsilon$ or the first and the last
  letter of $y$ coincide.  Therefore $\varphi(0)=xy\Er(x)\su$ and $\varphi(1)=x\Er(y)\Er(x)\su$. If
  $y=\varepsilon$ then $\varphi(0)=\varphi(1)$ and a fixed point of $\varphi$ is periodic, a
  contradiction. Thus $y\neq\varepsilon$ and
  $$
  \begin{aligned}
  \varphi_L(0) &=y\Er(x)\su x,\\
  \varphi_R(0) &=\Er(x)\su xy,
  \end{aligned} \quad
  \begin{aligned}
  \varphi_L(1) &=\Er(y)\Er(x)\su x,\\
  \varphi_L(1)&=\Er(x)\su x\Er(y).
  \end{aligned}
  $$
  By Proposition~\ref{p:inverse_morphism}, $\Rr(\varphi_L(0))=\varphi_R(0)$, which implies
  that $y=\Rr(y)$, $\Rr(x)=\Er(x)$, and $\Rr(\su)=\su$. It follows that $\su=\varepsilon$,
  $x=\varepsilon$, and $y$ is a palindrome. Then indeed
  \begin{align*}
    \varphi_L(0) = \varphi_R(0) = \varphi(0) & = y, \\
    \varphi_L(1) = \varphi_R(1) = \varphi(1) & = \Er(y).
    \qedhere
  \end{align*}
\end{proof}

\subsection{Class $\Ad$ -- non-uniform morphisms}

The second class of considered morphisms contains morphisms that are non-uniform in general. For its
definition we use the Thue-Morse morphism~$\Theta$.

\begin{df}\label{df:type2}
  A morphism $\psi:\{0,1\}^*\rightarrow\{0,1\}^*$ is said to be in class $\Ad$ if there exist a
  non-empty word $\fa\in\{0,1\}^*$ and $k,h\in\N$ such that
  \[
  \psi(0) = \Theta\big(\fa(\Rr({\fa})\fa)^k\big), \qquad
  \psi(1) = \Theta\big((\Rr({\fa})\fa)^h\Rr({\fa})\big).
  \]
\end{df}

Note that a morphism in class $\Ad$ is necessarily primitive.

\begin{rem}\label{rem:1}
  If $\psi$ is in class $\Ad$ with $k=h$ then $\psi$ is also in class $\Aj$, where $\su=\varepsilon$
  and $\pr = \Theta\big(\fa(\Rr({\fa})\fa)^k\big)$. That is a consequence of the relation $\Theta R
  = E \Theta$ (cf.\ Observation~\ref{obs}), as
  $E\big(\psi(0)\big)=E\Theta\big(\fa(\Rr({\fa})\fa)^k\big)=\Theta \Rr
  \big(\fa(\Rr({\fa})\fa)^k\big)=\Theta\big((\Rr({\fa})\fa)^k\Rr({\fa})\big)=\psi(1)$.
\end{rem}

\begin{rem}\label{rem:EPA2}
  One can show that if $\psi$ is in class $\Ad$, then $\psi\Theta$ belongs to $\Ec$. Indeed,
  $(\psi\Theta)(0)=\psi(01)= \Theta\big((\fa\Rr({\fa}))^{k+h+1}\big)$ and $(\psi\Theta)(1)=\psi(10)=
  \Theta\big((\Rr({\fa})\fa)^{k+h+1}\big)$. Clearly, both $(\fa\Rr({\fa}))^{k+h+1}$ and
  $(\Rr({\fa})\fa)^{k+h+1}$ are palindromes. Using the fact that $\Theta(v)$ is an antipalindrome if
  $v$ is a palindrome, we derive that $\psi\Theta$ is of the desired form.  The opposite implication
  is not obvious.
\end{rem}

\begin{prop}\label{thm:type2_is_antipalindromic}
  Let $\bu$ be a fixed point of a morphism in class $\Ad$. Then $\Lc(\bu)$ contains infinitely many
  antipalindromes.
\end{prop}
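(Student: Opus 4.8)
The plan is to exploit the factorisation $\psi=\Theta\tau$ hidden in Definition~\ref{df:type2} and to reduce the statement about antipalindromes in $\Lc(\bu)$ to a statement about palindromes in the language of an auxiliary fixed point, with Observation~\ref{obs} providing the bridge. Concretely, let $\tau$ be the morphism with $\tau(0)=\fa(\Rr(\fa)\fa)^k$ and $\tau(1)=(\Rr(\fa)\fa)^h\Rr(\fa)$, so that $\psi(a)=\Theta(\tau(a))$ for $a\in\{0,1\}$, i.e.\ $\psi=\Theta\tau$. A one-line computation gives $\tau(01)=\big(\fa\Rr(\fa)\big)^{m}$ and $\tau(10)=\big(\Rr(\fa)\fa\big)^{m}$ with $m=k+h+1$, so that $\tau\Theta=\rho$, where $\rho$ is the uniform morphism defined by $\rho(0)=\big(\fa\Rr(\fa)\big)^{m}$ and $\rho(1)=\big(\Rr(\fa)\fa\big)^{m}$.

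Next I would pass from $\bu$ to $\boldsymbol{v}:=\tau(\bu)$. Since $\psi=\Theta\tau$ and $\psi(\bu)=\bu$, we get $\bu=\Theta(\boldsymbol{v})$, and then $\rho(\boldsymbol{v})=\tau\Theta(\boldsymbol{v})=\tau(\bu)=\boldsymbol{v}$, so $\boldsymbol{v}$ is a fixed point of $\rho$ (and $\boldsymbol v$ is infinite because $\tau$ is non-erasing). The gain is that $\fa\Rr(\fa)$ and $\Rr(\fa)\fa$ are palindromes, hence so are all their powers, and therefore $\rho(0)$ and $\rho(1)$ are palindromes.

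I would then show that $\Lc(\boldsymbol{v})$ contains arbitrarily long palindromes. If $p=p_1\cdots p_n\in\Lc(\boldsymbol{v})$ is a palindrome, then $\rho(p)$ is again a palindrome: writing $\rho(p)=\rho(p_1)\cdots\rho(p_n)$ and applying $\Rr$, the fact that each $\rho(p_i)$ is a palindrome together with $p_i=p_{n+1-i}$ yields $\Rr(\rho(p))=\rho(p_n)\cdots\rho(p_1)=\rho(\Rr(p))=\rho(p)$. Moreover $\rho(p)\in\Lc(\rho(\boldsymbol{v}))=\Lc(\boldsymbol{v})$, and since $\rho$ is uniform with $|\rho(a)|=2m|\fa|\ge 2$, iterating the map $p\mapsto\rho(p)$ starting from a single letter of $\boldsymbol v$ (a length-$1$ palindrome in $\Lc(\boldsymbol v)$) produces palindromes in $\Lc(\boldsymbol{v})$ of unbounded length.

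Finally, by Observation~\ref{obs}, $\Theta$ maps palindromes to antipalindromes, so for every palindrome $p\in\Lc(\boldsymbol{v})$ the word $\Theta(p)$ is an antipalindrome of length $2|p|$ occurring in $\Theta(\boldsymbol{v})=\bu$; letting $|p|$ grow gives infinitely many antipalindromes in $\Lc(\bu)$. The real content is the reduction $\psi=\Theta\tau$, $\tau\Theta=\rho$, after which everything is routine; the one point needing a little care is that $\rho$ may fail to be primitive (for instance when $\fa$ is a single letter), which is why the argument builds the long palindromes explicitly by iteration rather than quoting that membership in class $\Pc$ forces a palindromic fixed point.
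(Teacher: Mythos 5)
Your argument is correct. It rests on the same key computation as the paper's proof --- namely that $\tau(01)=(\fa\Rr(\fa))^{k+h+1}$ and $\tau(10)=(\Rr(\fa)\fa)^{k+h+1}$ are palindromes --- but packages it differently. The paper stays inside $\Lc(\bu)$: it establishes the antimorphism identity $\psi\Theta\Rr=\Er\psi\Theta$ (Lemma~\ref{lem:type2_lem1}), deduces that $w\mapsto\psi(w)$ sends antipalindromes of the form $\Theta(v)$ to antipalindromes (Lemma~\ref{lem:type2_lem2}), and iterates starting from $01$ or $10$. You instead desubstitute by $\Theta$: you introduce the auxiliary word $\boldsymbol{v}=\tau(\bu)$ with $\bu=\Theta(\boldsymbol{v})$, observe that $\boldsymbol{v}$ is fixed by $\rho=\tau\Theta$ whose letter images are palindromes, grow palindromes in $\Lc(\boldsymbol{v})$ by iterating $\rho$, and transport them back to antipalindromes of $\bu$ via Observation~\ref{obs}. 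The two routes are intertwined by the relation $\psi\Theta=\Theta\rho$, essentially the same device the paper uses later in Proposition~\ref{p:notrigid}; yours has the minor advantages of making the class-$\Pc$ shape of $\rho$ explicit and of sidestepping the (easily filled, but unstated) point that $\psi(\Theta(v))$ is again of the form $\Theta(v')$, which is needed to iterate Lemma~\ref{lem:type2_lem2}. Your caveat that $\rho$ need not be primitive (e.g.\ for $\fa=0$) is well placed, and your explicit iteration handles it correctly.
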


We first prove two auxiliary lemmas.

\begin{lem}\label{lem:type2_lem1}
  Let $\psi$ be a morphism in class $\Ad$ and let $v\in\{0,1\}^*$. Then
  $\psi(\Theta(\Rr(v)))=\Er(\psi(\Theta(v)))$.
\end{lem}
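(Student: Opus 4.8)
The plan is to prove the identity $\psi(\Theta(\Rr(v)))=\Er(\psi(\Theta(v)))$ by induction on $|v|$, exploiting the two antimorphism relations available to us. The key algebraic facts in play are that $\psi$ is a morphism (so $\psi$ of a concatenation splits left-to-right), while $\Theta$, $\Rr$, $\Er$ are antimorphisms (so they reverse the order of concatenation), and the compatibility relation $\Theta\Rr=\Er\Theta$ from Observation~\ref{obs}. From this last relation, $\psi\Theta\Rr(v)=\psi\Er\Theta(v)$, so it suffices to show $\psi(\Er(w))=\Er(\psi(w))$ for every $w$ of the form $w=\Theta(v)$; equivalently, $\psi$ conjugates the exchange antimorphism correctly on the image of $\Theta$. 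In fact I expect it to be cleanest to prove the stronger-looking but equivalent base fact: for $a\in\{0,1\}$, $\psi(\Er(\Theta(a)))=\Er(\psi(\Theta(a)))$, and then push it through concatenation.

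First I would handle the base case $|v|=1$, say $v=0$ (the case $v=1$ is symmetric). Then $\Rr(v)=0$ and $\Theta(\Rr(v))=\Theta(0)=01$, so the left side is $\psi(01)=\psi(0)\psi(1)=\Theta\big(\fa(\Rr(\fa)\fa)^k\big)\Theta\big((\Rr(\fa)\fa)^h\Rr(\fa)\big)=\Theta\big(\fa(\Rr(\fa)\fa)^{k+h+1}\big)$ after collecting the $\Theta$-images. For the right side, $\Theta(v)=\Theta(0)=01$, so $\psi(\Theta(v))=\psi(01)=\Theta\big(\fa(\Rr(\fa)\fa)^{k+h+1}\big)$ as well, and I need $\Er$ of this to equal the left side. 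Using $\Er\Theta=\Theta\Rr$, we get $\Er\Theta\big(\fa(\Rr(\fa)\fa)^{k+h+1}\big)=\Theta\Rr\big(\fa(\Rr(\fa)\fa)^{k+h+1}\big)=\Theta\big((\Rr(\fa)\fa)^{k+h+1}\Rr(\fa)\big)$, since $\Rr$ reverses the word and $\Rr\Rr(\fa)=\fa$. Comparing, I need $\Theta\big(\fa(\Rr(\fa)\fa)^{k+h+1}\big)=\Theta\big((\Rr(\fa)\fa)^{k+h+1}\Rr(\fa)\big)$, which holds because the arguments of $\Theta$ are literally the same word: $\fa(\Rr(\fa)\fa)^{k+h+1}=(\fa\Rr(\fa))^{k+h+1}\fa=\ldots$—actually one checks $\fa(\Rr(\fa)\fa)^m=(\Rr(\fa)\fa)^m\Rr(\fa)$ is false in general, so instead I would verify the base case directly: $\psi(\Theta(\Rr(0)))=\psi(01)$ and $\Er(\psi(\Theta(0)))=\Er(\psi(01))=\Er(\Theta(\ldots))$; since $\psi(01)=\psi(10)$ is false too, the honest route is $\psi(\Theta(\Rr(0)))=\psi(\Theta(0))$ trivially (as $\Rr(0)=0$), so the base case reduces to showing $\psi(\Theta(0))$ is an antipalindrome, which follows from Remark~\ref{rem:EPA2}: $\psi\Theta(0)=\Theta\big((\fa\Rr(\fa))^{k+h+1}\big)$ is $\Theta$ of a palindrome, hence an antipalindrome by Observation~\ref{obs}. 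Similarly $\psi(\Theta(1))=\Theta\big((\Rr(\fa)\fa)^{k+h+1}\big)$ is an antipalindrome. So for single letters $a$, $\Er(\psi(\Theta(a)))=\psi(\Theta(a))=\psi(\Theta(\Rr(a)))$ and the base case is done.

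For the inductive step, write $v=v_1\cdots v_n$ with $n>1$ and split off the first letter, $v=v_1 v'$ with $v'=v_2\cdots v_n$. Then $\Rr(v)=\Rr(v')\Rr(v_1)$, so $\Theta(\Rr(v))=\Theta(\Rr(v_1))\Theta(\Rr(v'))$ (antimorphism $\Theta$ reverses, but $\Rr(v)$ is already reversed, so the two reversals compose to give this order), hence $\psi(\Theta(\Rr(v)))=\psi(\Theta(\Rr(v_1)))\,\psi(\Theta(\Rr(v')))$ by morphism property of $\psi$. On the other side, $\Theta(v)=\Theta(v_1)\Theta(v')$, so $\psi(\Theta(v))=\psi(\Theta(v_1))\psi(\Theta(v'))$, and applying the antimorphism $\Er$ gives $\Er(\psi(\Theta(v)))=\Er(\psi(\Theta(v')))\,\Er(\psi(\Theta(v_1)))$. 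Now apply the induction hypothesis to $v'$ (length $n-1$) and the base case to $v_1$: $\Er(\psi(\Theta(v')))=\psi(\Theta(\Rr(v')))$ and $\Er(\psi(\Theta(v_1)))=\psi(\Theta(\Rr(v_1)))$. Substituting, $\Er(\psi(\Theta(v)))=\psi(\Theta(\Rr(v')))\,\psi(\Theta(\Rr(v_1)))=\psi(\Theta(\Rr(v)))$, which is exactly what we wanted. The only mild obstacle is keeping the order of concatenation straight through the compositions of antimorphisms (each of $\Rr$, $\Theta$, $\Er$ flips, while $\psi$ does not), but once the base case is pinned down via Remark~\ref{rem:EPA2} the induction is a bookkeeping exercise.
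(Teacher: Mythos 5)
Your argument is, in substance, the paper's own: the authors note that $\psi\Theta\Rr$ and $\Er\psi\Theta$ are both antimorphisms, hence coincide everywhere once they coincide on the letters $0$ and $1$, and the letter case reduces to the observation that $\psi(\Theta(0))=\psi(01)=\Theta\big((\fa\Rr(\fa))^{k+h+1}\big)$ and $\psi(\Theta(1))=\psi(10)=\Theta\big((\Rr(\fa)\fa)^{k+h+1}\big)$ are $\Theta$-images of palindromes and hence antipalindromes fixed by $\Er$. Your base case lands exactly there (after a detour through an abandoned miscalculation; note that $\psi(01)=\Theta\big((\fa\Rr(\fa))^{k+h+1}\big)$, not $\Theta\big(\fa(\Rr(\fa)\fa)^{k+h+1}\big)$), and your induction on $|v|$ is just the hands-on version of the statement that two antimorphisms agreeing on letters agree on all words.

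There is, however, a concrete error to fix: $\Theta$ is the Thue--Morse \emph{morphism}, not an antimorphism; it satisfies $\Theta(uv)=\Theta(u)\Theta(v)$ and does not reverse concatenation. Consequently your intermediate equation $\Theta(\Rr(v))=\Theta(\Rr(v_1))\Theta(\Rr(v'))$ is false in general: since $\Rr(v)=\Rr(v')\Rr(v_1)$, the correct identity is $\Theta(\Rr(v))=\Theta(\Rr(v'))\Theta(\Rr(v_1))$, whence $\psi(\Theta(\Rr(v)))=\psi(\Theta(\Rr(v')))\,\psi(\Theta(\Rr(v_1)))$. Your final displayed chain happens to use this correct order (thereby contradicting your own earlier line), so the induction does close; but as written the step contains two mutually inconsistent factorizations of $\psi(\Theta(\Rr(v)))$. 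Remove the claim that $\Theta$ reverses concatenation and the proof is correct and essentially identical to the paper's.
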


\begin{proof}
  Since both $\psi\Theta\Rr$ and $\Er\psi\Theta$ are antimorphisms it is enough to prove the formula
  just for the letters $0,1$. We have
  \begin{align*}
      \psi(\Theta(0)) &= \psi(01) =
      \Theta\big(\underbrace{(\fa\Rr({\fa}))^{k+h+1}}_{\text{palindrome}}) =
      \Theta\big(\Rr(\fa\Rr({\fa}))^{k+h+1}\big) = \\ &= \Er\Theta\big((\fa\Rr({\fa}))^{k+h+1}\big)
      = \Er(\psi(01)) = \Er(\psi(\Theta(0))), \intertext{and} \psi(\Theta(1)) &= \psi(10) =
      \Theta\big(\underbrace{(\Rr({\fa})\fa)^{k+h+1}}_{\text{palindrome}}) =
      \Theta\big(\Rr(\Rr({\fa})\fa)^{k+h+1}\big) = \\ &= \Er\Theta\big((\Rr({\fa})\fa)^{k+h+1}\big)
      = \Er(\psi(10)) = \Er(\psi(\Theta(1))),
  \end{align*}
  where we used the fact that $\Theta\Rr=\Er\Theta$.
\end{proof}

\begin{lem}\label{lem:type2_lem2}
  Let $\psi$ be a morphism in class $\Ad$ and let $v\in\{0,1\}^*$ be such that $\Theta(v)$ is an
  antipalindrome. Then $\psi(\Theta(v))$ is an antipalindrome.
\end{lem}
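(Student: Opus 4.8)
The plan is to reduce the statement to the behaviour of $\Er$ under $\psi$ using Lemma~\ref{lem:type2_lem1}. The key observation is that $\Theta$ is a bijection onto its image, so if $\Theta(v)$ is an antipalindrome, then by Observation~\ref{obs}(v) the word $v$ is a palindrome, i.e.\ $\Rr(v)=v$. I would then simply compute
\[
\Er\big(\psi(\Theta(v))\big) = \psi\big(\Theta(\Rr(v))\big) = \psi\big(\Theta(v)\big),
\]
where the first equality is Lemma~\ref{lem:type2_lem1} and the second uses $\Rr(v)=v$. Hence $\psi(\Theta(v))$ is fixed by $\Er$, which is exactly the claim.

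The only point that deserves care is the justification that $\Theta(v)$ being an antipalindrome forces $v$ to be a palindrome. This is Observation~\ref{obs}(v), which is stated (and asserted to be easily checkable) in the preliminaries; alternatively it follows from Observation~\ref{obs}(iii), since $\Er\Theta(v)=\Theta(v)$ rewritten via $\Er\Theta=\Theta\Rr$ gives $\Theta(\Rr(v))=\Theta(v)$, and $\Theta$ is injective (it is a non-erasing morphism with $\Theta(0)=01$, $\Theta(1)=10$ having distinct first letters), so $\Rr(v)=v$. I would include this one-line injectivity remark to keep the argument self-contained.

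I do not anticipate a real obstacle here: the lemma is essentially a corollary of Lemma~\ref{lem:type2_lem1} combined with the elementary fact about $\Theta$ and $\Rr$. If anything, the subtle part is being precise about the domain on which these identities are applied --- Lemma~\ref{lem:type2_lem1} is an identity of maps $\{0,1\}^*\to\{0,1\}^*$, so it applies verbatim to the particular $v$ in question, and no extension to infinite words is needed. I would therefore keep the proof to three or four lines.
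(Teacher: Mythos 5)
Your proof is correct and follows essentially the same route as the paper: both reduce to the fact that $\Theta(v)$ antipalindromic forces $v$ to be a palindrome (Observation~\ref{obs}) and then apply Lemma~\ref{lem:type2_lem1} to conclude $\Er\big(\psi(\Theta(v))\big)=\psi(\Theta(v))$. The extra remark on the injectivity of $\Theta$ is a harmless bonus; no gaps.
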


\begin{proof}
  Recall from Observation~\ref{obs} that $\Theta(v)$ is an antipalindrome if and only if $v$ is a palindrome.
  Then
  $\Er(\psi(\Theta(v)))=\Er(\psi(\Theta(\Rr(v))))=\psi(\Theta(\Rr(\Rr(v)))) =
  \psi(\Theta(v))$, where the pen\-ultimate equality follows from Lemma~\ref{lem:type2_lem1}.
\end{proof}

\begin{proof}[Proof of Proposition~\ref{thm:type2_is_antipalindromic}]
  Obviously, the fixed point $\bu$ of $\psi$ contains either the antipalindrome $01$ or $10$. By
  repeated application of Lemma~\ref{lem:type2_lem2} on it, we get increasingly longer
  antipalindromes in $\Lc(\bu)$.
\end{proof}

As in the case of morphisms in class $\Aj$, we give a necessary and sufficient condition for a
morphism in class $\Ad$ to have a fixed point with arbitrarily long palindromes.

\begin{prop}\label{thm:form_of_type2_with_palindromic_fixedp}
  Let $\psi$ be a morphism in class $\Ad$ and let $\bu$ be its aperiodic fixed point. Then $\bu$ is
  palindromic if and only if $\fa$ from Definition~\ref{df:type2} is an antipalindrome.
\end{prop}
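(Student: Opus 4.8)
The plan is to pin down exactly when the words $\psi(0),\psi(1)$ are palindromes and then feed this into Tan's characterization of palindromic fixed points. Throughout write $g:=\Theta(\fa)$. Part iii) of Observation~\ref{obs} gives $\Theta(\Rr(\fa))=\Er(\Theta(\fa))=\Er(g)$, hence
$\psi(0)=\Theta\big(\fa(\Rr(\fa)\fa)^k\big)=(g\Er(g))^k g$ and $\psi(1)=\Theta\big((\Rr(\fa)\fa)^h\Rr(\fa)\big)=\Er(g)(g\Er(g))^h$.
Note that $g\Er(g)$ and $\Er(g)g$ are antipalindromes (any word $w\Er(w)$ is), and $g\neq\varepsilon$. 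For the implication ``$\fa$ antipalindrome $\Rightarrow$ $\bu$ palindromic'': if $\Er(\fa)=\fa$ then by parts iv) and i) of Observation~\ref{obs} both $g$ and $\Er(g)$ are palindromes, and a one-line computation with $\Rr$ then gives $\Rr(\psi(0))=\psi(0)$ and $\Rr(\psi(1))=\psi(1)$. Since a morphism whose letter images are palindromes sends palindromes to palindromes, and $\psi$ is primitive with $|\psi(a)|\ge2$, the sequence $0,\psi(0),\psi^2(0),\dots$ consists of ever longer palindromes, all lying in $\Lc(\bu)=\Lc(\psi(\bu))$; hence $\bu$ is palindromic (alternatively $\psi\in\Pc$ and one invokes \cite{tan-tcs-389}).

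For the converse, assume $\bu$ is aperiodic and palindromic. First, aperiodicity forces $\fa$ to contain both letters, for otherwise $\psi(0),\psi(1)$ are both powers of $01$ (or both powers of $10$) and $\bu$ is purely periodic. Next, $\mathrm{fst}(g)=\mathrm{fst}(\fa)$ and $\mathrm{lst}(g)=1-\mathrm{lst}(\fa)$, so $\psi(0)$ and $\psi(1)$ begin with different letters and end with different letters; thus they share no nonempty prefix or suffix, whence $\psi=\psi_L=\psi_R$, and the same holds for $\psi^2$ because $a\mapsto\mathrm{fst}(\psi(a))$ and $a\mapsto\mathrm{lst}(\psi(a))$ are bijections of $\{0,1\}$. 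Applying Proposition~\ref{p:inverse_morphism} to $\psi$ and to $\psi^2$ (both binary and acyclic, their fixed point being aperiodic), each is conjugate to a morphism in $\Pc$ if and only if its two letter images are palindromes. By Tan's theorem $\bu$ palindromic implies that $\psi$ or $\psi^2$ is conjugate to a morphism in $\Pc$; and since $\psi(0),\psi(1)$ palindromes already implies $\psi^2(0),\psi^2(1)$ palindromes, in all cases $\psi^2(0)$ is a palindrome. So it remains to prove: $\psi^2(0)$ a palindrome $\Rightarrow$ $\fa$ an antipalindrome.

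For this last step, the key identity is that $\psi\Theta$ acts on letters by $\psi\Theta(0)=\psi(01)=(g\Er(g))^{N}$ and $\psi\Theta(1)=\psi(10)=(\Er(g)g)^{N}$, where $N=k+h+1$. Since $\psi(0)=\Theta\big(\fa(\Rr(\fa)\fa)^k\big)$, writing $v:=\fa(\Rr(\fa)\fa)^k=v_1\cdots v_m\in\{0,1\}^*$ we get $\psi^2(0)=\psi\Theta(v)=Z_{v_1}\cdots Z_{v_m}$ with $Z_0=(g\Er(g))^N$, $Z_1=(\Er(g)g)^N$, all blocks of the common length $2N|g|$. Equating $\psi^2(0)$ with its reversal block by block, and then comparing primitive roots, forces $X_{v_j}=\Rr(X_{v_{m+1-j}})$ for every $j$, where $X_0=g\Er(g)$ and $X_1=\Er(g)g$. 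Because $v$ contains the letter $0$, this gives $\Rr(X_0)\in\{X_0,X_1\}$; the option $\Rr(X_0)=X_0$ is impossible since $X_0$ is a nonempty antipalindrome and the only word that is both a palindrome and an antipalindrome is $\varepsilon$; hence $\Rr(X_0)=X_1$, i.e.\ $\Rr(\Er(g))\Rr(g)=\Er(g)g$, and comparing first halves yields $\Rr(\Er(g))=\Er(g)$. Thus $\Er(g)$, and hence $g=\Theta(\fa)$, is a palindrome, so by part iv) of Observation~\ref{obs} the word $\fa$ is an antipalindrome, completing the proof.

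The main obstacle is the third paragraph: deriving the rigid conclusion ``$g$ is a palindrome'' from the single hypothesis that $\psi^2(0)$ is a palindrome, without circular reasoning. What makes it work is the observation that $\psi\Theta$ sends $0,1$ to $N$-th powers of the two \emph{conjugate antipalindromes} $g\Er(g)$ and $\Er(g)g$, which converts the palindrome condition into the finitary word equations $X_{v_j}=\Rr(X_{v_{m+1-j}})$; the impossibility of $\Rr(X_0)=X_0$ then follows purely from the palindrome/antipalindrome dichotomy, and this is precisely what excludes non-antipalindromic $\fa$. Minor points still requiring a remark are the degenerate parameter values ($\fa$ constant, which is ruled out by aperiodicity, and $k=0$ or $h=0$, for which $N\ge1$ still holds and all formulas above remain valid).
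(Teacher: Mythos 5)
Your reduction in the second paragraph rests on a computational error that creates a genuine gap. You claim $\mathrm{fst}(\Er(g))=1-\mathrm{fst}(g)$, but $\Er$ is an \emph{antimorphism}: $\Er(g_1\cdots g_n)=(1-g_n)\cdots(1-g_1)$, so in fact $\mathrm{fst}(\Er(g))=1-\mathrm{lst}(g)$. Since $\mathrm{lst}(g)=\mathrm{lst}(\Theta(\fa))=1-\mathrm{lst}(\fa)$, one gets $\mathrm{fst}(\psi(1))=\mathrm{fst}(\Er(g))=\mathrm{lst}(\fa)$ while $\mathrm{fst}(\psi(0))=\mathrm{fst}(\fa)$; these differ only when $\mathrm{fst}(\fa)\neq\mathrm{lst}(\fa)$, and the analogous computation for last letters yields the same condition. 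So whenever $\fa$ begins and ends with the same letter (e.g.\ $\fa=0010$, where $\psi(0)=\Theta(0010\cdots)$ and $\psi(1)=\Theta(0100\cdots)$ both start with $01$ and both end with $01$), the identity $\psi=\psi_L=\psi_R$ fails, Proposition~\ref{p:inverse_morphism} no longer converts ``conjugate to a morphism in class $\Pc$'' into ``$\psi(a)$ (hence $\psi^2(a)$) are palindromes'', and your key intermediate claim that $\psi^2(0)$ is a palindrome is unsupported. This is precisely the case your proof must work hardest to exclude: an antipalindrome has distinct first and last letters, so the real content of the converse is to show that $\mathrm{fst}(\fa)=\mathrm{lst}(\fa)$ is incompatible with palindromicity of $\bu$ -- and your argument assumes this away.

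The paper closes exactly this gap by writing $\fa=xy\Rr(x)$ with $x$ the longest such prefix, computing $\psi_L$ and $\psi_R$ explicitly (rolling the common prefix $\Theta(x)$ and common suffix $\Theta(\Rr(x))$), and then reading off from $\Rr(\psi_L(a))=\psi_R(a)$ that $x=\varepsilon$ and $\Theta(y)$ is a palindrome. Your third paragraph -- decomposing $\psi^2(0)=\psi\Theta(v)$ into equal-length blocks that are powers of the conjugate antipalindromes $g\Er(g)$ and $\Er(g)g$, and using the palindrome/antipalindrome dichotomy -- is correct and would be a pleasant alternative to the paper's computation, but it only becomes applicable after the case $\mathrm{fst}(\fa)=\mathrm{lst}(\fa)$ has been eliminated. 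The forward implication and the remark that aperiodicity forces $\fa$ to contain both letters are fine as written.
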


\begin{proof}
  If $\fa$ is an antipalindrome, then by Observation~\ref{obs}, $\Theta(\fa)$ is a palindrome, and
  consequently, also $\psi(0)$ and $\psi(1)$ are palindromes. Therefore the morphism $\psi$ belongs
  to class $\Pc$, and the language $\Lc(\bu)$ contains infinitely many palindromes.

  Suppose on the other hand that $\bu$ is palindromic.  Let $\fa=xy\Rr(x)$, for some
  $x,y\in\{0,1\}^*$ and let $x$ be the longest prefix permitting to write $\fa$ in such
  form. Necessarily, $y\neq\varepsilon$, since otherwise $\psi(0)=\psi(1)$ and $\bu$ is
  periodic. Moreover, $\mathrm{fst}(y)\neq\mathrm{lst}(y)$.  Therefore for the leftmost and
  rightmost conjugate to $\psi$ we have
  \begin{align*}
    \psi_L (0) &=
    \Theta\big(y\Rr(x)(x\Rr(y)\Rr(x)xy\Rr(x))^k\big)\Theta(x),\\
    \psi_L (1) &=
    \Theta\big(\Rr(y)\Rr(x)(xy\Rr(x)x\Rr(y)\Rr(x))^h\big)
    \Theta(x),\\
    \psi_R (0) &=
    \Theta(\Rr(x))\Theta\big((xy\Rr(x)x\Rr(y)\Rr(x))^kxy\big),\\
    \psi_R (1) &=
    \Theta(\Rr(x))
    \Theta\big((x\Rr(y)\Rr(x)xy\Rr(x))^hx\Rr(y)\big).
  \end{align*}
  Since $\bu$ is palindromic, by Proposition~\ref{p:inverse_morphism}, we have
  $\Rr\big(\psi_L(a)\big)=\psi_R(a)$ for $a=0,1$. Comparing with the above we obtain that
  $\Rr\Theta(x)=\Theta\Rr(x)=\Er\Theta(x)$ and thus $x=\varepsilon$. Further, we have
  $\Rr\Theta(y)=\Theta(y)$, which means that $\Theta(y)$ is a palindrome and thus (again by
  Observation~\ref{obs}) $y=\fa$ is an antipalindrome.
\end{proof}

The following proposition shows that changing the parameters $k,h$ in the definition of a morphism
$\psi$ in class $\Ad$ while keeping their sum $k+h$ fixed does not change the fixed points.

\begin{prop}\label{p:notrigid}
  Let $\psi$ be a morphism in class $\Ad$ with $k+h\geq 1$. Let $\xi:\{0,1\}^*\rightarrow\{0,1\}^*$ be
  given by
  \[
  \xi(0) = \Theta(\fa) \qquad\text{and}\qquad
  \xi(1) = \Theta\big((\Rr(\fa)\fa)^{k+h}\Rr(\fa)\big).
  \]
  Then $\bu$ is a fixed point of $\psi$ if and only if it is a fixed point of $\xi$.
\end{prop}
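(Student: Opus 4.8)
The plan is to show that $\psi$ and $\xi$ have the same set of fixed points by proving that they generate the same morphic language and share the same fixed-point prefix, which forces equality of the (unique, since both morphisms are primitive and non-trivial) fixed points. Concretely, I would argue via the intermediate morphism $\xi$ by iterating the "transfer" of one block $\Rr(\fa)\fa$ from the image of $1$ to the image of $0$. The key algebraic observation is that for the morphism $\psi$ with parameters $(k,h)$ one has, writing $b=\Theta(\Rr(\fa)\fa)$ and using that $\Theta$ is a morphism,
\[
  \psi(0)=\Theta(\fa)\,b^{k},\qquad \psi(1)=b^{h}\Theta(\Rr(\fa)),
\]
and the product $\psi(01)=\Theta(\fa)\,b^{k+h}\,\Theta(\Rr(\fa))=\Theta(\fa\,(\Rr(\fa)\fa)^{k+h}\,\Rr(\fa))$ depends only on $k+h$. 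So $\psi$ and $\xi$ agree on the word $01$ (indeed $\xi(01)=\Theta(\fa)\,b^{k+h}\,\Theta(\Rr(\fa))$ as well), and likewise on $10$. This already pins down the fixed point up to verifying that one of them actually is a fixed point of the other.

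First I would observe that $\psi$ is a substitution: its image of the letter $\mathrm{fst}(\Theta(\fa))$, say $c$, begins with $c$ (because $\Theta(\fa)$ is a prefix of both $\psi(0)$ and $\psi(1)$, and $\Theta(\fa)$ begins with $c$ by Definition~\ref{df:type2} applied to the first letter of $\fa$). Hence $\psi$ has a unique fixed point $\bu$ starting with $c$; by primitivity (noted after Definition~\ref{df:type2}) this is in fact the only fixed point of $\psi$. The same reasoning applies to $\xi$, whose images of both letters also start with $\Theta(\fa)$, hence with $c$; so $\xi$ has a unique fixed point $\boldsymbol{v}$. It therefore suffices to show $\bu=\boldsymbol{v}$, and for that it suffices to show $\xi(\bu)=\bu$ (then $\bu$ is a fixed point of $\xi$, which is unique, so $\bu=\boldsymbol{v}$, and symmetrically).

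To show $\xi(\bu)=\bu$, I would use that $\bu=\psi(\bu)$ and compare $\xi$ and $\psi$ on blocks. The clean way: since $\bu$ is recurrent and contains both $0$ and $1$, and $\psi(0)$ starts with $\Theta(\fa)$ and ends with $b=\Theta(\Rr(\fa)\fa)$ when $k\ge 1$ (resp.\ with $\Theta(\fa)$ when $k=0$), one checks that $\bu$ has a natural factorization; but the slicker argument is to verify directly that $\xi\circ\sigma=\sigma'\circ\psi$ for suitable recodings, or more simply to compute $\xi^2$ versus $\psi^2$ restricted to prefixes. In fact the most economical route is: show both $\psi$ and $\xi$ have the same value on every word in $\{01,10\}^*$ — because $\xi(01)=\psi(01)=\Theta(\fa(\Rr(\fa)\fa)^{k+h}\Rr(\fa))$ and $\xi(10)=\psi(10)=\Theta((\Rr(\fa)\fa)^{k+h}\Rr(\fa)\fa)$ — and then note that since $k+h\ge 1$ the fixed point $\bu$, being uniformly recurrent and starting with $\Theta(\fa)$ (a $\Theta$-image), actually lies in $\Theta(\{0,1\}^{\N})$, i.e.\ $\bu=\Theta(\boldsymbol{w})$ for some $\boldsymbol{w}$; equivalently $\bu$ is a concatenation of the blocks $01$ and $10$. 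Granting this, $\psi(\bu)$ and $\xi(\bu)$ are both obtained by applying the (identical) block maps $01\mapsto\Theta(\fa(\Rr(\fa)\fa)^{k+h}\Rr(\fa))$, $10\mapsto\Theta((\Rr(\fa)\fa)^{k+h}\Rr(\fa)\fa)$ to the block decomposition of $\bu$, hence $\xi(\bu)=\psi(\bu)=\bu$; symmetrically $\psi(\boldsymbol{v})=\xi(\boldsymbol{v})=\boldsymbol{v}$, and uniqueness of fixed points finishes the proof.

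The main obstacle is the claim that every fixed point of $\psi$ (and of $\xi$) actually decomposes into the blocks $01$ and $10$, i.e.\ lies in $\Theta(\{0,1\}^{\N})$. This requires a synchronization/recognizability argument: one must rule out that applying $\psi$ could produce the blocks "out of phase." Here I would exploit that $\psi=\psi'\circ\Theta$ is not literally how $\psi$ is defined, but that $\psi(a)$ is a $\Theta$-image for each $a$ and, crucially, that $\Theta$-images of nonempty words are exactly the words that are concatenations of $01$ and $10$; since $\bu$ starts with the block $01$ or $10$ (as $\fst(\Theta(\fa))\ne\lst(\Theta(\fa))$ forces the leading block to be well-defined) and $\bu=\psi^n(c)$ for the starting letter $c$ with each $\psi^n(c)\in\Theta(\{0,1\}^*)$, the limit $\bu$ inherits the block structure. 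Making "the limit inherits the block structure" precise — that the block decompositions of the prefixes $\psi^n(c)$ are nested — is the one place where a short but genuine argument is needed; everything else is the routine algebra of $\Theta$ and geometric-series rearrangement indicated above.
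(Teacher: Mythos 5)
Your key identity $\xi(01)=\psi(01)$ and $\xi(10)=\psi(10)$, i.e.\ $\xi\Theta=\psi\Theta$, together with the plan of writing $\bu=\Theta(\boldsymbol{w})$ and computing $\xi(\bu)=\xi(\Theta(\boldsymbol{w}))=\psi(\Theta(\boldsymbol{w}))=\psi(\bu)=\bu$, is exactly the paper's proof. The one step you leave open, and describe as ``the main obstacle'' requiring a synchronization/recognizability argument, is in fact immediate and needs none of that machinery: by Definition~\ref{df:type2} each of $\psi(0)$, $\psi(1)$ is the $\Theta$-image of a finite word, so the hypothesis $\bu=\psi(\bu)=\psi(u_0)\psi(u_1)\psi(u_2)\cdots$ itself exhibits $\bu$ as $\Theta\big(w^{(0)}w^{(1)}w^{(2)}\cdots\big)$, where $\psi(u_i)=\Theta(w^{(i)})$. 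You only need the \emph{existence} of some $\Theta$-preimage of $\bu$, not uniqueness of its decomposition into blocks $01$, $10$, because $\xi\Theta$ and $\psi\Theta$ are equal as morphisms and therefore agree on every preimage; there is no ``out of phase'' issue to rule out and no limit of prefixes $\psi^n(c)$ to control. Two further remarks. First, the detour through uniqueness of fixed points is superfluous (your block computation already shows that an arbitrary fixed point of $\psi$ is fixed by $\xi$, which is all the statement asks), and the claim that primitivity forces a unique fixed point is false: $\Theta$ itself is primitive with two fixed points, and a morphism in class $\Ad$ likewise has two when $\fa$ begins with $0$ and ends with $1$. Second, the hypothesis $k+h\geq 1$ plays no role in the equivalence (for $k+h=0$ one simply has $\xi=\psi$); it matters only for the non-rigidity conclusion drawn in Remark~\ref{rem:2}.
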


\begin{proof}
  Note that
  \begin{align*}
    \xi(\Theta(0)) &= \xi(01) = \psi(01) = \psi(\Theta(0)) \\
    \xi(\Theta(1)) &= \xi(10) = \psi(10) = \psi(\Theta(1)),
  \end{align*}
  and thus
  \begin{equation}\label{eq:psi_and_xi}
    \xi\Theta = \psi\Theta.
  \end{equation}
  Since $\bu=\psi(\bu)$ there is an infinite word $\boldsymbol{v}$ such that $\bu =
  \Theta(\boldsymbol{v})$.  Using~(\ref{eq:psi_and_xi}) we get
  \[
  \xi(\bu) = \xi(\Theta(\boldsymbol{v})) = \psi(\Theta(\boldsymbol{v})) = \psi(\bu) = \bu.
  \]
  The argumentation for the opposite implication is the same, interchanging the role of $\xi $ and
  $\psi$.
\end{proof}

\begin{rem}\label{rem:2}
  Proposition~\ref{p:notrigid} implies that a morphism $\xi$ in class $\Ad$ with $k+h\geq 1$ is not
  rigid. On the other hand, if $\psi$ is in class $\Ad$ with $k+h=0$ and $\fa=01$ then
  $\psi=\Theta^2$ is an iterate of the Thue-Morse morphism $\Theta$. It is shown
  in~\cite{pansiot-ipl-12} that its fixed points are rigid. For $k+h=0$ and $\fa\neq 01$, the
  situation is not clear.
\end{rem}

\section{Palindromes and antipalindromes in periodic words}\label{sec:periodic}

Let us first study properties of eventually periodic words which contain arbitrarily long
antipalindromes. The statements we show here are direct analogues of similar facts for palindromes,
namely Propositions~\ref{lem:period_in_evper_pal} and~\ref{lem:classP_evper_pal} which we cite
from~\cite{brlek-hamel-nivat-reutenauer-ijfcs-15,allouche-baake-cassaigne-damanik-tcs-292}.

\begin{prop}[\cite{brlek-hamel-nivat-reutenauer-ijfcs-15}]\label{lem:period_in_evper_pal}
  Let $\bu=vw^{\infty}$, where $w\neq\varepsilon$ be an eventually periodic palindromic word.  Then
  $w=pq$ where $p$, $q$ are palindromes.
\end{prop}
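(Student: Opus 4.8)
The plan is to prove the (only nontrivial) implication: if $\bu=vw^\infty$ with $w\neq\varepsilon$ contains arbitrarily long palindromes — equivalently infinitely many of them, since over a finite alphabet there are only finitely many words of each given length — then $w$ is a product of two palindromes.

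First I would pass from $\bu$ to the purely periodic word $w^\infty$. Every palindrome occurring in $\bu$ of length larger than $|v|$ already occurs inside the suffix $w^\infty$, so $w^\infty$ has palindromic factors of unbounded length, i.e.\ $w^\infty$ is itself palindromic. A purely periodic word is uniformly recurrent (every factor of length $n$ occurs in any window of $n+|w|-1$ consecutive positions), so the property recalled in Section~\ref{sec:prelim} — a uniformly recurrent palindromic word has a language closed under $\Rr$ — applies and gives that $\Lc(w^\infty)$ is closed under $\Rr$. Since $w\in\Lc(w^\infty)$ trivially, it follows that $\Rr(w)\in\Lc(w^\infty)$, that is, $\Rr(w)$ is a factor of $w^\infty$.

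Next I would extract a conjugacy relation. The word $\Rr(w)$ is a factor of $w^\infty=www\cdots$ of length exactly $|w|$, hence it occurs starting at some position $j\in\{0,1,\dots,|w|-1\}$; writing $w=xy$ with $|x|=j$ (so $x$ is the length-$j$ prefix and $y$ the corresponding suffix of $w$), the length-$|w|$ factor of $w^\infty$ starting at position $j$ is precisely $yx$, so $w=xy$ and $\Rr(w)=yx$. Combining this with $\Rr(w)=\Rr(xy)=\Rr(y)\Rr(x)$ yields $\Rr(y)\Rr(x)=yx$; comparing the suffixes of length $|x|$ of the two sides gives $\Rr(x)=x$, and cancelling $x$ on the right then gives $\Rr(y)=y$. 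Thus $x$ and $y$ are palindromes and $w=xy$ is the desired factorization (with $x=\varepsilon$ in the degenerate case $j=0$, where $w$ is already a palindrome).

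The hard part is really only the reduction in the second paragraph: one cannot invoke ``palindromic $\Rightarrow$ closed under $\Rr$'' for $\bu$ itself, because $\bu=vw^\infty$ need not be uniformly recurrent (e.g.\ $1\,0^\infty$); the argument must localize the long palindromes inside the periodic tail and work with $w^\infty$. After that, two short word equations and a single length comparison finish the proof, so neither the Fine-Wilf theorem nor Proposition~\ref{p:properties} is needed. (The converse, which makes the statement an equivalence as in the cited source, is the easy direction: if $w=pq$ with $p,q$ palindromes then $q(pq)^n=(qp)^nq$ is a palindromic factor of $w^\infty$ for every $n$.)
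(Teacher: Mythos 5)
Your proof is essentially correct, but note first that the paper does not actually prove this proposition --- it is quoted from the literature; the natural in-paper point of comparison is the proof of the antipalindromic analogue, Lemma~\ref{lem:period_in_evper_antipal}. There the authors take a long antipalindrome of the form $bw^kc$ with $b$ a proper suffix and $c$ a proper prefix of $w$, trim $|b|$ letters from both ends to obtain an antipalindrome $w^kw_1$ with $w_1$ a prefix of $w$, and read off the decomposition from $(w_1w_2)^kw_1=\Er(w_1)\big(\Er(w_2)\Er(w_1)\big)^k$. You instead pass to the purely periodic word $w^{\infty}$, invoke uniform recurrence to get closure of $\Lc(w^{\infty})$ under $\Rr$, identify $\Rr(w)$ with a conjugate $yx$ of $w=xy$, and solve $\Rr(y)\Rr(x)=yx$ by a length comparison. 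Both routes are elementary and of comparable depth; yours isolates the clean intermediate statement that $\Rr(w)$ is a conjugate of $w$, and correctly dispenses with Fine--Wilf.

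One step is stated too strongly: it is \emph{not} true that every palindrome of $\bu=vw^{\infty}$ of length larger than $|v|$ occurs inside the tail $w^{\infty}$. For $v=00$ and $w=01$ the word $\bu=0001010\cdots$ contains the palindrome $000$, of length $3>|v|$, which is not a factor of $(01)^{\infty}$. What you actually need --- that $w^{\infty}$ is palindromic --- still holds and is obtained by the standard trimming argument: if a palindrome $p$ of $\bu$ occurs at position $i<|v|$, then deleting $|v|-i$ letters from each end of $p$ leaves a central factor which is again a palindrome and occurs at position $|v|$, hence inside $w^{\infty}$; thus palindromes of $\bu$ of length greater than $n+2|v|$ yield palindromes of $w^{\infty}$ of length greater than $n$. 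With this one-line repair your proof is complete. (The paper's own proof of Lemma~\ref{lem:period_in_evper_antipal} silently assumes the same localization when it writes its long antipalindromes in the form $bw^kc$ with $b$ a suffix of $w$.)
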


\begin{prop}[\cite{allouche-baake-cassaigne-damanik-tcs-292}]\label{lem:classP_evper_pal}
  Let $\bu$ be an eventually periodic palindromic word.  If $\bu$ is recurrent then $\bu$ is a fixed
  point of a morphism in class $\Pc$.
\end{prop}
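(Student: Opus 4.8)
The plan is to reduce the statement to the purely periodic case and then read off a suitable morphism directly from Proposition~\ref{lem:period_in_evper_pal}. I would first dispose of the degenerate case in which $\bu$ uses only one letter $c$: there $\bu=c^{\infty}$ is fixed, over the one-letter alphabet $\{c\}$, by the class-$\Pc$ substitution $c\mapsto cc$, so from now on I assume that $\bu$ contains both letters.

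Next I would show that a recurrent eventually periodic word is in fact purely periodic. Writing $\bu=vw^{\infty}$, let $\ell$ be the least integer with $u_i=u_{i+m}$ for all $i\geq\ell$, where $m=|w|$ (such $\ell$ exists since $\ell\le|v|$ works). If $\ell\geq 1$, the factor $f=u_{\ell-1}u_{\ell}\cdots u_{\ell+m-1}$ of length $m+1$ occurs at position $\ell-1$, hence, by recurrence, also at some position $j\geq\ell$; comparing the two occurrences of $f$ and using periodicity at indices $\geq\ell$ gives $u_{\ell-1}=u_j=u_{j+m}=u_{\ell-1+m}$, so $u_i=u_{i+m}$ already for all $i\geq\ell-1$, contradicting the minimality of $\ell$. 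Thus $\ell=0$, i.e.\ $\bu=w^{\infty}$ where now $w:=u_0\cdots u_{m-1}$ is nonempty and contains both letters.

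Now Proposition~\ref{lem:period_in_evper_pal} applies to the eventually periodic palindromic word $\bu=w^{\infty}$ and yields palindromes $p,q$ with $w=pq$. I would then take the morphism $\psi:A^*\to A^*$ defined by $\psi(a):=w$ for every $a\in A$. This $\psi$ fixes $\bu$, since $\psi(\bu)=w^{\infty}=\bu$; it is primitive, since every letter of $A$ already occurs in $w=\psi(b)$ for each $b\in A$; and it lies in class $\Pc$ with common palindromic prefix $p$, because $\psi(a)=pq$ with $q$ a palindrome for every $a\in A$. Hence $\bu$ is a fixed point of a morphism in class $\Pc$. If one prefers a length-increasing morphism, one can equally use $\psi(a):=w^2=p(qpq)$, which still belongs to $\Pc$ because $qpq$ is a palindrome whenever $p,q$ are.

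I do not expect any serious obstacle: the whole argument rests on Proposition~\ref{lem:period_in_evper_pal}, and the two remaining ingredients — the reduction to pure periodicity and the verification that the constant morphism $a\mapsto w$ belongs to $\Pc$ — are routine. The only point requiring a little care is the implicit hypothesis that $\bu$ is genuinely binary (or, more generally, that the ambient alphabet contains no letter absent from $\bu$); otherwise no primitive morphism, and hence no morphism in $\Pc$ as defined here, can fix $\bu$.
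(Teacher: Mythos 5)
Your proof is correct. The paper does not prove this proposition (it is cited from the literature), but your argument — reduce to pure periodicity via recurrence, decompose the period as $w=pq$ with $p,q$ palindromes using Proposition~\ref{lem:period_in_evper_pal}, and take the constant morphism $a\mapsto w$ — is exactly the strategy the paper itself uses for the antipalindromic analogue (Lemma~\ref{lem:evper_antipal_fixed_by_Type1}), and you even supply the routine "recurrent eventually periodic implies purely periodic" step that the paper leaves unproved.
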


We prove the analogues of the above two results for antipalindromic words as
Lemmas~\ref{lem:period_in_evper_antipal} and~\ref{lem:evper_antipal_fixed_by_Type1}.
Lemma~\ref{lem:period_in_evper_antipal} has been also proven in~\cite{labbe-memoire}.

\begin{lem}\label{lem:period_in_evper_antipal}
  Let $\bu=vw^{\infty}$, where $w\neq\varepsilon$, be an eventually periodic antipalindromic
  word. Then $w=w_1w_2$ where $w_1,w_2$ are antipalindromes.
\end{lem}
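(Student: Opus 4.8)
The plan is to mimic the proof of Proposition~\ref{lem:period_in_evper_pal} (the palindromic analogue), but using the antimorphism $\Er$ instead of $\Rr$. Since $\bu=vw^\infty$ contains infinitely many antipalindromes and these have bounded ``pre-period'' contributions, arbitrarily long antipalindromes must occur entirely inside the periodic tail $w^\infty$. So I first reduce to the purely periodic word $w^\infty$: there are arbitrarily long antipalindromic factors of $w^\infty$, in particular one of length a large multiple of $|w|$ whose occurrence starts at a position congruent to a fixed residue $r$ modulo $|w|$ (by pigeonhole on the starting positions of a suitable infinite family of long antipalindromes). Replacing $w$ by its conjugate $w'$ obtained by cyclic rotation by $r$, I may assume there is an antipalindrome of the form $z=w'^m$ for some large $m$ (an integer power of the period), or more carefully an antipalindrome that is a factor of $w'^\infty$ beginning at position $0$.

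Next, the key structural step: if $z$ is an antipalindrome and $z$ is a prefix of $w'^\infty$ with $|z| \ge 2|w'|$, I want to extract the decomposition $w'=w_1w_2$ with $w_1,w_2$ antipalindromes. Write $z = w_1 t$ where $w_1$ is the prefix of $z$ of length... — actually the cleanest route is: $\Er(z)=z$, and $z$ is a factor of the periodic word $w'^\infty$, so $z$ sits inside $w'^N$ for large $N$; since $\Er$ reverses, the center of the antipalindrome $z$ lies at some position $c$ in $w'^\infty$. The factor of $w'^\infty$ of length $2|w'|$ centered at $c$ is then itself an antipalindrome (it is a central factor of the longer antipalindrome $z$, provided $z$ is long enough, say $|z|\ge 2|w'|$), and it equals some conjugate $w''$ of $w'$ raised to interpret as: $w''$ is an antipalindrome of length $2|w'|$ which is a conjugate of $w'^2$. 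Thus $w'^2$ is conjugate to an antipalindrome. Now I apply a standard fact: if $x^2$ is conjugate to an antipalindrome, then $x$ itself is conjugate to a word of the form $w_1w_2$ with $w_1,w_2$ antipalindromes — this follows because an antipalindrome of even length $2|x|$ that is a power-conjugate of $x^2$ can be centrally split, and each half is exchanged with the other under $\Er$; chasing the cyclic structure gives that the relevant conjugate $w''$ of $w'$ decomposes as $w''=w_1w_2$ with $\Er(w_1)=w_1$ and $\Er(w_2)=w_2$ (the two ``quarter'' words around the two fixed centers of the length-$2|x|$ antipalindrome). Finally, undo the conjugations: if $w''=w_1w_2$ works for a conjugate of $w$ of the form $w_2 w_1 \cdots$, note that $w$ itself (in the original word) is then $w_2' w_1'$ for antipalindromes — but since we only need \emph{some} splitting $w=w_1w_2$ and the statement about $w$ is invariant under the allowed manipulations (we may replace $w$ by any conjugate because $vw^\infty$ and $v'w'^\infty$ have the same set of antipalindromic factors up to a shift), the conclusion transfers back to the original $w$.

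Concretely, the argument I expect to write has the following steps, in order. (i) Show long antipalindromes live in the tail, hence $w^\infty$ is antipalindromic; replace $\bu$ by $w^\infty$. (ii) By pigeonhole pick an antipalindrome $z$ with $|z|\ge 2|w|$ occurring at a position $\equiv r \pmod{|w|}$; rotate $w$ to $\tilde w$ so $z$ is a prefix of $\tilde w^\infty$. (iii) Look at the central factor of $z$ of length $2|\tilde w|$; it is an antipalindrome, and it is a conjugate of $\tilde w^2$, so equals $y\tilde w y^{-1}$-type rotation, i.e.\ $\tilde w^2$ is conjugate to an antipalindrome $m$. (iv) The antipalindrome $m$ of length $2|\tilde w|$ satisfies $m=w_1w_2$ where $w_1$ is the prefix of $m$ of length $|\tilde w|$ shifted to be centered at a fixpoint of the reversal-complement involution — more precisely, both the prefix and suffix of $m$ of length $|\tilde w|$ are related, and choosing the split at the right place makes each half an antipalindrome; here I invoke the elementary computation that an even-length antipalindrome whose length is twice the period length of the underlying periodic word decomposes into two antipalindromes. (v) Conclude that a conjugate of $w$, hence (after undoing rotations, and since the statement ``$w=w_1w_2$, $w_i$ antipalindromes'' need only hold for the period up to the freedom already used) $w$ itself, is a product of two antipalindromes.

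The main obstacle is step (iv)/(v): making precise, with the correct bookkeeping of which conjugate and which center, that ``$w^2$ conjugate to an antipalindrome $\Rightarrow$ $w$ (the right conjugate) splits into two antipalindromes,'' and then transferring the splitting back to the originally given $w$ rather than merely a conjugate of it. I expect the cleanest handling is to prove directly, without passing through $w^2$: if $z$ is an antipalindrome of length $>2|w|$ occurring in $w^\infty$, then writing $z = p\, \tilde w^j\, s$ where $\tilde w$ is the conjugate of $w$ aligned to the occurrence, the overlap $z=\Er(z)$ forces $\tilde w = w_1 w_2$ with $\Er(w_1)=w_1$, $\Er(w_2)=w_2$ (because the antipalindromic symmetry of $z$ restricts to antipalindromic symmetry of the two ``boundary'' factors of length $|w_1|,|w_2|$ framing a full period copy), using Proposition~\ref{p:properties}(ii) to control the word equations that arise from the periodicity. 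Since any conjugate of $w$ of the form $w_1w_2$ with antipalindromic factors yields, by rotating $w_2$ past $w_1$ and using Observation~\ref{obs}, that $w$ itself equals $w_1'w_2'$ with $w_1',w_2'$ antipalindromes, the conclusion follows. I would double-check the small-$|w|$ and parity edge cases ($|w|$ odd would be impossible since then $w^\infty$ would be ... actually antipalindromes have even length but $w$ need not; this needs a remark) before finalizing.
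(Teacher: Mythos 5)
Your overall strategy (pass to the periodic tail, take the central length-$2|w|$ factor of a long antipalindrome, deduce that some conjugate of $w$ is an antipalindrome, then split) can be made to work, and the intermediate claims you assert are in fact true. But as written the proof has a genuine gap exactly where you flag it: steps (iv)--(v) are never carried out. The decisive missing fact is ``if a conjugate $sp$ of $w=ps$ is an antipalindrome, then $w$ itself is a product of two antipalindromes.'' This does \emph{not} follow from Observation~\ref{obs} or from ``rotating $w_2$ past $w_1$'' (that only handles the conjugates that cut at the boundary between $w_1$ and $w_2$, not those that cut inside them). It does follow from a short word-equation argument -- e.g.\ from $\Er(p)\Er(s)=sp$ with $|s|\le|p|$ one gets $\Er(p)=sq$ with $q=\Er(q)$, hence $w=ps=q\cdot\Er(s)s$, and $\Er(s)s$ is always an antipalindrome -- but you defer precisely this computation, so the proof is incomplete at its crux. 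Two smaller points: the pigeonhole in step (ii) is unnecessary (a single long antipalindrome suffices), and your parity worry evaporates once the argument is done, since the conclusion $w=w_1w_2$ with $|w_1|,|w_2|$ even forces $|w|$ even.

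The paper avoids all of this conjugacy bookkeeping with one alignment trick that you should adopt. A sufficiently long antipalindrome in $vw^\infty$ can be written as $bw^kc$ with $b$ a proper suffix and $c$ a proper prefix of $w$, say $|b|\le|c|$. Trimming $|b|$ letters from \emph{both} ends of an antipalindrome leaves an antipalindrome, so $w^kw_1$ is an antipalindrome, where $w_1$ is the prefix of $c$ (hence of $w$) of length $|c|-|b|$. Writing $w=w_1w_2$ and applying $\Er$ gives
\[
(w_1w_2)^kw_1=\Er\big((w_1w_2)^kw_1\big)=\Er(w_1)\big(\Er(w_2)\Er(w_1)\big)^k,
\]
and comparing the two block decompositions letter by letter yields $w_1=\Er(w_1)$ and $w_2=\Er(w_2)$ directly for the original $w$, with no need to pass through conjugates, through $w^2$, or through a transfer-back step. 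I recommend restructuring your proof along these lines; your current draft is a correct plan whose hardest lemma is left unproved.
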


\begin{proof}
  Since $\Lc(\bu)$ contains infinitely many antipalindromes it necessarily contains antipalindromes
  in the form $bw^kc$, where $k\geq 1$, and $b$ and $c$ is a proper suffix and a proper prefix of
  $w$, respectively. Without loss of generality we can assume that $|b|\leq|c|<|w|$. Let $w_1$ be
  the prefix of $c$ of length $|c|-|b|$. Then $w^kw_1$ is also an antipalindrome. Let us denote
  $w=w_1w_2$. We have
  \[
  (w_1w_2)^kw_1 = \Er\big((w_1w_2)^kw_1\big) = \Er(w_1)\big(\Er(w_2)\Er(w_1)\big)^k.
  \]
  Thus $w_1=\Er(w_1)$ and $w_2=\Er(w_2)$.
\end{proof}

\begin{lem}\label{lem:evper_antipal_fixed_by_Type1}
  Let $\bu$ be an eventually periodic antipalindromic word.  If $\bu$ is recurrent then it is a
  fixed point of a morphism in class $\Aj$.
\end{lem}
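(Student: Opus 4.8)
The plan is to start from the structural information we already have. Since $\bu$ is recurrent and eventually periodic, it is in fact purely periodic: $\bu=w^{\infty}$ for some primitive word $w$, and by Lemma~\ref{lem:period_in_evper_antipal} we may write $w=w_1w_2$ with $w_1,w_2$ antipalindromes. Note that $w^{\infty}=(w_2w_1)^{\infty}$ up to a shift, so both $w_1w_2$ and $w_2w_1$ generate the same periodic word; I will use this freedom. The goal is to exhibit a morphism $\varphi$ in class $\Aj$ — that is, $\varphi(0)=\pr\su$, $\varphi(1)=\Er(\pr)\su$ with $\su$ an antipalindrome and $\pr\neq\varepsilon$ — such that $\varphi(\bu)=\bu$.

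The key idea is that class $\Aj$ morphisms are well suited to produce a periodic antipalindromic word: if we could choose $\varphi$ with $\varphi(0)=\varphi(1)=w$ we would be done trivially, but that morphism is not in $\Aj$ unless $w$ itself has the form $\pr\su=\Er(\pr)\su$, i.e.\ $\pr=\Er(\pr)$; in general $\bu$ is a one-letter word or we must do something subtler. So the first case to dispatch is $\bu=0^\infty$ or $\bu=1^\infty$: here $\bu$ is fixed by $\varphi(0)=\varphi(1)=0$ (resp.\ $1$), which is in $\Aj$ with $\pr=0$ (resp.\ $1$) and $\su=\varepsilon$. For the genuinely non-trivial case, both letters occur in $w$. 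The strategy is then to look for a length-$|w|$ (uniform) morphism: set $\su$ to be the longest antipalindromic suffix common to the two images we want to build, and try $\pr$ so that $\pr\su$ equals a suitable cyclic shift of $w$. Concretely, since $w_1$ is an antipalindrome, write $w=w_1w_2$ and try $\varphi(0)=w_2w_1$, $\varphi(1)=\Er(w_2)w_1$; we need $\Er(w_2)w_1$ also to lie in the orbit, and indeed $w_1$ is an antipalindrome, so $\varphi$ has the form $\pr\su$ with $\su=w_1$ (an antipalindrome) and $\pr=w_2$, and $\varphi(1)=\Er(\pr)\su$. It remains to check $\varphi(\bu)=\bu$: we must verify that reading $\bu=(w_1w_2)^\infty$ and applying $\varphi$ letter-by-letter reproduces $\bu$. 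This forces a compatibility condition relating the letters of $w$ to $w_1,w_2$; the natural way to guarantee it is to choose $w_1,w_2$ (and the starting point of the period) so that the $i$-th letter of $w$ equals $0$ or $1$ according to whether $\varphi$ must output the block starting at position $i|w|$.

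The main obstacle — and the step that needs real care — is producing a morphism in $\Aj$ whose iteration actually fixes $\bu$ rather than merely having $\bu$ among its periodic points or producing a shift of $\bu$. Recurrence is what saves us: because $\bu$ is periodic and both letters occur, every proper suffix $b$ and proper prefix $c$ of $w$ appears inside $\bu$, and the antipalindrome-closure of $\Lc(\bu)$ forces strong symmetry of $w$. I expect that, pushing the argument of Lemma~\ref{lem:period_in_evper_antipal} one step further, one shows $w$ (up to cyclic shift) can be written as $w=\pr\su$ with $\su$ an antipalindrome, $\pr\neq\varepsilon$, \emph{and} $\Er(\pr)\su$ a cyclic shift of $w$ by $|w|$ — equivalently $\Er(\pr)\su\in\{\pr\su,\ \text{rotations}\}$; this last identity is exactly what makes $\varphi(0)=\pr\su$, $\varphi(1)=\Er(\pr)\su$ satisfy $\varphi(\bu)=\bu$. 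The verification that $\varphi$ fixes $\bu$ then reduces, via $|\varphi(w)|=|w|^2$ and periodicity, to checking finitely many positions, which is routine once the structural decomposition is in hand. So the bulk of the work is the combinatorial lemma extracting the decomposition $w=\pr\su$ with the rotation property from antipalindromicity plus recurrence; the rest is bookkeeping.
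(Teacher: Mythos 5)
You correctly reduce to $\bu=w^{\infty}$ with $w=w_1w_2$, $w_1,w_2$ antipalindromes, and you even identify the trivial candidate $\varphi(0)=\varphi(1)=w$ — but then you dismiss it on the grounds that it lies in $\Aj$ only if $\pr=\Er(\pr)$. That condition is exactly what Lemma~\ref{lem:period_in_evper_antipal} hands you for free: taking $\pr=w_1$ and $\su=w_2$, the word $\su$ is an antipalindrome as required, and since $w_1$ is an antipalindrome we have $\Er(\pr)\su=\Er(w_1)w_2=w_1w_2=\pr\su$, so the constant morphism $0\mapsto w$, $1\mapsto w$ is already of the form demanded by Definition~\ref{df:type1} and visibly fixes $w^{\infty}$. (The only fringe case is $w_1=\varepsilon$, where one instead takes $\pr=w$, $\su=\varepsilon$, noting that $w$ itself is then an antipalindrome.) This one-line observation is the paper's entire proof; your proposal misses it.

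The alternative route you sketch in its place does not close the gap. The morphism $\varphi(0)=w_2w_1$, $\varphi(1)=\Er(w_2)w_1$ applied to $\bu=(w_1w_2)^{\infty}$ produces a word beginning with $w_2w_1$ or $\Er(w_2)w_1$, neither of which need be a prefix of $\bu$, so at best you obtain a shift of $\bu$ rather than a fixed point — this is precisely the ``main obstacle'' you name, and your resolution of it (``I expect that, pushing the argument of Lemma~\ref{lem:period_in_evper_antipal} one step further, one shows \dots'') is an unproven assertion, not an argument. Since the whole difficulty evaporates once you notice $\Er(w_1)=w_1$, the proposal as written has a genuine gap at its central step, and the machinery of cyclic shifts and compatibility conditions it introduces is unnecessary.
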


\begin{proof}
  A recurrent eventually periodic word is necessarily periodic, i.e., $\bu=w^{\infty}$, where
  $w=w_1w_2$ by Lemma~\ref{lem:period_in_evper_antipal}. The word $\bu$ is obviously fixed by the
  morphism given by $0\mapsto w$ and $1\mapsto w$, which is in class $\Aj$ ($\pr=w_1$, $\su=w_2$).
\end{proof}

The following theorem puts together the above facts and gives a description of eventually periodic
words which are both palindromic and antipalindromic.

\begin{prop}
  Let $\bu$ be an eventually periodic word which contains infinitely many palindromes and
  antipalindromes. Then there exist a word $b\in\{0,1\}^*$ and a palindrome $c\in\{0,1\}^*$ such
  that $\bu=b\big(c\Er(c)\big)^{\infty}$.
\end{prop}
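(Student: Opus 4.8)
The plan is to combine the two periodicity results already established, namely that an eventually periodic palindromic word has a period word factoring as a product of two palindromes (Proposition~\ref{lem:period_in_evper_pal}) and that an eventually periodic antipalindromic word has a period word factoring as a product of two antipalindromes (Lemma~\ref{lem:period_in_evper_antipal}). First I would observe that since $\bu$ is eventually periodic it can be written $\bu = bw^{\infty}$ with $w \neq \varepsilon$ chosen to be the primitive period word, and both $b$ and $w$ can be taken as short as possible; after possibly rotating we may assume $w$ is the actual repeating block. The goal is to show $w$ (or a rotation of it that still gives the same tail) can be written as $c\Er(c)$ for some palindrome $c$, which immediately forces the length of $w$ to be even.

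Next I would apply the two structural facts to the same period word. Arbitrarily long palindromes in $\bu$ force, by Proposition~\ref{lem:period_in_evper_pal} (applied to a suitable suffix $w'^{\infty}$ of $\bu$ that is itself palindromic, or directly to a conjugate of $w$), that some conjugate $w' = p q$ of $w$ is a product of two palindromes $p,q$. Likewise, arbitrarily long antipalindromes force, by Lemma~\ref{lem:period_in_evper_antipal} applied to the appropriate conjugate, that some conjugate $w'' = w_1 w_2$ of $w$ is a product of two antipalindromes. The heart of the argument is that since all these are conjugates of the same primitive word $w$, we can align them: there is a single conjugate $\tilde w$ of $w$ that is simultaneously a product of two palindromes and — after a further fixed rotation — relates to a product of two antipalindromes. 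At this point I would invoke Proposition~\ref{p:properties}(iii): if $x,y$ are nonempty palindromes and $xy$ is an antipalindrome, then $x = (u\Er(u))^i u$ and $y = (\Er(u)u)^j \Er(u)$ for a palindrome $u$. Writing $\tilde w = xy$ with $x,y$ palindromes and arranging (via a rotation inside the periodic tail) that $\tilde w$ is an antipalindrome, this proposition gives $\tilde w = (u\Er(u))^{i}u\,(\Er(u)u)^{j}\Er(u) = (u\Er(u))^{i+j+1}$. Setting $c = u\Er(u)$ raised appropriately — more precisely, since $\tilde w^{\infty} = (u\Er(u))^{\infty}$, I can take the period to be $u\Er(u)$ itself, so with $c := u$ the tail of $\bu$ is $(u\Er(u))^{\infty} = \big(c\Er(c)\big)^{\infty}$ with $c = u$ a palindrome. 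Folding the finite discrepancy between this rotation and the original prefix back into $b$ yields $\bu = b\big(c\Er(c)\big)^{\infty}$ as claimed.

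The main obstacle I anticipate is the bookkeeping around conjugacy: Proposition~\ref{lem:period_in_evper_pal} and Lemma~\ref{lem:period_in_evper_antipal} each only say that \emph{some} factorization of \emph{some} conjugate works, and these conjugates need not a priori be the same one. The careful point is to show that because $w$ is primitive, the set of its conjugates is in bijection with $\{0,1,\dots,|w|-1\}$, and the positions at which long palindromes (resp. antipalindromes) are centered in $\bu$ pin down which conjugate is palindrome-structured (resp. antipalindrome-structured); one then argues these coincide up to a rotation that can be absorbed into $b$, so that a single $\tilde w$ serves both roles. Once that alignment is secured, the application of Proposition~\ref{p:properties}(iii) and the identification $c\Er(c)$ is essentially immediate, as is the observation that $|w|$ must be even. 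It may be cleanest to bypass the conjugacy juggling entirely by instead working with a long enough antipalindromic factor $z$ of $\bu$ lying inside the periodic tail: $z$ is then a factor of $w^{\infty}$ of length exceeding $2|w|$, hence $z = w_3 w^m w_4$ with $w = w_4 w_3$; writing $z$ as an antipalindrome and also locating a long palindrome inside the tail to split one copy of $w$ as a product of two palindromes, one gets directly an equation of the form $p q = $ antipalindrome with $p,q$ palindromes, and Proposition~\ref{p:properties}(iii) finishes it.
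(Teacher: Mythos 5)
Your proposal is correct and takes essentially the same route as the paper: both combine Lemma~\ref{lem:period_in_evper_antipal} and Proposition~\ref{lem:period_in_evper_pal} and finish with Proposition~\ref{p:properties}(iii) to identify the period as a power of $c\Er(c)$. The conjugacy alignment you worry about is dispatched in one line in the paper: writing $w=w'w''$ with $w',w''$ antipalindromes and $w'=\Er(f)f$, the rotated period $a=fw''\Er(f)$ is itself an antipalindrome, and Proposition~\ref{lem:period_in_evper_pal} applied to the new representation $\bu=v\Er(f)\,a^{\infty}$ factors this particular $a$ as a product of two palindromes, so no separate alignment argument is needed.
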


\begin{proof}
  If $\bu=vw^{\infty}$, where $w\neq\varepsilon$, then by Lemma~\ref{lem:period_in_evper_antipal} we
  have $w=w'w''$, where $w'$, $w''$ are antipalindromes. Thus $w'=\Er(f)f$ for some $f$ and
  $\bu=v\Er(f)\big(fw''\Er(f)\big)^{\infty}$. Therefore $\bu$ is of the form $\bu=ba^\infty$ where
  $a=fw''\Er(f)$ is an antipalindrome.  By Proposition~\ref{lem:period_in_evper_pal} $a$ can be
  written as a concatenation of two palindromes $a=pq$. Using item (iii) of
  Proposition~\ref{p:properties} we infer that there is a palindrome $c$ such that
  $a=\big(c\Er(c)\big)^k$ for some $k\in\N$.
\end{proof}


\section{Languages of aperiodic words containing infinitely many antipalindromes}\label{sec:language}

In this section we will demonstrate several properties of aperiodic antipalindromic words which will
be used in the proof of our main results.
Lemmas~\ref{lem:unifrec_closed_under_E},~\ref{lem:inf_word_of_antipals}
and~\ref{lem:bispecials_in_antipalin_word} are analogues of similar statements for palindromic
words.


\begin{lem}\label{lem:unifrec_closed_under_E}
  Let $\bu$ be a uniformly recurrent antipalindromic word. Then $\Lc(\bu)$ is closed under the
  antimorphism $\Er$.
\end{lem}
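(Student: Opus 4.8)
The plan is to show that every factor of $\bu$ of length $n$ has its $\Er$-image also appearing in $\bu$, by exploiting the fact that $\Lc(\bu)$ contains arbitrarily long antipalindromes together with uniform recurrence. First I would fix an arbitrary factor $v\in\Lc(\bu)$ with $|v|=n$, and recall that by uniform recurrence the recurrence function $r(n)$ is finite, so there is a length $N=r(n)$ such that every factor of length $n$ occurs in every window of length $N$. In particular, $v$ occurs inside every factor of $\bu$ of length $N$.

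Next I would invoke the antipalindromicity of $\bu$: since $\Lc(\bu)$ contains infinitely many antipalindromes, there is an antipalindrome $p\in\Lc(\bu)$ with $|p|\geq N$. By the previous paragraph, $v$ occurs in $p$, say $p=\alpha v\beta$ for some $\alpha,\beta\in\{0,1\}^*$. Applying $\Er$ and using that $\Er$ is an antimorphism with $\Er(p)=p$, we get
\[
p=\Er(p)=\Er(\beta)\,\Er(v)\,\Er(\alpha),
\]
so $\Er(v)$ is a factor of $p$. Since $p\in\Lc(\bu)$ and $\Lc(\bu)$ is factor-closed, $\Er(v)\in\Lc(\bu)$. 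As $v$ was arbitrary, $\Lc(\bu)$ is closed under $\Er$.

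The only mild subtlety — and the step I would be most careful about — is guaranteeing that a single antipalindrome $p$ is long enough to contain \emph{any} prescribed factor $v$ of length $n$; this is exactly where uniform recurrence is essential (mere recurrence would not suffice, since occurrences of $v$ could be spaced arbitrarily far apart and miss all long antipalindromes). Once the recurrence function bound $N=r(n)$ is in hand, the argument is immediate, since antipalindromes in $\Lc(\bu)$ are unbounded in length by hypothesis. No equations on words or bispecial-factor machinery is needed here; this lemma is purely a combinatorial consequence of uniform recurrence plus the $\Er$-invariance of long factors.
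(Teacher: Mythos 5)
Your proof is correct and follows essentially the same route as the paper's: take an antipalindrome of length at least $r(n)$, observe that it must contain the given factor $v$, and apply $\Er$ to see that $\Er(v)$ is also a factor of that antipalindrome. You merely spell out the final step (applying the antimorphism to the decomposition $p=\alpha v\beta$) more explicitly than the paper does.
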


\begin{proof}
  Since $\bu$ is uniformly recurrent there is a function $r:\N\to\N$ such that every factor of $\bu$
  of length $r(n)$ contains all the factors of $\bu$ of length $n$.  Let $w\in\Lc(\bu)$ and let
  $v\in\Lc(w)$ be an antipalindrome such that $|v|\geq r(|w|)$.  Then both $w$ and $\Er(w)$ are
  factors of $\bu$.
\end{proof}


\begin{lem}\label{lem:inf_word_of_antipals}
  Let $\bu$ be an infinite antipalindromic word. Then there is an infinite word $\boldsymbol{v}$
  such that $\Er(v)v\in\Lc(\bu)$ for every prefix $v$ of $\boldsymbol{v}$.
\end{lem}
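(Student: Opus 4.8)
The plan is to build the infinite word $\boldsymbol v$ as a limit of longer and longer prefixes, using a compactness/König's lemma argument on the set of "good" prefixes. Call a finite word $v$ \emph{good} if $\Er(v)v\in\Lc(\bu)$. First I would observe that $\varepsilon$ is good (the empty word trivially works), and that since $\bu$ is antipalindromic, $\Lc(\bu)$ contains antipalindromes of unbounded even length; each such antipalindrome $p$ of length $2m$ can be written $p=\Er(v)v$ with $|v|=m$, so $\Lc(\bu)$ contains arbitrarily long good words. The key structural fact is that the set of good words is closed under taking certain prefixes: if $\Er(v)v\in\Lc(\bu)$ and $v=v'v''$, then $\Er(v)v=\Er(v'')\Er(v')v'v''$ contains $\Er(v')v'$ as a factor (the central factor), hence $v'$ is also good. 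Thus every prefix of a good word is good.

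With these two observations the argument is the standard one: the good words, ordered by the prefix relation, form an infinite tree in which every node has finite degree (at most $|A|=2$ children) and which contains nodes of every depth (from the arbitrarily long antipalindromes). By König's lemma this tree has an infinite branch, i.e.\ there is an infinite word $\boldsymbol v=v_0v_1v_2\cdots$ such that every prefix $v_0\cdots v_{n-1}$ is good, which is exactly the assertion $\Er(v)v\in\Lc(\bu)$ for every prefix $v$ of $\boldsymbol v$. Concretely, I would phrase König's lemma inductively: having chosen a prefix $v$ of length $n$ that is good and is itself a prefix of infinitely many good words (true for $n=0$ by the paragraph above), one of the two extensions $v0$, $v1$ must again be a prefix of infinitely many good words, and by the prefix-closure property that extension is itself good; continue.

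The only mild subtlety—really the one point that needs care rather than an obstacle—is making sure the "central factor" extraction is done symmetrically so that prefixes, not suffixes, of $v$ are produced as good words, so that the tree grows in the right direction; writing $\Er(\,v'v''\,)=\Er(v'')\,\Er(v')$ and reading off $\Er(v')v'$ straddling the centre takes care of this. Everything else is routine. (Note that one does not even need uniform recurrence here, only that $\Lc(\bu)$ contains infinitely many antipalindromes; uniform recurrence is what was used in Lemma~\ref{lem:unifrec_closed_under_E}, and is not required for this statement.)
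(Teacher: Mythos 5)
Your proof is correct and is essentially the argument the paper gives: the paper also builds $\boldsymbol{v}$ letter by letter so that each prefix is a prefix of infinitely many $w$ with $\Er(w)w\in\Lc(\bu)$, which is your K\"onig's lemma step phrased inductively. The only difference is that you spell out the central-factor extraction $\Er(v'v'')v'v''=\Er(v'')\,\Er(v')v'\,v''$ showing prefixes of good words are good, a step the paper leaves implicit.
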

\begin{proof}
  Every antipalindrome is of the form $\Er(w)w$. Since $\bu$ contains infinitely many
  antipalindromes, there are an infinite number of words $w$ such that $\Er(w)w\in\Lc(\bu)$.  Define
  $\boldsymbol{v}=v_1v_2v_3\cdots$ in the following way: let $v_1$ be a letter which is a prefix of
  infinitely many $w$ such that $\Er(w)w\in\Lc(\bu)$; let $v_2$ be a letter for which $v_1v_2$ is a
  prefix of infinitely many $w$ such that $\Er(w)w\in\Lc(\bu)$; etc.
\end{proof}

\begin{rem}
  If $\bu$ is uniformly recurrent then the languages of $\bu$ and of the (both-sided) infinite word
  $\Er(\boldsymbol{v})\boldsymbol{v}$ coincide. Otherwise, we have
  $\Lc(\Er(\boldsymbol{v})\boldsymbol{v})\subset\Lc(\bu)$.
\end{rem}

The following lemma is an analogy of Lemma 25 of~\cite{labbe-pelantova-ejc-51} where the authors
study words containing infinitely many palindromes.

\begin{lem}\label{lem:bispecials_in_antipalin_word}
  Let $\bu$ be an aperiodic uniformly recurrent antipalindromic word. Then $\Lc(\bu)$ contains
  infinitely many antipalindromic bispecial factors.
\end{lem}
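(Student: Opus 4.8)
The plan is to mimic the proof strategy that works for palindromic bispecial factors (Lemma 25 of~\cite{labbe-pelantova-ejc-51}), adapting it via the closure of $\Lc(\bu)$ under $\Er$. First I would use Lemma~\ref{lem:unifrec_closed_under_E} to observe that $\Lc(\bu)$ is closed under $\Er$, and Lemma~\ref{lem:inf_word_of_antipals} to fix an infinite word $\boldsymbol{v}$ such that every prefix $v$ of $\boldsymbol{v}$ gives an antipalindrome $\Er(v)v\in\Lc(\bu)$. Since $\bu$ is aperiodic and uniformly recurrent, for every $n$ there is a left special factor of length $n$ in $\Lc(\bu)$; I would argue that among the antipalindromes $\Er(v)v$ there are arbitrarily long ones whose ``central part'' is left (equivalently, right) special, and then symmetrize using $\Er$-closure to obtain a genuinely bispecial antipalindrome.

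More concretely, the key step is this: take an antipalindrome $z=\Er(v)v\in\Lc(\bu)$ with $|v|$ large. If the longest antipalindromic suffix $s$ of the left special factors of $\bu$ stabilizes, one runs into contradiction with aperiodicity; so I would show that arbitrarily long antipalindromes occur which are \emph{extendable on the left by two letters}. Indeed, since $\bu$ is uniformly recurrent, any factor—in particular any antipalindrome $z$—occurs infinitely often, hence occurs inside a longer antipalindrome $\Er(v')v'$; choosing the occurrence appropriately and using that $\Er$ maps a left extension $az$ to a right extension $\Er(z)\Er(a)$, closure under $\Er$ converts left special antipalindromes into right special ones and vice versa. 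The cleanest route: let $w$ be a left special factor of length $n$; by uniform recurrence $w$ appears in some antipalindrome $\Er(v)v$ of length $\ge r(2n)$, and by a counting/pigeonhole argument on occurrences one finds an antipalindrome $z$ of length $\ge n$ that is itself left special; applying $\Er$ and using $\Er(z)=z$ shows $z$ is also right special, hence bispecial.

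The step I expect to be the main obstacle is ensuring that one actually obtains \emph{arbitrarily long} antipalindromic bispecial factors, not merely one. The difficulty is that a left special antipalindrome need not a priori be right special on the nose—the left extensions $0z,1z\in\Lc(\bu)$ give, under $\Er$, right extensions $\Er(z)0,\Er(z)1\in\Lc(\bu)$, and since $\Er(z)=z$ this is exactly right speciality, so in fact the implication ``left special antipalindrome $\Rightarrow$ bispecial'' is immediate once closure under $\Er$ is in hand. Thus the real work is producing, for each $n$, a left special antipalindrome of length at least $n$. I would do this by contradiction: if all sufficiently long antipalindromes in $\Lc(\bu)$ were \emph{not} left special, then beyond some bounded prefix each antipalindrome $\Er(v)v$ would have a forced unique left extension, and iterating the left-extension map together with $\Er$-closure would force $\bu$ to be eventually periodic (the left-infinite extension of the $\Er(\boldsymbol v)\boldsymbol v$ structure would be ultimately determined), contradicting aperiodicity. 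Fleshing out this periodicity argument—using Fine--Wilf (Theorem~\ref{thm:fine-wilf}) or Proposition~\ref{p:properties}(i) on the resulting repeated blocks—is where the care is needed, but it parallels the palindromic case closely.
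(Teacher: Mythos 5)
Your central observation --- that by Lemma~\ref{lem:unifrec_closed_under_E} a left special antipalindrome $z$ is automatically right special (the left extensions $az,bz\in\Lc(\bu)$ map under $\Er$ to $z\Er(a),z\Er(b)\in\Lc(\bu)$ because $\Er(z)=z$), hence bispecial --- is exactly the symmetry the paper exploits. But both of your proposed routes to the remaining step, producing arbitrarily long \emph{special} antipalindromes, have genuine gaps. The pigeonhole step is not an argument: a left special factor $w$ occurring inside a long antipalindrome $\Er(v)v$ does not make that antipalindrome, or any other, left special, and you do not say what is being counted. The contradiction argument also does not close: assuming that no antipalindrome of length at least $n_0$ is left special only tells you that each \emph{centered} factor $\Er(v)v$ of the two-sided word $\Er(\boldsymbol{v})\boldsymbol{v}$ has a unique one-letter extension on each side. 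That determines the two-sided word from its center, but it does not give any single factor of $\bu$ an infinite forced continuation, which is what one needs (together with recurrence) to contradict aperiodicity; note that non-left-speciality is not inherited by prefixes or by longer words, so the forcing does not propagate off the center. Aperiodicity guarantees left special factors of every length, but nothing forces these to be antipalindromes, so no contradiction arises.

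The paper sidesteps this by not asking $\Er(w)w$ itself to be special. Given the antipalindrome $f=\Er(w)w$ with $w$ a prefix of $\boldsymbol{v}$, aperiodicity together with uniform recurrence yields a \emph{minimal} word $e$ such that $fe$ is right special (otherwise $f$ would have an infinite forced tail at each of its occurrences and $\bu$ would be eventually periodic). Since each $fe'$, for $e'$ a proper prefix of $e$, has a unique right extension and the continuation of $\boldsymbol{v}$ supplies one, $we$ is again a prefix of $\boldsymbol{v}$, so $\Er(we)we=\Er(e)\Er(w)we\in\Lc(\bu)$. Applying $\Er$ to this right-extension structure shows that $\Er(e)\Er(w)w$ is the unique minimal left prolongation of $f$ to a left special factor; hence every occurrence of $f$ is preceded by $\Er(e)$ and followed by $e$, so both the left and the right speciality transfer to $\Er(e)\Er(w)we$, which is therefore an antipalindromic bispecial factor of length at least $|f|\geq N$. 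This one-sided minimal prolongation followed by $\Er$-symmetrization is the construction your sketch is missing.
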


\begin{proof}
  We use the fact that every factor $f$ of an aperiodic word $\bu$ which is not itself right special
  has unique minimal (right) prolongation into a right special factor of $\bu$, i.e., there is a
  unique word $e$ such that $fe$ is a right special in $\Lc(\bu)$ while $fe'$ is not right special
  for each proper prefix $e'$ of $e$. An analogous claim holds also for left special factors.

  For each $N\in\N$ we demonstrate how to construct an antipalindromic bispecial factor in
  $\Lc(\bu)$ of length greater than or equal to $N$. Let $w$ be a prefix of the infinite word
  $\boldsymbol{v}$ from Lemma~\ref{lem:inf_word_of_antipals} (recall that $\Er(w)w\in\Lc(\bu)$) such
  that $|\Er(w)w|\geq N$.  If $\Er(w)w$ is right special then it is left special too, since by
  Lemma~\ref{lem:unifrec_closed_under_E} the language $\Lc(\bu)$ is closed under $\Er$. Let
  $\Er(w)w$ be not a right special factor. Then there exists $e$ such that $\Er(w)we$ is right
  special. Recall that for every proper prefix $e'$ of $e$ the factor $\Er(w)we'$ has unique right
  extension, thus $we$ is also a prefix of $\boldsymbol{v}$.

  Since $\Lc(\bu)$ is closed under $\Er$, $\Er(e)\Er(w)w$ is unique left prolongation of $\Er(w)w$,
  and, moreover, it is a left special factor. Therefore the word $\Er(e)\Er(w)we$ is a bispecial
  factor in $\Lc(\bu)$.
\end{proof}

The last lemma of this section speaks about frequencies of letters in an antipalindromic word. This
result has no counterpart in palindromic words.  Here we assume that such frequencies exist, which
is not necessarily true even for uniformly recurrent words. Later on, this assumption will be
ensured by taking fixed points of primitive morphisms.

\begin{lem}\label{lem:freq_of_letters_in_binary_antipalindromic}
  Let $\bu$ be a binary infinite word containing infinitely many
  antipalindromes. Suppose that the frequency of a letter $a$ in $\bu$
  exists, namely
  \[
  \varrho(a) = \lim_{|w|\to\infty}\frac{|w|_a}{|w|}.
  \]
  Then the frequency of both letters in $\bu$ is equal to $1/2$.
\end{lem}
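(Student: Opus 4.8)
The plan is to exploit the defining property of an antipalindrome: if $v$ is an antipalindrome then $\Er(v)=v$, and since $\Er$ exchanges the two letters, every antipalindrome $v$ satisfies $|v|_0=|v|_1$, hence $|v|_0/|v|=1/2$ exactly. So the word $\bu$ contains arbitrarily long factors on which the ``local density'' of $0$ is precisely $1/2$. The assumed limit $\varrho(0)$ is taken over \emph{all} factors as $|w|\to\infty$; since there are factors of every sufficiently large even length with $|w|_0/|w|=1/2$, these must force the limit to be $1/2$. The only subtlety is making sure the limit, as written, genuinely ranges over a set of factors that includes arbitrarily long antipalindromes, so that the antipalindromic subsequence pins the value down.

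Concretely, I would argue as follows. First observe that for any antipalindrome $v$ we have $v=\Er(v)$, so applying $|\cdot|_0$ and using $|\Er(v)|_0=|v|_1$ gives $|v|_0=|v|_1$, and therefore $|v|_0/|v|=\tfrac12$ (note $v\neq\varepsilon$ here, and an antipalindrome has even length so $|v|\geq 2$). Next, since $\bu$ contains infinitely many antipalindromes, there is a sequence of antipalindromic factors $v^{(1)},v^{(2)},\dots\in\Lc(\bu)$ with $|v^{(n)}|\to\infty$. Because the limit defining $\varrho(0)$ exists over all factors of $\bu$ with length tending to infinity, in particular the subsequence $\big(|v^{(n)}|_0/|v^{(n)}|\big)_n$ converges to $\varrho(0)$; but every term of this subsequence equals $\tfrac12$, so $\varrho(0)=\tfrac12$. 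Finally $\varrho(1)=1-\varrho(0)=\tfrac12$ as well (or symmetrically, $\Er$ maps $\bu$'s antipalindromes to antipalindromes, so the same argument applies to the letter $1$).

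I do not expect any real obstacle here; the statement is essentially a one-line observation once one notes that antipalindromes are exactly the words with equal letter counts among words over a binary alphabet fixed by $\Er$. The only point requiring a sentence of care is the interpretation of the limit: the frequency $\varrho(a)$ is \emph{assumed} to exist as a limit over all factors of growing length, so any particular infinite family of factors of growing length — here the antipalindromic ones — must realise that limit, and since each realises the value $\tfrac12$ exactly, the limit is $\tfrac12$. If one preferred to avoid leaning on the precise meaning of the limit symbol, an equivalent route is: for each $\varepsilon>0$ there is $N$ so that $\big||w|_0/|w|-\varrho(0)\big|<\varepsilon$ for all $w\in\Lc(\bu)$ with $|w|\geq N$; choosing an antipalindrome $w$ of length $\geq N$ forces $|\tfrac12-\varrho(0)|<\varepsilon$, and letting $\varepsilon\to 0$ gives $\varrho(0)=\tfrac12$.
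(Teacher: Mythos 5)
Your argument is correct and is exactly the paper's proof: both observe that $\Er(w)=w$ forces $|w|_0=|w|_1$ for every antipalindromic factor $w$, and then evaluate the assumed limit along the subsequence of antipalindromic factors of growing length to conclude $\varrho(0)=\varrho(1)=\tfrac12$. Your closing $\varepsilon$--$N$ reformulation is a harmless elaboration of the same idea.
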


\begin{proof}
  Since the frequency of a letter $a$ in $\bu$ exists, it can be computed as corresponding limit of
  any subsequence of factors of $\bu$.  Let us consider a subsequence obtained by taking only
  antipalindromic factors $w$ of $\bu$. As $\Er(w)=w$, we have
  \[
  \frac{|w|_a}{|w|} = \frac{1}{2} \qquad \text{for $a=0,1$.}
  \]
  Therefore $\varrho(0)=\frac12=\varrho(1)$.
\end{proof}


\section{Antipalindromes in fixed points of morphisms}\label{sec:main}

In this section we present our main results that concern antipalindromic fixed points of primitive
morphisms.  The case of periodic words is covered by Lemma~\ref{lem:evper_antipal_fixed_by_Type1},
thus we focus on aperiodic fixed points.

The languages of infinite words fixed by mutually conjugated primitive morphisms coincide, cf.\ item
(iii) of Proposition~\ref{p:properties}. For a chosen morphism $\varphi$ with an aperiodic
antipalindromic fixed point we consider the rightmost and leftmost conjugate $\varphi_R$,
$\varphi_L$ and the relation between them, namely the conjugacy word $q\in\{0,1\}^*$ such that
\begin{equation}\label{eq:dovaleny_morfizmus}
  q\varphi_L(w) = \varphi_R(w)q \qquad \text{for every $w\in\{0,1\}^*$.}
\end{equation}

Crucial for the demonstration of our results is the description of antipalindromic bispecial factors
in fixed points of primitive morphisms derived in the sense of Theorem~\ref{thm:klouda}.  The
following lemma testifies about them and the conjugacy word $q$ of~\eqref{eq:dovaleny_morfizmus}.

\begin{lem}\label{lem:q_is_antipalindrome}
  Let $\varphi$ be a primitive binary morphism with an aperiodic antipalindromic fixed point $\bu$.
  Then the word $q$ from~(\ref{eq:dovaleny_morfizmus}) is an antipalindrome and $\Lc(\bu)$ contains
  infinitely many antipalindromes of the form $\varphi_R(w)q$ for some $w\in\Lc(\bu)$.

  If, moreover, $\bu$ contains infinitely many palindromes, then $q=\varepsilon$ and $\Lc(\bu)$
  contains infinitely many antipalindromes of the form $\varphi(w)$ for some $w\in\Lc(\bu)$.
\end{lem}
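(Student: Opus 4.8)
\textbf{Proof plan for Lemma~\ref{lem:q_is_antipalindrome}.}

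The plan is to leverage Theorem~\ref{thm:klouda}, which describes bispecial factors of $\bu$ via the map $\Phi(w)=\varphi_R(w)q$, together with Lemma~\ref{lem:bispecials_in_antipalin_word}, which guarantees infinitely many antipalindromic bispecial factors. First I would note that since $\bu$ is a fixed point of a primitive morphism it is uniformly recurrent (Proposition with item (i)), hence antipalindromic and uniformly recurrent, so Lemmas~\ref{lem:unifrec_closed_under_E} and~\ref{lem:bispecials_in_antipalin_word} apply: $\Lc(\bu)$ is closed under $\Er$ and contains infinitely many antipalindromic bispecial factors. By item (ii) of Theorem~\ref{thm:klouda}, each such bispecial factor (being long enough) is of the form $\Phi^k(w)$ for an initial bispecial $w$; since there are only finitely many initial bispecials, infinitely many of the antipalindromic bispecials are images under $\Phi$, i.e., of the form $\varphi_R(v)q$ with $v$ itself a bispecial factor in $\Lc(\bu)$. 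This already gives the second assertion of each part about the form of the antipalindromes, so the crux is to extract from "$\varphi_R(v)q$ is an antipalindrome for infinitely many $v$" the conclusion that $q$ itself is an antipalindrome.

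For that step I would argue as follows. Suppose $\varphi_R(v)q$ is an antipalindrome, so $\Er(\varphi_R(v)q)=\varphi_R(v)q$, i.e., $\Er(q)\,\Er(\varphi_R(v))=\varphi_R(v)q$. Since $\Er$ acts on $\Lc(\bu)$ and $\bu$ is uniformly recurrent, one should be able to compare this with the image of the antipalindrome $\Er(v)v$ — indeed $\Er(v)v\in\Lc(\bu)$ can be assumed (replacing $v$ by a longer antipalindromic bispecial if needed, using Lemma~\ref{lem:inf_word_of_antipals}), and applying $\varphi_R$ and dragging $q$ through via~\eqref{eq:dovaleny_morfizmus} relates $\varphi_R(\Er(v)v)q$ to $\varphi_R(\Er(v))\varphi_L(v)q$ or similar. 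The key identity to exploit is $\varphi_R = \Er\circ\varphi'\circ\Er$-type symmetry, or more directly: since $\Lc(\bu)$ is closed under $\Er$, both $\varphi_R$ and its "$\Er$-conjugate" generate the same language, and the leftmost/rightmost conjugates are determined by the language; this should force a rigidity constraint on $q$. Comparing lengths on both sides of $\Er(q)\,\Er(\varphi_R(v))=\varphi_R(v)q$ shows $|q|$ is forced, and by letting $|v|\to\infty$ the prefix/suffix relations (Proposition~\ref{p:properties}, items (i)–(ii)) pin down that $\Er(q)=q$.

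The second paragraph of the lemma I would handle by an analogous but cleaner argument: if $\bu$ also contains infinitely many palindromes, then it is both palindromic and antipalindromic and uniformly recurrent, so $\Lc(\bu)$ is closed under both $\Rr$ and $\Er$. Tan's result (cited before Definition of class $\Pc$) then says $\varphi$ or $\varphi^2$ is conjugate to a morphism in $\Pc$; more usefully, Proposition~\ref{p:inverse_morphism} gives $\Rr(\varphi_R(a))=\varphi_L(a)$. Combined with the antipalindromic structure (which by Observation~\ref{obs} interacts with $\Rr$ via $\Theta$), this should collapse the conjugacy word: from $q\varphi_L(w)=\varphi_R(w)q$ and $\Rr(\varphi_R(w))=\varphi_L(w)$ one gets $\Rr(q\varphi_L(w))=\Rr(\varphi_L(w))\Rr(q)=\varphi_R(w)\Rr(q)$, and since also $\varphi_R(w)q=q\varphi_L(w)$ one compares to deduce $q=\Rr(q)$; but $q$ is also an antipalindrome by the first part, and the only word that is both a palindrome and an antipalindrome is $\varepsilon$. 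Hence $q=\varepsilon$, which makes $\varphi_L=\varphi_R=\varphi$, and $\Phi(w)=\varphi(w)$, giving antipalindromes of the form $\varphi(w)$.

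\textbf{Main obstacle.} The delicate point is the passage from "$\varphi_R(v)q$ is an antipalindrome for infinitely many bispecial $v$" to "$q$ is an antipalindrome": one must make precise the claim that closure of $\Lc(\bu)$ under $\Er$ forces a symmetry on the rightmost conjugate $\varphi_R$ itself (not just on the language), and then carefully push $q$ through the conjugacy equation~\eqref{eq:dovaleny_morfizmus} while taking $|v|\to\infty$ so that the word equations of Proposition~\ref{p:properties} genuinely constrain $q$ rather than leaving it ambiguous up to a period. I expect this is where the real work lies; the rest is bookkeeping with Theorem~\ref{thm:klouda} and Observation~\ref{obs}.
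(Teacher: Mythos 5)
Your overall architecture is the paper's: Lemma~\ref{lem:bispecials_in_antipalin_word} supplies infinitely many antipalindromic bispecial factors, Theorem~\ref{thm:klouda} says every sufficiently long bispecial factor has the form $\varphi_R(w)q$, and the conjugacy equation~\eqref{eq:dovaleny_morfizmus} is the tool for extracting information about $q$. However, at the step you yourself flag as the ``main obstacle'' there is a genuine gap: you write down $\Er(q)\,\Er(\varphi_R(w))=\varphi_R(w)q$ and then only gesture at ``rigidity constraints,'' ``letting $|v|\to\infty$,'' and Proposition~\ref{p:properties}, none of which actually closes the argument. You are right that comparing prefixes of that equation as written does not work, because the length-$|q|$ prefix of the right-hand side is a prefix of $\varphi_R(w)$, not $q$. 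The missing move is a one-line substitution you already have in hand: replace the right-hand side $\varphi_R(w)q$ by $q\varphi_L(w)$ using~\eqref{eq:dovaleny_morfizmus}, obtaining
\[
  q\varphi_L(w)=\varphi_R(w)q=\Er\bigl(\varphi_R(w)q\bigr)=\Er(q)\,\Er\bigl(\varphi_R(w)\bigr),
\]
and now both sides have the same length, $|\varphi_L(w)|=|\varphi_R(w)|$, so the prefixes of length $|q|$ give $q=\Er(q)$ immediately. No limit, no word equations, and no symmetry of $\varphi_R$ itself is needed. Your alternative idea of comparing $\varphi_R$ with an ``$\Er$-conjugate'' morphism determined by the language is a detour that you would still have to make precise; as stated it is not a proof.

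For the second paragraph of the lemma your route differs from the paper's, which simply cites Lemma~20 of~\cite{labbe-pelantova-ejc-51} for the fact that $q$ is a palindrome when $\bu$ is palindromic. You instead derive $q=\Rr(q)$ directly from Proposition~\ref{p:inverse_morphism}: applying $\Rr$ to $q\varphi_L(a)=\varphi_R(a)q$ and using $\Rr(\varphi_R(a))=\varphi_L(a)$ shows that $\Rr(q)$ satisfies the same conjugacy equation as $q$, whence $q=\Rr(q)$ by uniqueness of the conjugacy word of a given length (both are prefixes of $\varphi_R(w^k)$ for large $k$). This is a legitimate self-contained alternative, though you should state the uniqueness step explicitly rather than saying ``one compares,'' and note that your identity $\Rr(\varphi_L(w))=\varphi_R(w)$ holds only for letters (for general $w$ it is $\varphi_R(\Rr(w))$), which is all you need. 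The conclusion $q=\varepsilon$ from $q$ being both a palindrome and an antipalindrome, and hence $\varphi=\varphi_L=\varphi_R$ and $\Phi(w)=\varphi(w)$, matches the paper.
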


\begin{proof}
  By Lemma~\ref{lem:bispecials_in_antipalin_word} the language $\Lc(\bu)$ contains infinitely many
  antipalindromic bispecial factors. By Theorem~\ref{thm:klouda}, any sufficiently long bispecial
  factor is of the form $\varphi_R(w)q$, where $w\in\Lc(\bu)$. Consider $w$ such that
  $\varphi_R(w)q$ is an antipalindrome.  Using~(\ref{eq:dovaleny_morfizmus}) we obtain
  \begin{equation}\label{eq:bispecials_w}
  q\varphi_L(w)=\varphi_R(w)q = \Er\big(\varphi_R(w)q\big) = \Er(q)\Er\big(\varphi_R(w)\big).
  \end{equation}
  Therefore $q=\Er(q)$.
  
  If $\bu$ contains also infinitely many palindromes, by Lemma 20
  from~\cite{labbe-pelantova-ejc-51}, $q$ is a palindrome. Altogether, $q=\varepsilon$ and
  consequently $\varphi_R=\varphi$. Thus $\varphi_R(w)q=\varphi(w)$.
\end{proof}

We are now in position to state our main results. They are separated into two theorems. The first
one of them, Theorem~\ref{thm:mainUnifrom}, states for uniform morphisms the opposite of
Proposition~\ref{thm:type1_is_antipalindromic}.

\begin{thm}\label{thm:mainUnifrom}
  Let $\bu$ be an aperiodic fixed point of a primitive binary uniform morphism $\varphi$ such that
  $\bu$ contains infinitely many antipalindromes. Then $\varphi$ or $\varphi^2$ is conjugated to a
  morphism in class $\Aj$.
\end{thm}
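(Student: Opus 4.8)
The strategy is to use the structure of antipalindromic bispecial factors from Lemma~\ref{lem:q_is_antipalindrome} together with uniformity of $\varphi$. By that lemma, the conjugacy word $q$ is an antipalindrome, and $\Lc(\bu)$ contains infinitely many antipalindromes of the form $\varphi_R(w)q$ with $w\in\Lc(\bu)$. Since $\varphi$ is uniform, say $|\varphi(0)|=|\varphi(1)|=\ell$, all its conjugates $\varphi_L$, $\varphi_R$ are uniform of the same length $\ell$, and the conjugacy word $q$ from~\eqref{eq:dovaleny_morfizmus} has length $|q|<\ell$. First I would exploit the identity $q\varphi_L(w)=\varphi_R(w)q=\Er(q)\Er(\varphi_R(w))$ from~\eqref{eq:bispecials_w}: reading this as an equation on words and using $q=\Er(q)$, we get $q\varphi_L(w)=q\,\Er(\varphi_R(w))$, hence $\varphi_L(w)=\Er(\varphi_R(w))$ for infinitely many $w$, and then for \emph{all} $w\in\{0,1\}^*$ by looking at long enough $w$ (or at $w$ and the letters appearing in the infinitely many bispecial $w$'s, using recurrence). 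In particular, applied to the letters, $\varphi_L(0)=\Er(\varphi_R(0))$ and $\varphi_L(1)=\Er(\varphi_R(1))$.

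\textbf{Key steps.} Next I would translate the conjugacy relation $\varphi_R(a)q=q\varphi_L(a)$ into a concrete description of $\varphi_L$ and $\varphi_R$. Writing $\varphi_R(a)=q s_a$ and $\varphi_L(a)=s_a q$ for some words $s_a$ with $|s_a|=\ell-|q|$ (this is the standard form of conjugacy via $q$; here $|q|<\ell$ so $q$ is a genuine prefix and suffix), the relation $\varphi_L(a)=\Er(\varphi_R(a))$ becomes $s_a q=\Er(q s_a)=\Er(s_a)\Er(q)=\Er(s_a)q$, whence $s_a=\Er(s_a)$, i.e.\ each $s_a$ is an antipalindrome (so $\ell-|q|$ is even). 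Now I need to pin down how $s_0$ and $s_1$ relate. The two letters must satisfy $\mathrm{fst}(\varphi_L(0))\neq\mathrm{fst}(\varphi_L(1))$ (the morphism is acyclic, hence marked, by aperiodicity of $\bu$), so $\mathrm{fst}(s_0)\neq\mathrm{fst}(s_1)$. The goal is to show $\varphi_L$ — or after passing to $\varphi^2$, the square — has the shape $\pr\su$, $\Er(\pr)\su$ required by Definition~\ref{df:type1}. I would try to extract a common antipalindromic suffix: write $s_a=\pr_a\su$ with $\su$ the longest common antipalindromic suffix of $s_0$ and $s_1$; one then argues using that $s_0,s_1$ are antipalindromes of equal length with distinct first letters that $\pr_1=\Er(\pr_0)$. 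This is where equations on words (Proposition~\ref{p:properties}, especially item (iii) on products of palindromes, or a direct cancellation argument on antipalindromes) come in: from $s_0=\Er(s_0)$, $s_1=\Er(s_1)$ and a maximality condition on $\su$, a short computation comparing prefixes of $s_0$ and $\Er(s_1)$ should force $\pr_0\su$ and $\Er(\pr_0)\su$, so that $\varphi_L$ (conjugate to $\varphi$) lies in class $\Aj$ with $\pr=\pr_0$. If the common suffix argument does not directly close — e.g.\ if the two images share no nontrivial antipalindromic suffix but the full morphism still fails to be in $\Aj$ on the nose — then passing to $\varphi^2$ and repeating the bispecial analysis for $\varphi^2$ (whose conjugates and conjugacy word $q'$ are again constrained by Lemma~\ref{lem:q_is_antipalindrome}, since $\bu$ is also fixed by $\varphi^2$) should give the needed room, which is exactly why the statement allows $\varphi$ \emph{or} $\varphi^2$.

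\textbf{Main obstacle.} The crux, and the step I expect to be the hardest, is the last one: deducing the rigid form $\varphi_L(0)=\pr\su$, $\varphi_L(1)=\Er(\pr)\su$ from the weaker data ``$\varphi_L(a)=\Er(\varphi_R(a))$, both $s_a$ antipalindromes of equal length with distinct first letters''. The relation $\varphi_L(a)=\Er(\varphi_R(a))$ couples each image with its own conjugate, not the two images with each other, so extracting the cross-relation $\pr_1=\Er(\pr_0)$ requires genuinely using markedness plus the combinatorics of antipalindromes, and it is precisely here that one may be forced to square the morphism. I would handle this by carefully tracking, via the conjugacy word $q$, how $\varphi$ itself (not just $\varphi_L$) decomposes: since $\varphi$ is conjugate to $\varphi_L\in\Aj$ by construction, and class $\Aj$ membership is not obviously conjugacy-invariant, I would either show directly that $\varphi_L\in\Aj$ already implies $\varphi$ or $\varphi^2$ conjugate to something in $\Aj$ (trivial, since $\varphi_L$ itself is such a conjugate), or — if the suffix-extraction only works after squaring — redo the argument with $\varphi^2$ from the start. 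Everything else (uniformity propagating to conjugates, $q=\Er(q)$, $s_a=\Er(s_a)$) is routine word-equation bookkeeping built on Lemmas~\ref{lem:bispecials_in_antipalin_word} and~\ref{lem:q_is_antipalindrome} and Theorem~\ref{thm:klouda}.
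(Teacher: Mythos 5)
Your opening reduction (acyclicity, markedness, the normalization absorbed into passing to $\varphi^2$, Lemma~\ref{lem:q_is_antipalindrome} giving $q=\Er(q)$ and antipalindromes of the form $\varphi_R(w)q$) matches the paper. But the step where you pass from the word equality $\varphi_L(w)=\Er\big(\varphi_R(w)\big)$ to the letter-wise relation $\varphi_L(a)=\Er\big(\varphi_R(a)\big)$ is wrong, and it is not a detail: $w\mapsto\Er(\varphi_R(w))$ is an \emph{antimorphism}, so writing $w=w_1\cdots w_n$ and matching blocks (legitimate precisely because $\varphi$ is uniform) gives $\varphi_L(w_j)=\Er\big(\varphi_R(w_{n+1-j})\big)$; comparing first letters via $\mathrm{fst}(\varphi_L(a))=a=\mathrm{lst}(\varphi_R(a))$ then forces $w_j=\Er(w_{n+1-j})$ and hence the \emph{cross}-relation $\varphi_L(\Er(a))=\Er\big(\varphi_R(a)\big)$, not the same-letter one. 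A concrete counterexample to your version: $\varphi(0)=001$, $\varphi(1)=101$ is in class $\Aj$ ($\pr=0$, $\su=01$), with $\varphi_L=\varphi$, $\varphi_R(0)=010$, $\varphi_R(1)=011$; here $\Er(\varphi_R(0))=101=\varphi_L(1)$, while $\varphi_L(0)=001\neq 101$.

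This error is exactly what creates the difficulty you flag as the ``main obstacle'': having coupled each image only with its own conjugate, you are left to manufacture the relation between $\varphi(0)$ and $\varphi(1)$ out of markedness and antipalindromicity of $s_0,s_1$, and your sketch of that step (``a short computation \dots should force \dots'') is not a proof. In the correct derivation no such step is needed: the cross-relation already ties the two images together. With $q=\varepsilon$ it reads $\varphi(1)=\Er(\varphi(0))$, i.e.\ $\varphi\in\Aj$ with $\su=\varepsilon$. With $q\neq\varepsilon$ it combines with the conjugacy relation into the single word equation $\varphi_R(1)\,q=q\,\Er\big(\varphi_R(0)\big)$, whose general solution $\varphi_R(1)=ef$, $\Er(\varphi_R(0))=fe$, $q=(ef)^ie$ (item (ii) of Proposition~\ref{p:properties}) is then analyzed: $i=0$ yields, after conjugating by $q=e$, a morphism in $\Aj$ with $\pr=\Er(f)$, $\su=e$; $i\geq1$ forces $e,f$ to be antipalindromes and hence $\varphi_R(1)=ef=\Er(fe)=\varphi_R(0)$, contradicting aperiodicity. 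Note also that your assumption $|q|<\ell$ is unjustified; the case $i\geq1$ above is precisely $|q|\geq\ell$, and it must be excluded by an argument, not by fiat.
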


\begin{proof}
  Since $\varphi$ has an aperiodic fixed point, it is acyclic. Therefore the leftmost and rightmost
  conjugates $\varphi_L$, $\varphi_R$ to the morphism $\varphi$ exist and $\varphi$ is
  marked. Without loss of generality we can assume for the first and last letters that
  \begin{equation}\label{eq:borders_of_varphiL_varphiR}
    \begin{aligned}
      \mathrm{fst}(\varphi_L(a)) &= a & & \quad \text{for $a=0,1$}, \\
      \mathrm{lst}(\varphi_R(a)) &= a & & \quad  \text{for $a=0,1$}. \\
    \end{aligned}
  \end{equation}
  Otherwise, we take $\varphi^2$ instead of $\varphi$.

  Let $w=w_1w_2\cdots w_n$ be a factor such that $\varphi_R(w)q$ is an antipalindrome, which exists
  by Lemma~\ref{lem:q_is_antipalindrome}.  Then, similarly as in~(\ref{eq:bispecials_w}) we have
  \[
  \varphi_L(w_1)\varphi_L(w_2)\cdots\varphi_L(w_n) =
  \Er\big(\varphi_R(w_n)\big)\Er\big(\varphi_R(w_{n-1})\big)\cdots\Er\big(\varphi_R(w_1)\big).
  \]
  From the equation~(\ref{eq:borders_of_varphiL_varphiR}) and from the property
  $|\varphi(0)|=|\varphi(1)|$ we see that
  \[
  w_1 = \Er(w_n), w_2 = \Er(w_{n-1}), \ldots
  \]
  and, consequently, for every letter $a$ we have
  \begin{equation}\label{eq:varphiL_varphiR_relation_on_letters}
    \varphi_L(\Er(a)) = \Er\big(\varphi_R(a)\big).
  \end{equation}

  There are two different cases based on the form of the conjugacy word $q$ from
  ~(\ref{eq:dovaleny_morfizmus}).  If $q$ is empty then $\varphi_L = \varphi_R = \varphi$,
  by~(\ref{eq:varphiL_varphiR_relation_on_letters}) it fulfills
  $\varphi(1)=\Er\big(\varphi(0)\big)$, and, therefore, $\varphi$ is in class $\Aj$.

  Let $q\neq\varepsilon$. Combining~(\ref{eq:dovaleny_morfizmus})
  and~(\ref{eq:varphiL_varphiR_relation_on_letters}) we get
  \begin{equation}\label{eq:varphiL_varphiR_equation}
    \varphi_R(1)q = q\varphi_L(1) = q\Er\big(\varphi_R(0)\big).
  \end{equation}
  Let us denote $x := \varphi_R(1)$ and $y := \Er\big(\varphi_R(0)\big)$. Then the
  formula~(\ref{eq:varphiL_varphiR_equation}) can be seen as an equation on words of the form
  $xq=qy$.  Thus by item (ii) of Proposition~\ref{p:properties}, there exist $e,f\in\{0,1\}^*$ and
  $i\in\N$ such that
  \begin{equation}\label{eq:varphiL_varphiR_equation_solution}
    x = ef, \quad y = fe, \quad q = (ef)^ie.
  \end{equation}
  At first, let us inspect the case where $i=0$. Since by Lemma~\ref{lem:q_is_antipalindrome} the
  word $q=e$ is an antipalindrome, we can write
  \[
    \varphi_R(1) = ef \quad\text{and}\quad \varphi_R(0) = \Er(y) = \Er(fe) = e\Er(f),
  \]
  and, therefore, $\varphi$ is conjugated to a morphism in class $\Aj$.

  Now, let $i\geq1$. Since $q=(ef)^ie$ is an antipalindrome, we have that $e$, $f$ are
  antipalindromes, too. This implies that $\varphi_R(1)=ef=\Er(fe)=\varphi_R(0)$, i.e., the fixed
  point is periodic. This is a contradiction, and, therefore, the case $i\geq 1$ will not occur.
\end{proof}

As an application of Theorem~\ref{thm:mainUnifrom} we will show an example of a uniform morphism in
class $\Ec$ which does not generate a palindromic fixed point.

\begin{ex}\label{ex:EP}
  Consider the binary morphism $\varphi$ defined by $\varphi(0)=0101$ and $\varphi(1)=1100$. Since
  both $\varphi(0)$ and $\varphi(1)$ are antipalindromes, $\varphi$ belongs to class $\Ec$.  At the
  same time, one can check that neither $\varphi$ nor $\varphi^2$ belongs to $\Aj$. Therefore by
  Theorem~\ref{thm:mainUnifrom}, the fixed point of $\varphi$ contains only finitely many
  antipalindromes.
\end{ex}


In the second main theorem, we show that only morphisms of class $\Ad$ have aperiodic fixed points
containing infinitely many of both antipalindromes and palindromes.  Even with the additional
requirement of palindromicity of $\bu$, the proof of Theorem~\ref{thm:mainNonuniform} is
complicated. It is likely that the demonstration of the fact that any non-uniform morphism
generating aperiodic infinite antipalindromic words is in class $\Ad$ would require a novel approach
using new techniques.

\begin{thm}\label{thm:mainNonuniform}
  Let $\bu$ be an aperiodic fixed point of a primitive binary non-uniform morphism $\varphi$ such
  that $\Lc(\bu)$ contains an infinite number of palindromes as well as antipalindromes. Then either
  $\varphi$ or $\varphi^2$ is a morphism in class $\Ad$ with $\fa$ being an antipalindrome.
\end{thm}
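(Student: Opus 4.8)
The plan is to combine the structural results already proven. Since $\varphi$ is primitive with an aperiodic fixed point, it is acyclic, hence marked, and the leftmost and rightmost conjugates $\varphi_L$, $\varphi_R$ exist. As in the proof of Theorem~\ref{thm:mainUnifrom}, after possibly replacing $\varphi$ by $\varphi^2$ we may normalize so that $\mathrm{fst}(\varphi_L(a))=a$ and $\mathrm{lst}(\varphi_R(a))=a$ for $a=0,1$. Because $\bu$ contains infinitely many palindromes, Proposition~\ref{p:inverse_morphism} gives $\Rr(\varphi_R(a))=\varphi_L(a)$; because $\bu$ also contains infinitely many antipalindromes, Lemma~\ref{lem:q_is_antipalindrome} forces the conjugacy word $q$ to be empty, so in fact $\varphi_L=\varphi_R=\varphi$ and $\Rr(\varphi(a))=\varphi(a)$ for $a=0,1$ — that is, $\varphi$ itself lies in class $\Pc$: $\varphi(0)$ and $\varphi(1)$ are palindromes. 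Moreover, by the second part of Lemma~\ref{lem:q_is_antipalindrome}, $\Lc(\bu)$ contains arbitrarily long antipalindromes of the form $\varphi(w)$ with $w\in\Lc(\bu)$.

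Next I would use antipalindromicity to pin down the relation between $\varphi(0)$ and $\varphi(1)$. Take $w=w_1\cdots w_n\in\Lc(\bu)$ with $\varphi(w)$ an antipalindrome; then $\varphi(w_1)\cdots\varphi(w_n)=\Er(\varphi(w_n))\cdots\Er(\varphi(w_1))$, and since all blocks $\varphi(w_i)$ are palindromes of fixed lengths $|\varphi(0)|$, $|\varphi(1)|$ (only two possible lengths, these being distinct by non-uniformity), a length-bookkeeping argument — reading the equality block by block from the left, using that a palindrome equals $\Er$ of a palindrome iff the two are related by $\Er$ — shows $w_i=\Er(w_{n+1-i})$ and $\Er(\varphi(a))=\varphi(\Er(a))$, i.e. $\varphi(1)=\Er(\varphi(0))$. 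So $\varphi(0)$ is a palindrome whose $\Er$-image is also a palindrome. By Observation~\ref{obs}, a palindrome $p$ with $\Er(p)$ a palindrome is exactly $p=\Theta(v)$ for some palindrome $v$ (apply $\Theta^{-1}$, which makes sense because such a $p$ lies in the image of $\Theta$ — here I would invoke the standard fact, or prove it via item (iii) of Proposition~\ref{p:properties}, that a word all of whose length-2 blocks at even positions are $01$ or $10$ is a $\Theta$-image). Writing $\varphi(0)=\Theta(v)$ with $v$ a palindrome and $\varphi(1)=\Er\Theta(v)=\Theta\Rr(v)=\Theta(v)$ would be too strong, so in fact $\varphi(1)=\Theta(\Rr(\fa))$ for an appropriate factorization — this is where the fine structure of $v$ enters.

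The heart of the argument is then to show $v$ has the special shape dictated by Definition~\ref{df:type2}: writing $\varphi(0)=\Theta(\fa(\Rr(\fa)\fa)^k)$ and $\varphi(1)=\Theta((\Rr(\fa)\fa)^h\Rr(\fa))$. For this I would pass through class $\Ec$: since $\varphi$ is in $\Pc$ with $\varphi(1)=\Er(\varphi(0))$, the morphism $\varphi\Theta$ sends $0\mapsto\varphi(01)=\varphi(0)\Er(\varphi(0))$ and $1\mapsto\varphi(1)\varphi(0)^{?}$ — more carefully, $\varphi\Theta(0)=\varphi(0)\varphi(1)$ and $\varphi\Theta(1)=\varphi(1)\varphi(0)$, and since $\varphi(1)=\Er(\varphi(0))$ both images are antipalindromes, so $\varphi\Theta\in\Ec$. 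Applying $\Theta^{-1}$ to $\varphi(0)=\Theta(v)$ and $\varphi(1)=\Theta(\Er(v)')$ with the palindromic structure, together with the constraint that $\varphi$ is a genuine morphism whose fixed point is $\bu=\Theta(\boldsymbol{v})$ for some $\boldsymbol{v}$, I would show the $\Theta$-preimages $v_0:=\Theta^{-1}(\varphi(0))$, $v_1:=\Theta^{-1}(\varphi(1))$ satisfy: $v_0$ is a palindrome, $v_1$ is a palindrome, and (from $\varphi(1)=\Er(\varphi(0))$) $v_1=\Rr(v_0)$ — wait, since $\Er\Theta=\Theta\Rr$ this gives $v_1=\Rr(v_0)$, so $v_0=v_1=\Rr(v_0)$ only if... no: $v_0$ palindrome means $\Rr(v_0)=v_0$, forcing $v_1=v_0$ and $\varphi(0)=\varphi(1)$, contradiction. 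The resolution is that $v_0$ need \emph{not} be a palindrome even though $\varphi(0)$ is: $\Theta^{-1}(\text{palindrome})$ is an \emph{antipalindrome} by Observation~\ref{obs}(v)! So $v_0$ is an antipalindrome, i.e. $v_0=\Rr(v_0)$ is false and instead $\Er(v_0)=v_0$... no, antipalindrome means $\Er(v_0)=v_0$. Let me restate: set $v_0=\Theta^{-1}\varphi(0)$; since $\varphi(0)$ is a palindrome, $v_0$ is an antipalindrome by Observation~\ref{obs}. Write $v_0=\Er(t)t$; then $v_1=\Theta^{-1}\varphi(1)=\Theta^{-1}\Er\varphi(0)=\Rr v_0=\Rr(t)\,\Rr\Er(t)$. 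Now with $\fa:=$ a suitable rotation of $t$, the conditions that $v_0$ is an antipalindrome and that the morphism admits the fixed point $\Theta(\boldsymbol v)$ should force $v_0=\Theta(\fa(\Rr\fa\fa)^k)$-type structure after one more $\Theta^{-1}$... Actually $\varphi(0)=\Theta(v_0)$ and I want $\varphi(0)=\Theta(\fa(\Rr(\fa)\fa)^k)$, so I want $v_0=\fa(\Rr(\fa)\fa)^k$. That $v_0$ is an antipalindrome of this precise form is the genuinely hard step; I expect it to require Proposition~\ref{p:properties} item (iii) applied to $v_0$ viewed as a concatenation related to $v_1$, plus the Fine–Wilf theorem (Theorem~\ref{thm:fine-wilf}) to rule out competing periods when comparing $\varphi(0)$ and $\varphi(1)$ as prefixes/suffixes of the common fixed point. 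Finally, once $\varphi\in\Ad$ is established, Proposition~\ref{thm:form_of_type2_with_palindromic_fixedp} — whose hypothesis ($\bu$ palindromic) we are given — yields that $\fa$ is an antipalindrome, completing the proof.

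The main obstacle, as indicated, is the last paragraph: extracting the exact $\fa(\Rr(\fa)\fa)^k$ / $(\Rr(\fa)\fa)^h\Rr(\fa)$ normal form from the bare data "$\varphi(0),\varphi(1)$ palindromes, $\varphi(1)=\Er(\varphi(0))$, aperiodic fixed point." I expect this to hinge on analyzing how $\varphi(0)$ and $\varphi(1)$ overlap inside the fixed point $\bu$ — which letter each begins and ends with, and the combinatorics of the word-equation relating a fixed point's prefix to $\varphi$ of that prefix — and on a careful induction reminiscent of the proofs of Lemmas~\ref{lem:type2_lem1}--\ref{lem:type2_lem2} run in reverse. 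All earlier machinery (Propositions~\ref{p:properties}, \ref{p:inverse_morphism}, Theorems~\ref{thm:fine-wilf}, \ref{thm:klouda}, and Lemma~\ref{lem:q_is_antipalindrome}) is used; the non-uniformity hypothesis is what guarantees $|\varphi(0)|\ne|\varphi(1)|$, which both prevents collapse to class $\Aj$ and is needed for the block-length bookkeeping in the second step.
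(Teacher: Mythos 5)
Your opening reduction is correct and matches the paper's Lemma~\ref{lem:assumption}: palindromicity plus Tan's theorem puts $\varphi$ or $\varphi^2$ in class $\Pc$, and Lemma~\ref{lem:q_is_antipalindrome} kills the conjugacy word, so $\varphi(0)$ and $\varphi(1)$ are palindromes and $\Lc(\bu)$ contains arbitrarily long antipalindromes of the form $\varphi(w)$. After that the argument breaks.

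The second step is not just incomplete but wrong. You claim a block-by-block reading of $\varphi(w_1)\cdots\varphi(w_n)=\Er(\varphi(w_n))\cdots\Er(\varphi(w_1))$ yields $\varphi(1)=\Er(\varphi(0))$. That identity forces $|\varphi(0)|=|\varphi(1)|$, contradicting the non-uniformity hypothesis; equivalently, for a morphism in class $\Ad$ one has $\Er(\psi(0))=\Theta\Rr\bigl(\fa(\Rr(\fa)\fa)^k\bigr)=\Theta\bigl((\Rr(\fa)\fa)^k\Rr(\fa)\bigr)$, which equals $\psi(1)$ only when $k=h$, i.e.\ only in the uniform case (Remark~\ref{rem:1}). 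The block bookkeeping that works in Theorem~\ref{thm:mainUnifrom} fails here precisely because the blocks have two different lengths and their boundaries do not align under $\Er$. Analyzing that misalignment is the actual content of the proof: one compares a run $p_0^k$ at the start of $\varphi(w)$ with the block $\Er(p_1)$ at the start of $\Er(\varphi(w))$, splits into the cases $|p_0^k|\geq|\Er(p_1)|$ and $|p_0^k|<|\Er(p_1)|$, and solves the resulting word equations via items (ii) and (iii) of Proposition~\ref{p:properties}. This produces the normal form $\varphi(0)=\bigl(\Er(c)c\bigr)^{l_0}\Er(c)$, $\varphi(1)=\bigl(c\Er(c)\bigr)^{l_1}c$ for a palindrome $c$ --- note the two images are built from the same $c$ in alternating fashion but are \emph{not} $\Er$-images of each other. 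Ruling out the degenerate subcases needs two further ingredients you do not supply: a letter-frequency argument (frequencies are $1/2$ by Lemma~\ref{lem:freq_of_letters_in_binary_antipalindromic}, which constrains the counts of $c$ versus $\Er(c)$ in the images, Lemma~\ref{lem:vyluc}) and a primitivity statement for $\bigl(\Er(c)c\bigr)^N\Er(c)$ (Lemma~\ref{lem:primit}, via Fine--Wilf).

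Your third paragraph, which you yourself flag with ``I expect'' and ``should force,'' is where the theorem actually lives, and the route you sketch (taking $\Theta^{-1}$ of $\varphi(0)$ directly) presupposes that $\varphi(0)$ lies in the image of $\Theta$, which is not automatic from $\varphi(0)$ being a palindrome: a palindrome of even length need not be a $\Theta$-image. The paper establishes this separately (Lemmas~\ref{lem:tvar}(ii)--(iii) and~\ref{lem:ttvar}) by first showing every preimage $w$ of a long antipalindrome is itself a $\Theta$-image of a palindrome, deducing that every factor of $\bu$ has an essentially unique decomposition $x\Theta(z)y$ with $x,y\in\{0,1,\varepsilon\}$, and then comparing the decompositions of $\varphi(0)$ and $\varphi(0)\varphi(0)$ to force $x=y=\varepsilon$. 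Only after that does $\Er(c)=\Theta(\fa)$ with $\fa$ an antipalindrome follow. As written, your proposal establishes the easy reduction and then asserts, partly on false grounds, the conclusion of the hard part.
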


For the proof of Theorem~\ref{thm:mainNonuniform} we need, besides the properties of languages of
antipalindromic words demonstrated in Section~\ref{sec:language}, also a number of auxiliary
statements. We collect them together with the proof of Theorem~\ref{thm:mainNonuniform} in the
following section.


\section{Proof of Theorem~\ref{thm:mainNonuniform}}\label{sec:proof}

Throughout this section we will work only with morphisms of a special
form. Lemma~\ref{lem:assumption} then shows that such a restriction is justified by assumptions of
Theorem~\ref{thm:mainNonuniform}.

\smallskip
\noindent\textbf{Condition $(\star)$}: We say that $\varphi$ and $\bu$ satisfy condition $(\star)$
if $\varphi$ is a primitive binary non-uniform morphism such that $\varphi(a)$ is a palindrome with
prefix $a$ for $a=0,1$, and that $\bu$ is its aperiodic antipalindromic fixed point.

\begin{lem}\label{lem:assumption}
  Let $\bu$ be an aperiodic fixed point of a primitive binary non-uniform morphism $\varphi$ such
  that $\Lc(\bu)$ contains an infinite number of palindromes as well as antipalindromes. Then $\bu$
  with either $\varphi$ or $\varphi^2$ satisfy condition $(\star)$.
\end{lem}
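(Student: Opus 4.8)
The plan is to use Tan's characterization of palindromic fixed points together with the structure of class $\Pc$ to put $\varphi$ into the form demanded by condition $(\star)$. First I would observe that since $\bu$ is aperiodic, $\varphi$ is acyclic, hence marked, and its leftmost and rightmost conjugates $\varphi_L$, $\varphi_R$ exist. Because $\Lc(\bu)$ contains infinitely many palindromes and $\bu$ is uniformly recurrent (primitivity), Tan's result says that $\varphi$ or $\varphi^2$ is conjugate to a morphism $\chi$ in class $\Pc$; replacing $\varphi$ by $\varphi^2$ if necessary, assume $\varphi$ itself is conjugate to such a $\chi$. Conjugate morphisms have the same fixed-point language (item (iii) of Proposition~\ref{p:properties}), so $\chi$ also has an aperiodic antipalindromic fixed point, and $\chi$ is again primitive and non-uniform. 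It therefore suffices to show that a morphism in class $\Pc$ with these properties can be taken, after passing to a further conjugate, to have $\varphi(a)$ a palindrome beginning with $a$.

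Next I would analyze the structure of $\chi\in\Pc$: we have $\chi(0)=pq_0$, $\chi(1)=pq_1$ with $p,q_0,q_1$ palindromes. The key point is to locate, among all conjugates of $\chi$, one whose images of the two letters are themselves palindromes. A standard fact (and the engine behind Proposition~\ref{p:inverse_morphism}) is that the ``central'' conjugate obtained by shifting the common prefix $p$ appropriately — concretely the morphism $a\mapsto \tilde p\, q_a\, \tilde p$ for a suitable split $p=\tilde p\,\tilde p'$... — need not directly work, so instead I would invoke Proposition~\ref{p:inverse_morphism}: since $\chi$ is conjugate to a class-$\Pc$ morphism, $\Rr(\chi_R(a))=\chi_L(a)$ for $a\in\{0,1\}$. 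I would then define a conjugate $\varphi$ of $\chi$ sitting ``halfway'' between $\chi_L$ and $\chi_R$: write the conjugacy word as $q$ with $\chi_R(a)q=q\chi_L(a)$, and use $\Rr(\chi_R(a))=\chi_L(a)$ to deduce that $\chi_R(a)q=q\Rr(\chi_R(a))$; an equation-on-words argument (items (i)–(ii) of Proposition~\ref{p:properties}) shows $q$ factors through a palindrome, and splitting $q$ at its center yields a conjugate $\varphi$ with $\varphi(a)=\Rr(\varphi(a))$ for both $a$. Having made $\varphi(a)$ a palindrome, I would finally normalize the first letter: a binary marked morphism whose letter-images are palindromes satisfies $\mathrm{fst}(\varphi(0))\neq\mathrm{fst}(\varphi(1))$, so exactly one of $\varphi,\ \sigma\varphi\sigma$ (with $\sigma$ the letter-exchange relabeling) — or rather, passing to $\varphi^2$ if the first letters are ``wrong'' — gives $\mathrm{fst}(\varphi(a))=a$; since the image is a palindrome, $\mathrm{lst}(\varphi(a))=a$ as well. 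Non-uniformity, primitivity, and aperiodicity of the fixed point are all preserved under conjugation and under squaring, so $(\star)$ holds.

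The main obstacle I anticipate is the middle step: extracting from ``conjugate to a class-$\Pc$ morphism'' an \emph{explicit} conjugate all of whose letter-images are palindromes, and doing so without accidentally forcing uniformity or periodicity. The delicate point is that the split of the conjugacy word $q$ at its center must be compatible with both $\varphi(0)$ and $\varphi(1)$ simultaneously; this is exactly where one needs the Fine–Wilf theorem (Theorem~\ref{thm:fine-wilf}) or the cube-type equation $xq=q\Rr(x)$ to guarantee that the two splittings agree, and where a careful bookkeeping of lengths is required to rule out the degenerate case $\varphi(0)=\varphi(1)$. Everything after that — the first-letter normalization and the verification that $(\star)$'s adjectives survive — is routine.
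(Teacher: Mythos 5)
There is a genuine gap in the middle step, and it is exactly the step you flagged as delicate. You propose to take the class-$\Pc$ morphism $\chi$ supplied by Tan's theorem and, using only $\Rr(\chi_R(a))=\chi_L(a)$, produce a conjugate whose letter images are both palindromes by ``splitting the conjugacy word at its center.'' No such conjugate exists in general: the Fibonacci morphism $0\mapsto 01$, $1\mapsto 0$ is in class $\Pc$ (with $p=0$, $q_0=1$, $q_1=\varepsilon$), yet none of its conjugates sends both letters to palindromes. So the property you are trying to extract is not a consequence of membership in $\Pc$; it must come from the antipalindromicity of $\bu$, which your argument never actually uses beyond noting it is preserved under conjugation. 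This is where the paper goes differently: by Lemma~\ref{lem:q_is_antipalindrome}, the conjugacy word $q$ between $\xi_L$ and $\xi_R$ is an antipalindrome (because $\Lc(\bu)$ contains infinitely many antipalindromic bispecial factors of the form $\xi_R(w)q$) \emph{and} a palindrome (because $\bu$ is palindromic), hence $q=\varepsilon$. That single fact does all the work: $\xi_L=\xi_R=\xi$, so $\xi$ has no nontrivial conjugate, which forces the common palindrome $p$ in $\xi(a)=p_ap$ to be empty (otherwise cycling $p$ would give a different conjugate) and simultaneously forces $\varphi$ (or $\varphi^2$), being conjugate to $\xi$, to \emph{equal} $\xi$.

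This last point exposes a second, related gap: the lemma asserts that $\varphi$ or $\varphi^2$ itself, with the given fixed point $\bu$, satisfies $(\star)$ --- not merely some conjugate of it. Your construction only ever delivers a conjugate, and without the $q=\varepsilon$ collapse you have no way to identify that conjugate with the original morphism or its square. Your concluding normalization of first letters is also more complicated than needed: once the letter images are palindromes and the morphism is marked, the first letters of $\xi(0)$ and $\xi(1)$ differ and coincide with the respective last letters, and the existence of the fixed point $\bu$ already forces $\mathrm{fst}(\xi(a))=a$. In short, replace the center-splitting argument by an appeal to Lemma~\ref{lem:q_is_antipalindrome} and the proof goes through; as written, the key claim is false.
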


\begin{proof}
  Since $\bu$ is palindromic, it follows from a result by Tan~\cite{tan-tcs-389} that $\varphi$ or
  $\varphi^2$ is a conjugate to $\xi$ from class $\Pc$, i.e., $\xi(0)=p_0p$ and $\xi(1)=p_1p$, where
  $p_0,p_1,p$ are palindromes. Since $\xi$ is also a primitive morphism, the language of its fixed
  point coincides with $\Lc(\bu)$. By Lemma~\ref{lem:q_is_antipalindrome}, the conjugacy word $q$
  between morphism $\xi_R,\xi_L$ is empty, and thus we have $\xi=\xi_R=\xi_L$. Therefore there is no
  other morphism conjugated with $\xi$, and thus either $\varphi$ or $\varphi^2$ is equal to $\xi$,
  and, moreover, the palindrome $p$ is empty. This means that either $\xi=\varphi$ or
  $\xi=\varphi^2$ together with $\bu$ satisfy condition $(\star)$.
\end{proof}

The above lemma shows that under the assumptions of Theorem~\ref{thm:mainNonuniform}, the words
$\varphi(0)$, $\varphi(1)$ are palindromes. In order to complete the proof of
Theorem~\ref{thm:mainNonuniform}, we need to show that these palindromes are of a very special form
given by Definition~\ref{df:type2}. The demonstration needs two technical statements, which follow
as Lemmas~\ref{lem:tvar} and~\ref{lem:ttvar}. For the sake of lucidity, we have separated parts of
the proof of Lemma~\ref{lem:tvar} into two auxiliary facts, formulated as Lemmas~\ref{lem:vyluc}
and~\ref{lem:primit}.

\begin{lem}\label{lem:vyluc}
  Let $\varphi$ and $\bu$ satisfy condition $(\star)$.  Suppose that there exists a nonempty word
  $x\in\{0,1\}^*$ such that $x\neq \Er(x)$ and $\varphi(0),\varphi(1)\in\{x,\Er(x)\}^*$. For $a=0,1$
  denote by $k_a$, $h_a$ the number of words $x$, $\Er(x)$, respectively, in $\varphi(a)$. Then
  $k_0+k_1= h_0+h_1$.
\end{lem}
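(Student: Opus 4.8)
The plan is to exploit the fact that $\bu$ is antipalindromic together with the decomposition $\varphi(a)\in\{x,\Er(x)\}^*$ to read off a constraint on the letter frequencies. Since $\varphi$ is primitive, by Lemma~\ref{lem:freq_of_letters_in_binary_antipalindromic} the frequency of each letter in $\bu$ is $1/2$. On the other hand, writing $\rho_0,\rho_1$ for the frequencies of $0,1$ and using that $\bu$ is a fixed point of $\varphi$, the vector $(\rho_0,\rho_1)$ is a left Perron eigenvector of the incidence matrix $M_\varphi$ of $\varphi$; so $(\rho_0,\rho_1)=(1/2,1/2)$ forces $|\varphi(0)|_0+|\varphi(1)|_0=|\varphi(0)|_1+|\varphi(1)|_1$, i.e. the total number of $0$'s equals the total number of $1$'s in $\varphi(0)\varphi(1)$.

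Next I would translate this letter-count identity into the word-count identity $k_0+k_1=h_0+h_1$. Since each block of $\varphi(a)$ is either $x$ or $\Er(x)$, the number of $0$'s contributed is $k_a|x|_0+h_a|x|_1$ and the number of $1$'s is $k_a|x|_1+h_a|x|_0$ (because $|\Er(x)|_0=|x|_1$ and $|\Er(x)|_1=|x|_0$). Summing over $a=0,1$ and equating the two totals gives
\[
(k_0+k_1)|x|_0+(h_0+h_1)|x|_1 = (k_0+k_1)|x|_1+(h_0+h_1)|x|_0,
\]
hence $\bigl((k_0+k_1)-(h_0+h_1)\bigr)\bigl(|x|_0-|x|_1\bigr)=0$. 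So it remains to rule out $|x|_0=|x|_1$.

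The main obstacle is exactly this last point: if $|x|_0=|x|_1$ then $|x|$ is even and the frequency argument gives no information, so one must use the hypothesis $x\neq\Er(x)$ more carefully. Here I would argue that $|x|_0=|x|_1$ with $x\neq\Er(x)$ is impossible under condition $(\star)$, using that $\varphi(0)$ and $\varphi(1)$ are palindromes together with $\varphi(0)\in\{x,\Er(x)\}^*$: a palindrome which is a concatenation of copies of $x$ and $\Er(x)$ is, reading it backwards, a concatenation of copies of $\Rr(x)$ and $\Rr(\Er(x))=\Er(\Rr(x))$; matching these two factorizations of the same word and invoking the equation results from Proposition~\ref{p:properties} (items (i)--(ii)) should pin $x$ down to a power of a single primitive word that is itself forced to satisfy $x=\Er(x)$ or to make $\bu$ periodic, in either case a contradiction. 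Once $|x|_0\neq|x|_1$ is established, the displayed identity immediately yields $k_0+k_1=h_0+h_1$, completing the proof. I expect the matching-of-factorizations step, rather than the frequency computation, to carry the real weight.
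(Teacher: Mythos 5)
Your first two steps are sound: the frequency argument does give $|\varphi(0)\varphi(1)|_0=|\varphi(0)\varphi(1)|_1$, and the translation into block counts correctly yields $\bigl((k_0+k_1)-(h_0+h_1)\bigr)\bigl(|x|_0-|x|_1\bigr)=0$. The fatal problem is the step you yourself identify as carrying the real weight: $|x|_0=|x|_1$ with $x\neq\Er(x)$ is \emph{not} impossible under condition $(\star)$ --- in fact it is the essential case. For any morphism $\psi$ in class $\Ad$ with $\fa$ a nonempty antipalindrome (which is exactly what Theorem~\ref{thm:mainNonuniform} is driving at), put $x=\Theta(\fa)$; then $\Er(x)=\Theta(\Rr(\fa))$, so $\psi(0)=x(\Er(x)x)^k$ and $\psi(1)=(\Er(x)x)^h\Er(x)$ lie in $\{x,\Er(x)\}^*$, one has $x\neq\Er(x)$ because $\fa$ is not a palindrome, and yet $|x|_0=|x|_1=|\fa|$ since $\Theta$ sends each letter to one $0$ and one $1$. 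Concretely, $\varphi(0)=0110$, $\varphi(1)=100101101001$ with $x=0110$, $\Er(x)=1001$ satisfies $(\star)$. So no amount of matching factorizations will pin $x$ down to $x=\Er(x)$ or to periodicity, and your identity degenerates to $0=0$ precisely where the lemma has content. The underlying reason is that letter frequencies alone cannot separate the $k$'s from the $h$'s when $x$ and $\Er(x)$ have the same letter content; you never use the existence of antipalindromic factors of the form $\varphi(w)$, which is the extra input required.

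The paper's proof counts blocks instead of letters. By Lemma~\ref{lem:q_is_antipalindrome} (using palindromicity from $(\star)$ to get $q=\varepsilon$) there are arbitrarily long $w\in\Lc(\bu)$ with $\varphi(w)$ an antipalindrome. Such a $\varphi(w)$ decomposes into consecutive length-$|x|$ blocks, each unambiguously equal to $x$ or to $\Er(x)$; applying $\Er$ reverses the block order and swaps the two types, so $\Er$-invariance forces equally many blocks of each type, i.e.\ $k_0|w|_0+k_1|w|_1=h_0|w|_0+h_1|w|_1$. If $k_0+k_1\neq h_0+h_1$ this pins $|w|_0/|w|_1$ to a ratio bounded away from $1$, contradicting (via Lemma~\ref{lem:freq_of_letters_in_binary_antipalindromic}) that $|w|_0/|w|_1\to 1$. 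You would need to replace your third step by an argument of this kind; as written, the proposal cannot be repaired along the route you sketch.
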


\begin{proof}
  We shall prove the statement by contradiction. Suppose that $\Lc(\bu)$ contains infinitely many
  antipalindromes and $k_0+k_1\neq h_0+h_1$. Then at least one of $k_0-h_0$, $k_1-h_1$ is
  non-zero. Assume without loss of generality that $k_0\neq h_0$. Then $\frac{h_1-k_1}{k_0-h_0}\neq
  1$ and there exists $\delta>0$ such that $\frac{h_1-k_1}{k_0-h_0}\notin (1-\delta,1+\delta)$. As
  $\bu$ is a fixed point of a primitive morphism, the frequencies of letters
  exist~\cite{queffelec-lncs-1294}. By Lemma~\ref{lem:freq_of_letters_in_binary_antipalindromic} the
  frequencies of both letters in $\bu$ are equal to $\tfrac{1}{2}$, thus there is $N\in\N$ such that
  for every factor $w$ of $\bu$ of the length $|w|>N$ we have $\frac{|w|_0}{|w|_1}\in
  (1-\delta,1+\delta)$. Choose a factor $w$ such that $\varphi(w)$ is an antipalindrome in $\bu$ of
  length greater than $N(|\varphi(0)|+|\varphi(1)|)$. Such $w\in\Lc(\bu)$ exists by
  Lemma~\ref{lem:q_is_antipalindrome}. Evidently, $|w|>N$.  By assumption, $\varphi(w)$ is a
  concatenation of words $x$ and $\Er(x)$, where $|x|=|\Er(x)|$ and $x\neq\Er(x)$. Therefore
  $\varphi(x)$ has to consist of the same number of factors $x$ and $\Er(x)$, that is, $k_0|w|_0 +
  k_1|w|_1 = h_0|w|_0 + h_1|w|_1$. Consequently,
  \[
  \frac{|w|_0}{|w|_1} = \frac{h_1-k_1}{k_0-h_0} \notin (1-\delta,1+\delta).\qedhere
  \]
\end{proof}

Note that in the proof of Lemma~\ref{lem:vyluc} we have not used that $\varphi(0),\varphi(1)$ are
palindromes.

\begin{lem}\label{lem:primit}
  Let $c$ be a nonempty palindrome, and let $N\geq 1$. Then $\big(\Er(c)c\big)^N\Er(c)$ is a
  primitive word.
\end{lem}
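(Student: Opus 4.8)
The plan is to prove that $w := \big(\Er(c)c\big)^N\Er(c)$ is primitive by contradiction. Suppose $w = z^r$ for some word $z$ and integer $r \geq 2$. First I would observe that $|w| = (2N+1)|c|$, so $|z|$ divides $(2N+1)|c|$, and since $z$ is a prefix of $w$, which begins with $\Er(c)$, the word $z$ shares structure with $\Er(c)c\Er(c)\cdots$. The key is to exploit that $w$ has two kinds of "block" ($\Er(c)$ at odd positions, $c$ at even positions when we chunk into pieces of length $|c|$), and that $c \neq \Er(c)$ (a nonempty palindrome $c$ cannot equal $\Er(c)$, since the only word that is both a palindrome and an antipalindrome is $\varepsilon$, and $\Er(c) = c$ would force $c$ to be an antipalindrome — here I use the remark after Observation~\ref{obs} that $\varepsilon$ is the only word that is both).

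Next I would split into two cases according to whether $|c|$ divides $|z|$ or not. If $|c|$ divides $|z|$, write $|z| = m|c|$; then $z$ is a concatenation of $m$ blocks each of length $|c|$, and these blocks, read off from the prefix of $w$, are $\Er(c), c, \Er(c), c, \dots$ Since $w = z^r$ and $w$ consists of $2N+1$ blocks alternating $\Er(c), c, \dots, \Er(c)$ starting and ending with $\Er(c)$, periodicity with period $m$ (in block units) of a sequence that begins $\Er(c), c$ and whose pattern is the strictly alternating word of odd length $2N+1$ forces a contradiction: an alternating binary-type pattern of odd length over two distinct symbols $\Er(c) \neq c$ is a primitive word over the two-letter alphabet $\{\Er(c), c\}$, so it cannot be a proper power, hence $m = 2N+1$ and $r = 1$.

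The remaining case is when $|c| \nmid |z|$. Here I would use the Fine–Wilf theorem (Theorem~\ref{thm:fine-wilf}): the word $w$ has period $|z|$ (being a power of $z$) and I want to show it also has a "short" period related to $|c|$ to derive a contradiction. More carefully, $w = z^r$ with $r \geq 2$ gives $|w| \geq 2|z|$, and $w$ is also "almost periodic" with period $2|c|$ in the sense that $\Er(c)c$ repeats $N$ times followed by an extra $\Er(c)$; so any sufficiently long factor sees period $2|c|$. Combining the period $|z|$ with a period coming from the $2|c|$-structure via Fine–Wilf, I would conclude $w$ has period $\gcd(|z|, 2|c|)$ on a long enough prefix, which since $\gcd$ divides $2|c|$ but the structure forces it to divide $|c|$ as well, collapses to a period dividing $|c|$; but then $\Er(c)$ would itself be periodic with a period strictly smaller and compatible with $c$, eventually forcing $c = \Er(c)$ via item (i) of Proposition~\ref{p:properties}, contradicting $c \neq \Er(c)$.

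The main obstacle I expect is case $|c| \nmid |z|$: pinning down exactly which periods one gets and verifying the length hypothesis $|w| \geq |z| + 2|c| - \gcd(|z|, 2|c|)$ needed to apply Fine–Wilf, especially when $N = 1$ where $w = \Er(c)c\Er(c)$ is short (length $3|c|$) and there is little room. In that small case I would likely argue directly: if $\Er(c)c\Er(c) = z^r$ then $|z| \in \{|c|, 3|c|\}$ forces $|z| = 3|c|$ (the case $|z| = |c|$ giving $\Er(c) = c = \Er(c)$, impossible), so $r = 1$; and any $|z|$ not dividing $|c|$ with $r \geq 2$ would need $|z| \leq \tfrac{3}{2}|c|$, i.e. $|c| < |z| \le \tfrac32|c|$, and then a short overlap argument using that a palindrome and its $\Er$-image agreeing on an overlap forces them to be equal finishes it. Handling the general $N$ uniformly will require choosing the comparison period carefully so that the Fine–Wilf length bound holds for all $N \geq 1$.
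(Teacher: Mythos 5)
Your overall strategy is sound and lands in the same family as the paper's proof --- both argue by contradiction using the two periods $|z|$ and $2|c|$ of $w=\bigl(\Er(c)c\bigr)^N\Er(c)$, invoke Fine--Wilf, and close with the fact that only $\varepsilon$ is simultaneously a palindrome and an antipalindrome --- but the execution differs. The paper takes the exponent $n$ maximal (so $z$ is primitive), exploits that $w$ and hence $z$ are palindromes to kill the case $|z|\mid 2|c|$ via $\Er(c)c=c\Er(c)$, and isolates the one configuration where the Fine--Wilf length bound is not comfortably satisfied, namely $n=2$, $N=1$, which it settles by writing $c=ef$ and comparing the two halves of $zz=\Er(c)c\Er(c)$. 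You instead split on whether $|c|$ divides $|z|$, handling the divisible case by a clean block-alphabet primitivity argument for $(xy)^Nx$ with $x=\Er(c)\neq c=y$ (which needs no palindromicity of $z$ at all), and the non-divisible case by Fine--Wilf. The obstacle you flag does resolve: one needs $(r-1)|z|\ge 2|c|-\gcd(|z|,2|c|)$, which holds for all $r\ge 3$ and for $r=2$ with $N\ge 2$; in the tightest case $r=2$, $N=1$, $|z|=\tfrac32|c|$ one gets $\gcd\bigl(\tfrac32|c|,2|c|\bigr)=\tfrac12|c|$ and the bound holds with equality, so Fine--Wilf still applies. Moreover the resulting common root $g$ satisfies $|g|\mid\gcd(|z|,2|c|)$, which divides $\gcd\bigl(2|c|,(2N+1)|c|\bigr)=|c|$, whence $\Er(c)c=g^{2|c|/|g|}$ forces $\Er(c)=c$ and $c=\varepsilon$ --- so your ``collapse to a period dividing $|c|$'' is justified, and in fact this closing step works uniformly (your first case could be absorbed into it). The only slip is in your ad hoc sketch for $N=1$, where the claim $|c|<|z|$ is unwarranted ($r\ge 4$ gives $|z|<|c|$), but that subcase is already covered by the Fine--Wilf route, so nothing is lost. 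Net comparison: the paper's use of the palindromicity of $z$ buys a shorter divisibility case at the price of a separate hand computation for $n=2$, $N=1$; your version needs a slightly more careful length check but avoids that special case and uses less of the palindromic structure.
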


\begin{proof}
  We shall prove by contradiction. Let $n\in\N$, $n\geq 2$ be the largest integer such that
  $z^n=\big(\Er(c)c\big)^N\Er(c)$ for some $z\in\{0,1\}^*$, and let $w:=z^n$. As $\Er(c)$ is a
  palindrome, both $z$ and $w$ are palindromes too. Obviously, the word $w$ has periods
  $|\Er(c)c|=2|c|$ and $|z|$.
  \begin{enumerate}
  \item
    At first, assume that $n+N\geq 4$. Thus either $N\geq 2$, or $N=1$ and $n\geq 3$.  By simple
    inspection, we derive from $|w|=(2N+1)|c|=n|z|$ that $|w|\geq 2|c|+|z|$. By
    Theorem~\ref{thm:fine-wilf}, $w$ has also the period $\ell=\gcd(|z|,2|c|)$. If $\ell<|z|$ then
    $|z|=\ell j$ for some $j\in\N$, $j\geq 2$ and therefore $z=y^j$ for some $y\in\{0,1\}^*$, a
    contradiction with the maximality of $n$. This implies $\gcd(|z|,2|c|)=|z|$, thus the period
    $|z|$ of $w$ divides $2|c|$, and there exists $s\in\N$ such that $z^s=\Er(c)c$. As $z$ is a
    palindrome it follows that $z^s=\Er(c)c$ is a palindrome, too, that is,
    \begin{equation}\label{eq:Ecc_pal}
      \Er(c)c = \Rr\big(\Er(c)c\big) = c\Er(c),
    \end{equation}
    where we used the fact that $c$ is a palindrome. By~(\ref{eq:Ecc_pal}) we have $c=\Er(c)$. The
    only word which is a palindrome and an antipalindrome at the same time is $c=\varepsilon$, a
    contradiction.
  \item
    Now, assume that $n+N=3$, i.e., $n=2$, $N=1$, and
    \begin{equation}\label{eq:zz}
      zz = \Er(c)c\Er(c).
    \end{equation}
    This means that the length of $c$ is even and we can write $c=ef$, where $|e|=|f|=\tfrac{1}{2}|c|$.
    Substituting $c=ef$ in~(\ref{eq:zz}) we have
    \[
    zz = \Er(ef)ef\Er(ef) = \Er(f)\Er(e)ef\Er(f)\Er(e),
    \]
    and since the first and the second half of the word have to be equal we obtain $\Er(f)=f$,
    $\Er(e)=\Er(f)$ and $e=\Er(e)$. Therefore $z=fff$, a contradiction with the maximality of $n$.
    \qedhere
  \end{enumerate}
\end{proof}

\begin{lem}\label{lem:tvar}
 Let $\varphi$ and $\bu$ satisfy condition $(\star)$.
 Then
 \begin{enumerate}
 \item
   there exists a palindrome $c\in\{0,1\}^*$ containing $00$ or $11$ such that
   \begin{equation}\label{eq:req_form_p0p1}
     \varphi(0)=\big(\Er(c)c\big)^{l_0}\Er(c)
     \quad\text{and}\quad
     \varphi(1)=\big(c\Er(c)\big)^{l_1}c,
   \end{equation}
   for some $l_0,l_1\in\N$;
 \item
   if $w\in\Lc(\bu)$ is such that $\varphi(w)$ is an antipalindrome, then
   there exists a palindrome $u$ such that $w=\Theta(u)$;
 \item
   every $v\in\Lc(\bu)$ can be expressed in the form $v=x\Theta(z)y$, where $x,y\in\{0,1,\varepsilon\}$,
   $z\in\{0,1\}^*$. Moreover, if either $00$ or $11$ is a factor of $v$ then $x,y,z$ are uniquely
   determined.
 \end{enumerate}
\end{lem}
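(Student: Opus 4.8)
The plan is to prove the three items in the stated order, using the earlier results about antipalindromic bispecial factors and the equations on words.

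For item (i), I would start from the fact that $\varphi(0)$, $\varphi(1)$ are palindromes (condition $(\star)$) and that, by Lemma~\ref{lem:q_is_antipalindrome}, there is a factor $w\in\Lc(\bu)$ with $\varphi(w)$ an antipalindrome. Write $\varphi(w)=\varphi(w_1)\cdots\varphi(w_n)$; since it is an antipalindrome, $\Er$ maps it to itself, and since each $\varphi(w_i)$ is a palindrome, applying $\Er$ and reversing the order of blocks gives a pairing forcing, block by block from the two ends, that consecutive pairs of these palindromic blocks fit together into antipalindromes. Concretely, if $\varphi(a)$ and $\varphi(b)$ are palindromes and $\varphi(a)\varphi(b)$ is an antipalindrome, then by item (iii) of Proposition~\ref{p:properties} there is a palindrome $u$ with $\varphi(a)=(u\Er(u))^iu$ and $\varphi(b)=(\Er(u)u)^j\Er(u)$. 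Running this over the letters $0$ and $1$ (which both must occur in $w$ by primitivity) I would extract a single palindrome $c$ with $\varphi(0)$ and $\varphi(1)$ of the required shapes; the requirement that $c$ contain $00$ or $11$ comes from non-uniformity of $\varphi$ — if $c\in\{0,1\}^*$ contained neither $00$ nor $11$ it would be a power of $01$ or $10$, making $\varphi$ essentially a power of $\Theta$ and in particular uniform, a contradiction. The main technical nuisance here is bookkeeping the indices $i,j$ across the two letters and checking consistency of the single $c$; Lemma~\ref{lem:vyluc} (and Lemma~\ref{lem:primit}, guaranteeing primitivity of the relevant blocks) are exactly what rules out mismatches in the exponents.

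For item (ii), given $w\in\Lc(\bu)$ with $\varphi(w)$ an antipalindrome, I would plug the forms from~\eqref{eq:req_form_p0p1} into $\Er(\varphi(w))=\varphi(w)$. Since $\varphi(0)=\Er(c)c\cdots$ begins with $\Er(c)$ and $\varphi(1)=c\Er(c)\cdots$ begins with $c$, and since $\Er\big(\varphi(a)\big)$ equals $\varphi(b)$ for the "swapped" letter $b$ at the block level, the antipalindrome condition forces $w$ to read, as a concatenation of blocks, like $\Theta$ applied to a palindrome: reading $w=w_1\cdots w_n$, antipalindromicity of $\varphi(w)$ forces $w_{n+1-i}=\Er(w_i)$ at the coarse level after accounting for the block structure, i.e. $w=\Theta(u)$ with $\Er(w)$-type symmetry translating (via Observation~\ref{obs}, item (iv)) into $u$ being a palindrome. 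So I would set $u:=\Theta^{-1}(w)$, which is well defined once I check $w$ lies in the image of $\Theta$ (it does, because $\varphi(0),\varphi(1)$ both have even length and the block-matching forces $w$ to decompose into $01$/$10$ pairs), and then verify $\Rr(u)=u$ using Observation~\ref{obs}.

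For item (iii), I would argue that every factor of $\bu$ is, up to one leading and one trailing letter, in the image of $\Theta$. Since $\bu$ is a fixed point of $\varphi$ and (item (i)) $\varphi(0),\varphi(1)$ both have even length with a rigid block decomposition into copies of $c$ and $\Er(c)$ whose lengths match, one shows that $\bu$ itself is $\Theta$ of some infinite word $\boldsymbol{v}$ — for instance, $\bu=\varphi(\bu)$ and each $\varphi(a)\in\{c,\Er(c)\}^*$, combined with the observation that $\Theta$ divides these blocks appropriately, or more directly: the synchronization properties of the even-length palindromic images give that any long enough factor sits inside a concatenation of $\varphi$-images whose boundaries are at even positions. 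Then any $v\in\Lc(\bu)$, after possibly chopping a single letter off each end, is a factor of $\Theta(\boldsymbol{v})$ aligned to a $\Theta$-block boundary, hence of the form $x\Theta(z)y$ with $x,y\in\{0,1,\varepsilon\}$. For uniqueness when $00$ or $11$ occurs in $v$: the blocks $01$ and $10$ of $\Theta$ never contain $00$ or $11$, so an occurrence of $00$ or $11$ pins down the parity of its position and hence the block alignment, which forces $x$, $z$, $y$ to be unique. The hard part overall is item (i) — extracting the single palindrome $c$ with the precise alternating $\Er(c)c$ structure and the $00$/$11$ condition — since items (ii) and (iii) are then fairly mechanical consequences of the block structure together with the Thue–Morse identities in Observation~\ref{obs}.
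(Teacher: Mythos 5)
The core step of your item (i) does not work as described. You propose to pair adjacent palindromic blocks and apply item (iii) of Proposition~\ref{p:properties} to products $\varphi(a)\varphi(b)$ that ``fit together into antipalindromes''. But the forward decomposition $\varphi(w)=\varphi(w_1)\cdots\varphi(w_n)$ and the decomposition $\Er(\varphi(w))=\Er(\varphi(w_n))\cdots\Er(\varphi(w_1))$ do not align, precisely because $\varphi$ is non-uniform: equality of the two words only tells you that $\varphi(w_1)$ and $\Er(\varphi(w_n))$ agree on a common prefix of the shorter length, and nothing forces any $\varphi(w_i)\varphi(w_{i+1})$ to be an antipalindrome. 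The paper's proof instead analyses the overlap of a run $p_0^k$ (where $w$ starts with $0^k1$) against the single block $\Er(p_1)$; this yields either a degenerate case (killed by Lemma~\ref{lem:vyluc}) or the word equation $p_0^kq=\Rr(q)p_0^k$ with $q$ an antipalindrome, which is solved first by item (ii) of Proposition~\ref{p:properties} and only then by item (iii), applied to $q=BA$ with $A,B$ palindromes --- not to images of letters. Lemma~\ref{lem:primit} is then what forces $k=1$. This overlap/word-equation analysis is the substance of item (i), and it is absent from your sketch (filed under ``bookkeeping'').

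There are two further problems. In item (ii) you justify $w\in\Theta(\{0,1\}^*)$ by saying $\varphi(0),\varphi(1)$ have even length; but $|\varphi(a)|=(2l_a+1)|c|$, the evenness of $|c|$ is only established later (in the proof of Theorem~\ref{thm:mainNonuniform}, after Lemma~\ref{lem:ttvar}), and in any case even block lengths would not imply that $w$ splits into $01/10$ pairs --- the paper shows by another overlap argument that $w$ must begin and end with $01$ (resp.\ $10$) and then inducts on $w=\Theta(0)w'\Theta(0)$. In item (iii) your route via $\bu=\Theta(\boldsymbol{v})$ is circular: that $\varphi(0)$ and $\varphi(1)$ are themselves $\Theta$-images is Lemma~\ref{lem:ttvar}, whose proof uses item (iii) of the present lemma. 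The paper avoids this by using uniform recurrence: every $v\in\Lc(\bu)$ of length $n$ is a factor of some $w$ with $|w|>r(n)$ and $\varphi(w)$ an antipalindrome, and such $w$ is a $\Theta$-image by item (ii). Your uniqueness argument for (iii) (an occurrence of $00$ or $11$ pins down the $\Theta$-alignment) is fine.
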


\begin{proof}
  We start with the proof of the statement (i). Denote $p_0=\varphi(0)$, $p_1=\varphi(1)$, without
  loss of generality we can assume $|p_0|<|p_1|$. Consider a factor $w\in\Lc(\bu)$ such that
  $\varphi(w)$ is an antipalindrome, which exists by Lemma~\ref{lem:q_is_antipalindrome}.

  \smallskip

  \noindent
  At first, let us inspect the case where 0 is a prefix of $\varphi(w)$. As $\varphi(w)$ is an
  antipalindrome, it necessarily has a suffix 1. It follows from the form of $\varphi$ that $w$ has
  a prefix 0 and a suffix 1. Let $k\in\N$ be a positive integer such that $w$ has a prefix $0^k1$.

  \smallskip

  \noindent
  Case 1) $\exists i\in\N$, $1\leq i\leq k$ such that $|p_0^i|\geq|\Er(p_1)|>|p_0^{i-1}|$. Since
  $|p_0|<|p_1|$ we have $i\geq 2$. Recall that $\Er(\varphi(w))=\varphi(w)$, the situation is as
  follows
  \begin{align*}
    \varphi(w) &=
    \overbrace{\rule{0pt}{1.9ex}\boxed{\kern1ex{p_0}\kern1ex}\,\boxed{\kern1ex{p_0}\kern1ex}
      \,\cdots\,\boxed{\kern1ex{p_0}\kern1ex}}^{i\text{-times}}\boxed{\kern1ex{p_0}\kern1ex}\,\cdots\\
    \Er(\varphi(w)) &= \boxed{\kern6ex{\Er(p_1)}\kern6ex}\,\cdots
  \end{align*}
  Thus there exist $p',p''$ such that $p'\neq\varepsilon$, $p_0=p'p''$, and
  $\Er(p_1)=(p'p'')^{i-1}p'$. Since $i\geq 2$ and $\Er(p_1)$ is a palindrome, we have that $p'$ and
  $p''$ are palindromes. It follows that $p_0=p'p''$ is a palindrome too, and therefore $p'$ and
  $p''$ commute. Indeed, $p'p''=p_0=\Rr(p_0)=p''p'$. By item (ii) of Proposition~\ref{p:properties},
  this means that there is a palindrome $x$ such that $p'=x^t$ and $p''=x^s$ for
  some $t,s\in\N$, $t\geq 1$. Consequently, $\varphi$ is of the form $\varphi(0)=x^{t+s}$ and
  $\Er\big(\varphi(1)\big)=x^{(t+s)(i-1)+t}$. Since $\Lc(\bu)$ contains infinitely many palindromes,
  by Lemma~\ref{lem:vyluc}, we have $t+s=(t+s)(i-1)+t$, i.e., $0=(t+s)(i-2)+t\geq t\geq 1$, a
  contradiction. The Case 1) cannot occur.

  \smallskip

  \noindent Case 2) $|p_0^k|<|\Er(p_1)|$. The situation is as follows
  \begin{align*}
    \varphi(w) &=
    \overbrace{\rule{0pt}{1.9ex}\boxed{\kern1ex{p_0}\kern1ex}\,\boxed{\kern1ex{p_0}\kern1ex}
      \,\cdots\,\boxed{\kern1ex{p_0}\kern1ex}}^{k\text{-times}}\boxed{\kern8.5ex{p_1}\kern8.5ex}\,\cdots\\
    \Er(\varphi(w)) &= \boxed{\kern8.5ex{\Er(p_1)}\kern8.5ex}\,\cdots
  \end{align*}
  Thus there exists $q\in\{0,1\}^*$ such that $p_0^kq=\Er(p_1)$, i.e.,
  $p_1=\Er(q)\big(\Er(p_0)\big)^k$. As $q$ is also a prefix of $p_1$, we derive $q=\Er(q)$, i.e.,
  $q$ is an antipalindrome. Since $\Er(p_1)$ is a palindrome, we have $p_0^kq=\Rr(q)p_0^k$.  By item
  (ii) of Proposition~\ref{p:properties}, there exist $ A,B\in\{0,1\}^*$ and $N\in\N$ such that
  \begin{equation}\label{eq:eqsol}
      q=BA,\quad \Rr(q)=AB,\quad p_0^k=(AB)^NA.
  \end{equation}
  First two formulae used together, $AB=\Rr(q)=\Rr(BA)=\Rr(A)\Rr(B)$, imply that $A$ and $B$ are
  palindromes. Since $q=BA$ is an antipalindrome, we can use item (iii) of
  Proposition~\ref{p:properties} to infer that there exists $c\in\{0,1\}^*$, $c$ a palindrome, such
  that $B=c\big(\Er(c)c\big)^s$ and $A=\big(\Er(c)c\big)^t\Er(c)$ for some $s,t\in\N$. Substituting
  these forms back to~(\ref{eq:eqsol}) we obtain
  \begin{equation}\label{eq:p0p1}
    \begin{split}
    p_0^k &= \big(\Er(c)c\big)^{(t+s+1)N+t}\Er(c), \\
    p_1 &= q\big(\Er(p_0)\big)^k = \big(c\Er(c)\big)^{(t+s+1)(N+1)+t}c.
    \end{split}
  \end{equation}
  If $(t+s+1)N+t\geq 1$ then by Lemma~\ref{lem:primit} $p_0^k$ is a primitive word, and, thus $k=1$.
  Consequently, $w$ has a prefix $0^k1=01$ and the forms of $\varphi(0)$ and $\varphi(1)$ given
  by~(\ref{eq:p0p1}) are in agreement with the statement of the lemma. On the other hand, if
  $(t+s+1)N+t=0$ then $N=t=0$. It follows from~(\ref{eq:p0p1}) that $p_0^k=\Er(c)$ and thus $c=d^k$
  for some palindrome $d$, i.e., $p_0=\Er(d)$ and $p_1=\big(d^k\Er(d)^k)^{s+1}d^k$. Using
  Lemma~\ref{lem:vyluc} with $x=d$, $k_0=0$, $h_0=1$, $k_1=k(s+2)$ and $h_1=k(s+1)$, the equality
  $k_0+k_1=h_0+h_1$ implies that $k=1$. Therefore the forms of $\varphi(0)$ and $\varphi(1)$ are in
  the agreement with the statement of the lemma.

  Now consider the case when $1$ is a prefix of $\varphi(w)$. Then $0$ is a suffix of $\varphi(w)$,
  and thus $1$ is a prefix of $w$ and $0$ is a suffix of $w$. The discussion would follow
  analogically to the case where $0$ is a prefix of $\varphi(0)$, with the only exception that we
  would need to show that if $10^k$ is a suffix of $w$ then $k=1$.

  Finally, assume that neither $00$ nor $11$ is a factor of $c$. Then the palindrome $c$, which is a
  prefix of $\varphi(1)$ and starts with the letter $1$, is of the form $c=(10)^m1$ for some
  $m$. Thus $\varphi(0)=(01)^{i}0$ and $\varphi(1)=(10)^{j}1$ for some $i,j\in\N$. In such a case
  the fixed point of $\varphi$ is periodic, which is a contradiction with the assuption $(\star)$.

  \medskip
  
  Let us now prove the statement (ii). Note that in the proof of (i) we have shown that if $0$ is a
  prefix of $\varphi(w)$ then $w$ has a prefix $01$ and a suffix $1$, and $p_0$ and $p_1$ have the
  form~(\ref{eq:req_form_p0p1}).  We will now verify that the penultimate letter $a$ in $w$ is equal
  to $0$, i.e., that $w$ has 01 as a suffix.  The situation is as follows
  \begin{align*}
    \varphi(w) &= \boxed{\kern1ex{p_0}\kern1ex}\,\boxed{\kern9.7ex{p_1}\kern9.7ex}\,\cdots\\
    \Er(\varphi(w)) &= \boxed{\kern8ex{\Er(p_1)}\kern8ex}\,
    \boxed{\rule[-0.58ex]{0pt}{2.31ex}\kern1.43ex{u}\kern1.43ex}\,\cdots
  \end{align*}
  where $u$ is a word such that $p_0p_1=\Er(p_1)u$. Necessarily $|u|=|p_0|$ and it follows
  from~(\ref{eq:req_form_p0p1}) that $u=\big(c\Er(c)\big)^{l_0}c$. Moreover, $u$ is a prefix of
  $E(\varphi(a))$. Since $u$ starts with $c$, it is not a prefix of $\Er(p_1)$. Thus the penultimate
  letter $a$ in $w$ is necessarily $0$. Therefore $w=01w'01=\Theta(0)w'\Theta(0)$. We can easily
  see that $\varphi(w')$ is created from $\varphi(w)$ by removal of a prefix and suffix
  of length $|\varphi(01)|$. Thus $\varphi(w')$ is also an antipalindrome.
  The statement follows by induction on the length of $w$.

  \medskip
  
  Finally, let us prove the statement (iii).  Let $n=|v|$. By Lemma~\ref{lem:q_is_antipalindrome},
  we can find $w$ such that $\varphi(w)$ is an antipalindrome of length
  $|\varphi(w)|>r(n)\big(|\varphi(0)|+|\varphi(1)|\big)$, where $r(n)$ is the recurrence function of
  the uniformly recurrent word $\bu$. Then $|w|>r(n)$ and $v$ is, therefore, a factor of
  $w$. Consequently, $v$ is a factor of $\Theta(u)$ for some $u\in\{0,1\}^*$, since $w=\Theta(u)$ by
  (ii). The uniqueness of the decoposition $v=x\Theta(z)y$ follows from the fact that a preimage
  under the Thue-Morse morphism is unique whenever $00$ or $11$ occur.  The statement (iii) follows.
\end{proof}

\begin{lem}\label{lem:ttvar}
  Let $\varphi$ and $\bu$ satisfy condition $(\star)$.  Then there exist words $u^{(0)}$ and
  $u^{(1)}$ such that $\varphi(0)=\Theta(u^{(0)})$ and $\varphi(1)=\Theta(u^{(1)})$.
\end{lem}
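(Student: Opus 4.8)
The plan is to feed the explicit form of $\varphi(0),\varphi(1)$ supplied by Lemma~\ref{lem:tvar}(i) into the $\Theta$-decomposition of factors supplied by Lemma~\ref{lem:tvar}(iii). First I would record that $\varphi(0),\varphi(1)\in\Lc(\bu)$ (they are images of occurrences of the letters in the fixed point) and that each of them contains $00$ or $11$: by Lemma~\ref{lem:tvar}(i) the word $\Er(c)$ is a suffix of $\varphi(0)$ and $c$ is a suffix of $\varphi(1)$, while the palindrome $c$, hence also $\Er(c)$, contains $00$ or $11$. Since $\bu$ is uniformly recurrent and antipalindromic, $\Lc(\bu)$ is closed under $\Er$ (Lemma~\ref{lem:unifrec_closed_under_E}), so both $00$ and $11$ lie in $\Lc(\bu)$. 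By Lemma~\ref{lem:tvar}(iii) we may then write, uniquely, $\varphi(a)=x_a\Theta(z_a)y_a$ with $x_a,y_a\in\{0,1,\varepsilon\}$ for $a=0,1$; the goal becomes to show $x_a=y_a=\varepsilon$, for then $\varphi(a)=\Theta(z_a)$ and we set $u^{(a)}:=z_a$.

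Next I would establish that $|\varphi(0)|$ and $|\varphi(1)|$ are even. Reversing a word which is a $\Theta$-image yields again a $\Theta$-image of the same length, so applying $\Rr$ to the decomposition of the palindrome $\varphi(a)$ produces another admissible decomposition $\varphi(a)=\Rr(\varphi(a))=y_a\Theta(z_a')x_a$ with $|z_a'|=|z_a|$; the uniqueness in Lemma~\ref{lem:tvar}(iii) then forces $x_a=y_a$, whence $|\varphi(a)|=2|x_a|+2|z_a|$ is even.

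The heart of the argument is a squaring trick. Since $\varphi(0)$ is a palindrome with first (hence also last) letter $0$, the word $\varphi(00)=\varphi(0)\varphi(0)\in\Lc(\bu)$ has an occurrence of $00$ straddling the two copies, i.e.\ starting at position $|\varphi(0)|$, which is even. Decompose $\varphi(0)\varphi(0)=X\Theta(Z)Y$ uniquely (it contains $00$). The elementary point is that in any word $X\Theta(Z)Y$ with $|X|,|Y|\le 1$, an occurrence of $00$ or $11$ can never lie inside a single $\Theta$-block, so it must straddle two consecutive $\Theta$-blocks, or $X$ and $\Theta(Z)$, or $\Theta(Z)$ and $Y$; in each case its starting position is $\equiv |X|\pmod 2$. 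Comparing with the even starting position of our $00$ gives that $|X|$ is even, so $X=\varepsilon$; then $|Y|=2|\varphi(0)|-2|Z|$ is even, so $Y=\varepsilon$. Thus $\varphi(0)\varphi(0)=\Theta(Z)$ with $|Z|=|\varphi(0)|$ even, and therefore $\varphi(0)$, being the length-$|Z|$ prefix of $\Theta(Z)$, equals $\Theta(Z_1\cdots Z_{|Z|/2})$. This exhibits a decomposition of $\varphi(0)$ with empty borders, so by the uniqueness in Lemma~\ref{lem:tvar}(iii) we conclude $x_0=y_0=\varepsilon$, i.e.\ $\varphi(0)=\Theta(u^{(0)})$. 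Running the identical argument with $1$ in place of $0$ (using $11\in\Lc(\bu)$ and the suffix $1$ of $\varphi(1)$) gives $\varphi(1)=\Theta(u^{(1)})$.

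The step I expect to be most delicate is the ``starting position $\equiv |X|\pmod 2$'' claim: one must treat the interior block-straddles together with the two border straddles against $X$ and $Y$ uniformly, and fix an indexing convention so that the parity comparison against $|\varphi(0)|$ is unambiguous. Everything else is a direct application of Lemmas~\ref{lem:tvar} and~\ref{lem:unifrec_closed_under_E}.
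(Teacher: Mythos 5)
Your argument is correct and takes essentially the same route as the paper: both hinge on applying the unique $\Theta$-factorization of Lemma~\ref{lem:tvar}(iii) to $\varphi(0)\varphi(0)\in\Lc(\bu)$ and comparing it with the square of the factorization of $\varphi(0)$. The only difference is in the finishing move — where the paper runs a case analysis on the borders $x,y$ and exhibits two distinct $\Theta$-factorizations of $\varphi(0)$ in each bad case, you first deduce $x_a=y_a$ (hence $|\varphi(a)|$ even) from palindromicity plus uniqueness and then eliminate the borders by the parity of the starting position of the junction occurrence of $00$; both versions are valid.
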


\begin{proof}
  By (i) of Lemma~\ref{lem:tvar}, $\varphi(0)$ contains either $00$ or $11$ as a factor. Thus using
  (iii) of the same lemma, the word $\varphi(0)$ can be uniquely written in the form
  $\varphi(0)=x\Theta(z)y$, where $x,y\in\{0,1,\varepsilon\}$. Since by
  Lemma~\ref{lem:unifrec_closed_under_E} the language $\Lc(\bu)$ is closed under $\Er$, we have
  $00,11\in\Lc(\bu)$, and thus $\varphi(0)\varphi(0)\in\Lc(\bu)$. Again by (iii) of
  Lemma~\ref{lem:tvar}, $\varphi(0)\varphi(0)=x'\Theta(z')y'$, where $x',y'\in\{0,1,\varepsilon\}$
  and the form $x'\Theta(z')y'$ is unique. Together, we have
  \[
  x'\Theta(z')y' = \varphi(0)\varphi(0) = x\Theta(z)yx\Theta(z)y.
  \]
  We split the discussion based on possible values of $x,y$:
  \begin{itemize}
  \item
    If $x,y\neq\varepsilon$, then since $\varphi(0)$ is a palindrome, we have $x=y$, i.e.,
    $x'\Theta(z')y' = x\Theta(z)xx\Theta(z)x$. Since $xx$ cannot be the image under $\Theta$ of a
    letter, the situation is the following one (necessarily $x'=y'=\varepsilon$)
    \vspace{-\baselineskip}
    \[
    \begin{array}{P{3ex}P{3ex}P{3ex}P{3ex}P{3ex}P{3ex}P{3ex}P{3ex}P{3ex}P{3ex}P{3ex}
        P{3ex}P{3ex}P{3ex}}
      ~ & ~ & ~ & ~ & ~ & ~ & ~ & ~ & ~ & ~ & ~ & ~ & ~ & ~ \\
      \hline
      \multicolumn{1}{|c|}{x} & \multicolumn{5}{c|}{\rule[-1.3ex]{0pt}{3.9ex}\Theta(z)} &
      \multicolumn{1}{c|}{x} & \multicolumn{1}{c|}{x} & \multicolumn{5}{c|}{\Theta(z)} &
      \multicolumn{1}{c|}{x} \\
      \hline\hline
      \multicolumn{2}{|c|}{\rule[-1.3ex]{0pt}{3.9ex}\Theta(z'_1)} & \multicolumn{1}{c|}{\cdots} &
      \multicolumn{2}{c|}{\Theta(z'_{k-1})} & \multicolumn{2}{c|}{\Theta(z'_k)} &
      \multicolumn{2}{c|}{\Theta(z'_{k+1})} & \multicolumn{3}{c|}{\cdots} &
      \multicolumn{2}{c|}{\Theta(z'_{2k})} \\
      \hline
    \end{array}
    \]
    and we have two different forms of $\varphi(0)=x\Theta(z)x=\Theta(z'_{k+1}\cdots z'_{2k})$, a
    contradiction.
  \item
    If $x\neq\varepsilon$, $y=\varepsilon$ then 
    $x'\Theta(z')y' = x\Theta(z)x\Theta(z)$, 
    the situation is either
    \vspace{-\baselineskip}
    \[
    \begin{array}{P{3ex}P{3ex}P{3ex}P{3ex}P{3ex}P{3ex}P{3ex}P{3ex}P{3ex}P{3ex}P{3ex}P{3ex}}
      ~ & ~ & ~ & ~ & ~ & ~ & ~ & ~ & ~ & ~ & ~ & ~  \\
      \hline
      \multicolumn{1}{|c|}{x} & \multicolumn{5}{c|}{\rule[-1.3ex]{0pt}{3.9ex}\Theta(z)} &
      \multicolumn{1}{c|}{x} & \multicolumn{5}{c|}{\Theta(z)} \\
      \hline\hline
      \multicolumn{1}{|c|}{x'} & \multicolumn{5}{c|}{\rule[-1.3ex]{0pt}{3.9ex}\Theta(z'_1\cdots z'_k)} &
      \multicolumn{2}{c|}{\Theta(z'_{k+1})} & \multicolumn{1}{c|}{\cdots} &
      \multicolumn{2}{c|}{\Theta(z'_{2k})} & \multicolumn{1}{c|}{y'} \\
      \hline
    \end{array}
    \]
    or
    \vspace{-\baselineskip}
    \[
    \begin{array}{P{3ex}P{3ex}P{3ex}P{3ex}P{3ex}P{3ex}P{3ex}P{3ex}P{3ex}P{3ex}P{3ex}P{3ex}}
      ~ & ~ & ~ & ~ & ~ & ~ & ~ & ~ & ~ & ~ & ~ & ~ \\
      \hline
      \multicolumn{1}{|c|}{x} & \multicolumn{5}{c|}{\rule[-1.3ex]{0pt}{3.9ex}\Theta(z)} &
      \multicolumn{1}{c|}{x} & \multicolumn{5}{c|}{\Theta(z)} \\
      \hline\hline
      \multicolumn{2}{|c|}{\rule[-1.3ex]{0pt}{3.9ex}\Theta(z'_1)} & \multicolumn{1}{c|}{\cdots} &
      \multicolumn{2}{c|}{\Theta(z'_{k})} & \multicolumn{2}{c|}{\Theta(z'_{k+1})} &
      \multicolumn{2}{c|}{\Theta(z'_{k+2})} & \multicolumn{1}{c|}{\cdots} &
      \multicolumn{2}{c|}{\Theta(z'_{2k+1})} \\
      \hline
    \end{array}
    \]
    and again we have a contradiction with the uniqueness of the form of $\varphi(0)$ since
    either $\varphi(0)=x\Theta(z)=\Theta(z'_{k+1}\cdots z'_{2k})y'$ or
    $\varphi(0)=x\Theta(z)=\Theta(z'_{1}\cdots z'_{k})y''$, where $y''$ is the first letter of
    $\Theta(z'_{k+1})$.
  \item
    The case $x=\varepsilon$, $y\neq\varepsilon$ can be excluded analogously to the previous one.
  \end{itemize}
  Therefore $x=y=\varepsilon$ and $\varphi(0)=\Theta(z)$ for some $z\in\{0,1\}^*$. The same property
  of $\varphi(1)$ can be proved analogously.
\end{proof}

Now we are in position to provide the proof of Theorem~\ref{thm:mainNonuniform}.

\begin{proof}[Proof of Theorem~\ref{thm:mainNonuniform}]
  By (i) of Lemma~\ref{lem:tvar} and Lemma~\ref{lem:ttvar} there is a palindrome $c$, words
  $u^{(0)},u^{(1)}$, and $l_0,l_1\in\N$ such that
  \begin{align*}
    \varphi(0) &= \Theta(u^{(0)}) = \big(\Er(c)c\big)^{l_0}\Er(c), \\
    \varphi(1) &= \Theta(u^{(1)}) = \big(c\Er(c)\big)^{l_1}c.
  \end{align*}
  Obviously, $|\varphi(0)|=2|u^{(0)}|=(2l_0+1)|c|$. Thus $|c|$ is even and
  $\Er(c)=\Theta(u_1u_2\cdots u_n)$, where $u_1u_2\cdots u_n$ is a prefix of $u^{(0)}$ such that
  $2n=|\Er(c)|$. Let us denote $\fa:=u_1u_2\cdots u_n$. As $c$ is a palindrome, $\Er(c)$ is also a
  palindrome, and thus $\fa$ is indeed an antipalindrome. The statement of the theorem follows.
\end{proof}

%
%
%
%
%
%
%

\section{Summary and comments}

In this paper we have focused on morphisms with antipalindromic fixed points. Our aim was to define
a suitable class of morphisms and study the analogy of the well known problem of Hof, Knill and
Simon~\cite{hof-knill-simon-cmp-174}, known in the combinatorics on words as the HKS conjecture.  We
defined classes $\Aj$, $\Ad$ and demonstrated two main results for them.  The question was
previously studied by Labb\'e in~\cite{labbe-memoire}. He proves that if a uniform morphism has an
antipalindromic fixed point, then $\varphi$ or $\varphi\Theta$ is conjugated to a morphism in class
$\Ec$ defined by~\eqref{eq:classEP}. He conjectures (Conjecture 5.5 in~\cite{labbe-memoire}) that in
fact, always the latter is true. Taking into account the relation of classes $\Ec$ and $\Aj$ in
Remark~\ref{rem:EPA1}, our Theorem~\ref{thm:mainUnifrom} proves this conjecture.

%

The main question about morphisms with antipalindormic fixed points remains open. We believe the
following to be true.

\begin{conj}
  Let $\varphi$ be a primitive binary morphism with an antipalindromic fixed point $\bu$.
  Then $\varphi$ or $\varphi^2$ is conjugated to a morphism in class $\Aj\cup\Ad$.
\end{conj}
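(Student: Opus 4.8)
The plan is to reduce the conjecture to the single case not yet covered by the paper and then to attack it by extending the machinery of Section~\ref{sec:proof}. The eventually periodic case is covered by Lemma~\ref{lem:evper_antipal_fixed_by_Type1}; the case of uniform $\varphi$ is Theorem~\ref{thm:mainUnifrom}; and the case where $\bu$ is aperiodic and contains infinitely many palindromes is Theorem~\ref{thm:mainNonuniform}. So it remains to treat a primitive binary \emph{non-uniform} morphism $\varphi$ whose fixed point $\bu$ is aperiodic, contains infinitely many antipalindromes, but only finitely many palindromes; for such $\varphi$ the target class is $\Ad$, and by Proposition~\ref{thm:form_of_type2_with_palindromic_fixedp} the parameter $\fa$ will necessarily fail to be an antipalindrome.

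I would start as in the proof of Theorem~\ref{thm:mainUnifrom}. Passing to $\varphi^2$ if necessary, arrange $\mathrm{fst}(\varphi_L(a))=a$ and $\mathrm{lst}(\varphi_R(a))=a$ for $a=0,1$. By Lemma~\ref{lem:q_is_antipalindrome} (itself resting on Theorem~\ref{thm:klouda} and Lemma~\ref{lem:bispecials_in_antipalin_word}) there are arbitrarily long antipalindromes $\varphi_R(w)q$ with $w=w_1\cdots w_n\in\Lc(\bu)$; combining this with~\eqref{eq:dovaleny_morfizmus} and $q=\Er(q)$ yields the block identity
\[
\varphi_L(w_1)\cdots\varphi_L(w_n)=\Er(\varphi_R(w_n))\cdots\Er(\varphi_R(w_1)).
\]
Comparing first letters gives $w_1=\Er(w_n)$, but when $|\varphi(0)|\neq|\varphi(1)|$ one cannot peel the two sides off block by block as in the uniform case, and this is the crux. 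The idea is to choose the $w$'s in a nested family converging to the one-sided infinite word $\boldsymbol{v}$ of Lemma~\ref{lem:inf_word_of_antipals}, and to read the identity on the bi-infinite word $\Er(\boldsymbol{v})\boldsymbol{v}$: it carries two $\varphi_R$-block tilings, the natural one and its $\Er$-image, and since the $\varphi_R$-desubstitution of a long factor of $\bu$ is essentially unique (recognizability — the ingredient already underlying Theorem~\ref{thm:klouda}), these two tilings must agree up to a single global shift whose magnitude is controlled by the conjugacy word $q$. Pinning down that shift is what should force the Thue-Morse structure: a shift by an odd number of letters between two block tilings is exactly what the morphism $\Theta$ produces, and I would aim to conclude that $\varphi_R(0),\varphi_R(1)\in\Theta(\{0,1\}^*)$, i.e. that some conjugate of $\varphi$ factors through $\Theta$ (and likewise that $q\in\Theta(\{0,1\}^*)$).

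With $\varphi_R(a)=\Theta(\pi_a)$ in hand the remainder should be a (still technical) variation on Lemmas~\ref{lem:tvar} and~\ref{lem:ttvar}: the antipalindromicity of the words $\varphi_R(w)q$ translates, via Observation~\ref{obs}(iv)--(v), into palindromicity relations on $\pi_0$, $\pi_1$ (and on the $\Theta$-preimage of $q$, which the same argument should produce); then Proposition~\ref{p:properties}(iii) — the normal form $(u\Er(u))^iu$ for a concatenation of two palindromes that is an antipalindrome — applied at the level of the preimages, as in Lemma~\ref{lem:tvar}(i), should force $\pi_0$ and $\pi_1$ into the shapes $\fa(\Rr(\fa)\fa)^{k}$ and $(\Rr(\fa)\fa)^{h}\Rr(\fa)$ of Definition~\ref{df:type2}. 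A final appeal to Proposition~\ref{p:notrigid} absorbs the remaining bookkeeping about the exponents $k$ and $h$.

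The main obstacle is the second paragraph. In the palindromic case, Tan's theorem supplies the decisive reduction ``$\varphi(a)$ is a palindrome'' for free; here there is no analogue, so the passage from the block identity to a Thue-Morse factorization must be made from scratch, and the interplay of a possibly non-trivial conjugacy word $q$, the unequal block lengths and recognizability is exactly the delicate point where — as the authors themselves anticipate — a genuinely new argument, rather than an adaptation of Section~\ref{sec:proof}, seems to be required.
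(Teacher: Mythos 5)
This statement is the paper's open conjecture: the authors do not prove it, they only establish the special cases you correctly list (eventually periodic words, uniform morphisms, and aperiodic fixed points that are also palindromic). Your first paragraph is a correct reduction to the one remaining case --- non-uniform $\varphi$, aperiodic $\bu$ with infinitely many antipalindromes but only finitely many palindromes --- and it is right that the target there must be $\Ad$ (a non-uniform morphism cannot be conjugate to the uniform morphisms of $\Aj$). But the rest of the proposal is a research plan, not a proof, and you concede as much in your closing paragraph; so there is a genuine gap, and it sits exactly where you locate it.

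Concretely, the decisive step of your second paragraph does not go through as stated. The $\Er$-image of the natural $\varphi_R$-tiling of $\Er(\boldsymbol{v})\boldsymbol{v}$ is a tiling by the blocks $\Er(\varphi_R(0))$ and $\Er(\varphi_R(1))$, which in general are \emph{not} among $\{\varphi_R(0),\varphi_R(1)\}$; recognizability (the uniqueness of desubstitution underlying Theorem~\ref{thm:klouda}) only compares two tilings by the \emph{same} block set, so it cannot be invoked to align the two tilings unless you first prove something like $\Er(\varphi_R(a))=\varphi_R(a)$ --- and Example~\ref{ex:EP} shows that even morphisms with antipalindromic images need not have antipalindromic fixed points, so no such identity is available for free. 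Likewise, the heuristic ``an odd shift between tilings is what $\Theta$ produces'' is not an argument: in the paper, the conclusion $\varphi(a)\in\Theta(\{0,1\}^*)$ (Lemma~\ref{lem:ttvar}) rests entirely on first knowing that $\varphi(0),\varphi(1)$ are palindromes, which Tan's theorem supplies only in the palindromic case and which then feeds Proposition~\ref{p:properties}(iii) in Lemma~\ref{lem:tvar}(i). Without a replacement for that input, the chain Lemma~\ref{lem:tvar} $\to$ Lemma~\ref{lem:ttvar} has no starting point, and the conjecture remains open --- as the authors themselves state.
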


Our Theorems~\ref{thm:mainUnifrom} and~\ref{thm:mainNonuniform} confirm the conjecture provided
$\varphi$ is uniform or $\bu$ is palindromic.

Let us look at the results of our paper yet from a different aspect. Theorem~\ref{thm:mainUnifrom}
can be seen as the analogy of the result of Tan~\cite{tan-tcs-389} for uniform morphisms. We have
shown that fixed points of morphisms in class $\Ad\setminus\Aj$ are not rigid, see
Remarks~\ref{rem:1} and~\ref{rem:2}.  Let us reformulate the statements of
Theorems~\ref{thm:mainUnifrom} and~\ref{thm:mainNonuniform} with this aspect in mind.

\begin{thm}
  Let $\bu$ be a fixed point of a primitive binary morphism which contains infinitely many both
  palindromes and antipalindromes.
  \begin{itemize}
  \item
    If $\bu$ is rigid, then it is a fixed point of a uniform morphism of the form $\varphi(0)=w$,
    $\varphi(1)=E(w)$ for some $w\in\{0,1\}^*$.
  \item
    If $\bu$ is not rigid, then it is a fixed point of a morphism in the form $\psi(0)=\Theta(w)$,
    $\psi(1)=\Theta\big((\Rr(w)w)^l\Rr(w)\big)$ where $w$ is an antipalindrome and $l\in\N$.
  \end{itemize}
\end{thm}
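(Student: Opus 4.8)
The plan is to derive the statement from Theorems~\ref{thm:mainUnifrom} and~\ref{thm:mainNonuniform}, after first normalising the generating morphism, and then to read off the rigidity dichotomy from Remark~\ref{rem:2} and Proposition~\ref{p:notrigid}. Since $\bu$ is a fixed point of a primitive morphism it is uniformly recurrent; the eventually periodic case is handled by Section~\ref{sec:periodic}, so I would assume $\bu$ aperiodic throughout. For the normalisation I would argue as in the first part of the proof of Lemma~\ref{lem:assumption}: because $\bu$ is palindromic, Tan's theorem gives a morphism $\rho\in\Pc$ conjugate to $\varphi$ or to $\varphi^2$; by Lemma~\ref{lem:q_is_antipalindrome} the conjugacy word of $\rho$ is empty, so $\rho$ is simultaneously its leftmost and rightmost conjugate, hence $\varphi$ or $\varphi^2$ literally \emph{equals} $\rho$, and moreover the central palindrome of $\rho$ is forced to be empty (otherwise $\rho(0),\rho(1)$ would be powers of a common word and $\bu$ periodic). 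Replacing $\varphi$ by $\varphi^2$ does not change $\bu$, so from now on I would work with $\rho$, whose images $\rho(0),\rho(1)$ are palindromes beginning with $0$ and $1$; the border conditions used in the proof of Theorem~\ref{thm:mainUnifrom} then already hold for $\rho$, so no further squaring is needed.

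Next I would distinguish two cases. If $\rho$ is uniform, then plugging the empty conjugacy word $q=\varepsilon$ into the argument of Theorem~\ref{thm:mainUnifrom} forces $\rho(1)=\Er(\rho(0))$, so $\bu$ is a fixed point of the uniform morphism $\rho$ of the first shape with $w:=\rho(0)$ (and $w\ne\Er(w)$ by aperiodicity, so $\rho$ is injective on letters). If $\rho$ is non-uniform, then $\rho$ together with $\bu$ satisfies condition $(\star)$, so the machinery of Lemmas~\ref{lem:tvar} and~\ref{lem:ttvar} and the concluding computation in the proof of Theorem~\ref{thm:mainNonuniform} put $\rho$ in class $\Ad$ with parameter $\fa$ an antipalindrome, say $\rho(0)=\Theta\big(\fa(\Rr(\fa)\fa)^k\big)$ and $\rho(1)=\Theta\big((\Rr(\fa)\fa)^h\Rr(\fa)\big)$; Proposition~\ref{p:notrigid} then also exhibits $\bu$ as the fixed point of the morphism $\xi$ given by $\xi(0)=\Theta(\fa)$ and $\xi(1)=\Theta\big((\Rr(\fa)\fa)^{k+h}\Rr(\fa)\big)$, which is precisely the second shape with $w=\fa$ and $l=k+h$. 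Thus every such $\bu$ realises at least one of the two prescribed shapes.

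It remains to align the two cases with rigidity. In the non-uniform branch necessarily $k\ne h$, hence $k+h\ge1$, and Remark~\ref{rem:2} (based on Proposition~\ref{p:notrigid}) shows $\bu$ is not rigid; equivalently, if $\bu$ is rigid then $\rho$ is uniform, which is the first bullet. For the second bullet I would prove the converse: a non-rigid $\bu$ cannot fall into the uniform branch, hence has the second shape. This last step is the only one not already contained in the preceding sections, and I expect it to be the main obstacle. Concretely it amounts to showing that the fixed point of a primitive, aperiodic, uniform morphism $\rho(0)=w$, $\rho(1)=\Er(w)$ (with $\rho$ injective on letters, as above) is rigid, i.e.\ that every morphism in its stabiliser is a power of the primitive root of $\rho$. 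I would isolate this as a separate lemma, deriving it from recognizability of primitive injective uniform substitutions together with the rigidity of the Thue--Morse-type words cited in the introduction.
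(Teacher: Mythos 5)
Your reduction to Theorems~\ref{thm:mainUnifrom} and~\ref{thm:mainNonuniform} is the route the paper intends: this theorem is stated without proof, as a repackaging of those two results together with Proposition~\ref{p:notrigid} and Remark~\ref{rem:2}, and your normalisation (Tan plus Lemma~\ref{lem:q_is_antipalindrome} forcing $q=\varepsilon$, hence literal equality with a $\Pc$-morphism whose images are palindromes), your uniform case, and your first bullet (in the non-uniform branch $k\neq h$, so $k+h\geq 1$ and Remark~\ref{rem:2} excludes rigidity) are all sound. One small slip: the central palindrome $p$ is empty not because $\rho(0),\rho(1)$ would otherwise be powers of a common word, but because $\rho=\rho_L=\rho_R$ admits no nontrivial conjugate, while $a\mapsto pq_a$ is a nontrivial conjugate of $a\mapsto q_ap$ whenever $p\neq\varepsilon$; this is how Lemma~\ref{lem:assumption} argues it.

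The genuine gap is exactly where you place it, and your proposed repair does not close it. For the second bullet you must exclude the possibility that a non-rigid $\bu$ is generated only by uniform morphisms $0\mapsto w$, $1\mapsto\Er(w)$ with $w$ not a $\Theta$-image (a shape-two morphism has $\psi(0)\in\Theta(\{0,1\}^*)$, so such a $\rho$ --- e.g.\ with $|w|$ odd --- cannot itself serve as the witness). Your plan is to prove that every aperiodic fixed point of such a uniform morphism is rigid, ``from recognizability together with the rigidity of the Thue--Morse-type words cited in the introduction.'' But the rigidity results the paper cites concern Sturmian words, Prouhet words and epistandard fixed points, none of which is the class you need, and Remark~\ref{rem:2} explicitly says that already for the uniform subfamily $0\mapsto\Theta(\fa)$, $1\mapsto\Theta(\Rr(\fa))$ with $\fa\neq 01$ ``the situation is not clear.'' So the lemma your argument hinges on is a question the authors themselves leave open, not something recoverable from recognizability plus the cited literature; as written, the second bullet remains unproved (in the paper as well as in your proposal). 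A further warning sign that an aperiodicity hypothesis is needed: $(01)^\infty$ is a non-rigid fixed point of the primitive morphism $0\mapsto 01$, $1\mapsto 01$, contains infinitely many palindromes and antipalindromes, yet is fixed by no morphism of the second shape, since $\psi(0)=\Theta(v)$ being a prefix of $(01)^\infty$ forces $v\in 0^*$, which is never an antipalindrome; your deferral of the periodic case to Section~\ref{sec:periodic} does not dispose of this.
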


Our study motivates a number of questions for further research.

\begin{enumerate}
\item
  Infinite words containing arbitrarily long palindromes and antipalindromes can be constructed
  using the so-called palindromic and pseudopalindromic closure, introduced by de Luca and De
  Luca~\cite{deluca-deluca-tcs-362}.  They showed the construction for the Thue-Morse word. The
  details of the construction for complementary symmetric Rote words have been described
  in~\cite{blondin-masse-paquin-tremblay-vuillon-jintsequences-16}.
  In~\cite{velka-dvorakova-integers-2018}, the authors study which of words generated by
  pseudopalindromic closure besides the Thue-Morse word $\boldsymbol{t}$ are fixed points of
  morphisms.  They conjecture that only morphisms $\varphi:\{0,1\}^*\to\{0,1\}^*$ of the form
  $$
  \varphi(0) = 0(110)^k,\qquad \varphi(1)=1(001)^k,\qquad k\in\N,k\geq 1,
  $$ generate such fixed points. Let us mention that the above morphisms belong to both class $\Pc$
  and $\Aj$. It would be interesting to clarify whether other morphisms in class $\Aj$ or $\Ad$ have
  fixed points arising by pseudopalindromic closure.


\item
  Another way of constructing infinite words is given by the mapping $S:\{0,1\}^{\N}\to
  \{0,1\}^{\N}$ defined by $S(u_0u_1u_2\cdots)=v_0v_1v_2\cdots$ where $v_i=u_i+u_{i+1} \pmod 2$ for
  $i\in\N$. If $S(\bu)$ contains infinitely many palindromes of odd length with central letter 1,
  then the word $\bu$ contains infinitely many antipalindromes.  Mapping $S$ defines the relation
  between Sturmian words and complementary symmetric Rote words. In fact, $\bu$ is a complementary
  symmetric Rote word if and only if $S(\bu)$ is a Sturmian word~\cite{rote-JNT-46}.  It would be
  interesting to determine for which $\bu$ fixed point of a primitive morphism, the image $S(\bu)$
  is also fixed by a primitive morphism and contains infinitely many palindromes with central letter
  $1$. An example of such a pair $\bu$ and $S(\bu)$ is the Thue-Morse word
  $\boldsymbol{t}=01101001\cdots$ and the fixed point $\boldsymbol{d}=1011101010111010\cdots$ of the
  period-doubling morphism $D(0)=11$, $D(1)=10$. One can easily verify that
  $S(\boldsymbol{t})=\boldsymbol{d}$.

\item
  Not all palindromic infinite words are rich in palindromes, in the sense
  of~\cite{droubay-justin-pirillo-tcs-255}. An example of an infinite word which is not rich is the
  Thue-Morse word $\boldsymbol{t}$. Nevertheless, $\boldsymbol{t}$ is generated by a morphism in
  class $\Pc$, namely $\Theta^2$. The question on which morphisms in class $\Pc$ have rich fixed
  point is not solved even for the binary case.  Partial results about morphisms generating rich
  words is given in~\cite{glen-justin-widmer-zamboni-ejc-2009}. It is an interesting question to
  distinguish morphisms of classes $\Aj\cap\Pc$, $\Ad\cap\Pc$ such that their fixed points are
  $H$-rich, where $H$ is the group of morphisms and antimorphisms generated by $\Er$ and $\Rr$,
  cf.~\cite{pelantova-starosta-dm-2013}.

\item
  The antimorphism $\Er$ defining antipalindromes in this article acts on the binary alphabet
  $\{0,1\}$. We may think of a generalization to multiletter alphabets $A$. Then one needs to
  consider a group $G$ generated by antimorphisms over the monoid $A^*$ and ask when an infnite word
  contains infinitely many $f$-palindromes for each antimorphims $f\in G$. Recall that an
  $f$-palindrome is a finite word $v\in A^*$ such that $f(v)=v$.
\end{enumerate}


\section*{Acknowledgments}

 \noindent This work was supported by the project CZ.02.1.01/0.0/0.0/16\_019/0000778 from European
 Regional Development Fund. We thank \v St\v ep\'an Starosta for valuable comments.




\end{document}